
\documentclass[preprint,12pt]{elsarticle}






\usepackage{amsmath, bm}
\usepackage{amsthm}
\usepackage{enumitem}
\usepackage{graphicx}
\usepackage{tikz}
\usepackage{natbib}
\usepackage{subcaption}
\usepackage[ruled, linesnumbered]{algorithm2e}
\usepackage[title]{appendix}
\usepackage{epstopdf}
\epstopdfsetup{outdir=./}

\newtheorem{theorem}{Theorem}
\newtheorem{lemma}{Lemma}
\newtheorem{corollary}{Corollary}

\newtheorem{observation}{Observation}

\newtheorem{conjecture}{Conjecture}

\newtheorem{claim}{Claim}

\usetikzlibrary{positioning}

\newcommand{\rev}[1]{\textcolor{black}{#1}}

\theoremstyle{definition}

\newcommand{\D}{\mathcal{D}}

\journal{Advances in Applied Mathematics}

\begin{document}

\begin{frontmatter}



\title{Level-$2$ networks from shortest and longest distances}


\author[inst1]{Katharina T. Huber\fnref{fn1}}
\ead{K.Huber@uea.ac.uk}
\author[inst2]{Leo van Iersel\fnref{fn1}}
\ead{L.J.J.vanIersel@tudelft.nl}
\author[inst2]{Remie Janssen\fnref{fn1}}
\ead{R.Janssen-2@tudelft.nl}
\author[inst2]{Mark Jones\fnref{fn1}}
\ead{M.E.L.Jones@tudelft.nl}
\author[inst1]{Vincent Moulton\fnref{fn1}}
\ead{V.Moulton@uea.ac.uk}
\author[inst2]{Yukihiro Murakami\fnref{fn1}}
\ead{Y.Murakami@tudelft.nl}

\affiliation[inst1]{organization={School of Computing Sciences, University of East Anglia},
            addressline={Norwich Research Park}, 
            city={Norwich},
            postcode={NR4 7TJ}, 
            country={United Kingdom}}

\affiliation[inst2]{organization={Delft Institute of Applied Mathematics, Delft
    University of Technology},
            addressline={Mekelweg 4}, 
            city={Delft},
            postcode={2628 CD}, 
            country={The Netherlands}}


\fntext[fn1]{Research funded in part by the Netherlands Organization for 
Scientific Research (NWO) Vidi grant 639.072.602 and Klein grant 
OCENW.KLEIN.125, and partly by the 4TU Applied Mathematics Institute. Mark 
Jones was also supported by the Netherlands Organisation for Scientific 
Research (NWO) through Gravitation Programme Networks 024.002.003.}


\begin{abstract}
    Recently it was shown that a certain class of phylogenetic networks, called 
	level-$2$ networks, cannot be reconstructed from their associated distance 
	matrices.
	In this paper, we show that they can be reconstructed from their induced 
	shortest and longest distance matrices. That is, if two level-$2$ networks 
	induce the 
	same shortest and longest distance matrices, then they must be isomorphic. 
	We further show that level-$2$ networks are reconstructible from their 
	shortest distance matrices if and only if they do not contain a subgraph 
	from a family of graphs.
	A generator of a network is the graph 
	obtained by 
	deleting all pendant subtrees and suppressing degree-$2$ vertices. We also 
	show that networks with a leaf on every generator side is reconstructible 
	from their induced shortest distance matrix.
\end{abstract}



\begin{keyword}
Phylogenetic networks \sep Reconstructibility \sep Distance matrix \sep Level-k network

\MSC[2020] 05C12 \sep 05C50 \sep 92D15 \sep 92B10
\end{keyword}

\end{frontmatter}



\section{Introduction}

Finding a weighted undirected graph that realizes a distance matrix has 
applications in 
phylogenetics~\citep{huson2010phylogenetic,morrison2011introduction}, 
psychology~\citep{cunningham1978free, schvaneveldt1989network}, electricity 
networks~\citep{hakimi1965distance, forcey2020phylogenetic}, 
information theory~\citep{dewdney1979diagonal}, and other areas. 
In their seminal paper,~\cite{hakimi1965distance} showed that a necessary and 
sufficient condition for a distance matrix to be realizable on a graph is for 
it to be a metric space. While this gave existence for graph realizations on 
any metric spaces, such realizations were not necessarily unique. Many of the 
existing distance methods, including the one from this paper, take the 
following approach. 
Assuming a graph~$G$ realizes some distance matrix~$M$, we first 
identify pendant structures of~$G$ from the information provided by~$M$. The 
elements involved in such structures are clustered into one element in the 
newly updated distance matrix~$M'$. This process is repeated until all 
structures of~$G$ have been identified, at which point we have essentially 
constructed~$G$ (should such a graph exist).


We consider a restriction of the distance matrix realizability problem 
in the context of phylogenetics, where graphs such as phylogenetic networks are 
used to elucidate the evolutionary histories of taxa.
In recent years, phylogenetic networks have attracted increasing attention over 
phylogenetic trees, due to their generalized nature and ability to represent 
non-treelike evolutionary histories; this is suitable in visualising complex 
reticulate events such as hybridization events and introgression, found to be 
rife within plants and 
bacteria~\citep{huson2010phylogenetic,bapteste2013networks}.
In the context of phylogenetics, distances are defined between pairs of taxa to 
denote the number of character 
changes (in terms of the nucleotide bases in DNA) or the evolutionary / genetic 
distance between them.
These distances are generally obtained from multiple sequence alignments.
In this paper, we study the \emph{reconstructibility} of phylogenetic networks 
from certain 
distance matrices. We say that a network is \emph{reconstructible} from its 
induced distance matrix if it is the unique network that realizes the distance 
matrix.

Because networks contain undirected cycles, there can be many paths between two 
leaves (leaves are labelled by unique taxa, so leaves and taxa will be used 
interchangeably). This is in contrast to trees which contain only one path 
between 
each leaf pair. So while a tree induces only one metric, networks may induce 
many\footnote{A matrix consisting of inter-leaf shortest distances, and a 
matrix consisting of inter-leaf longest distances are two examples of a metric 
induced by a network.}.
To date, different matrices have been considered for 
phylogenetic network 
reconstruction. These include the shortest distances (the traditional distance 
matrix),
the sets of distances~\citep{bordewich2016algorithm}, and the
multisets of 
distances~\citep{bordewich2016determining,van2020reconstructibility}, where the 
latter two are not distance matrices in the traditional sense of the term, 
however, 
they contain information on the inter-leaf distances. In particular, an element 
of the multisets of distances is a multiset of all distances between a pair of 
leaves, together with the multiplicities associated to each length. The set of 
distances can be obtained from the multisets of distances by ignoring the 
multiplicities\footnote{The motivation for considering sets and multisets of 
distances is mostly combinatorial. It is not clear how such distances can be 
obtained from a multiple sequence alignment. A possibility is to divide the
alignment into \emph{blocks} depending on the parts of the chromosome 
responsible for encoding a particular gene, or by optimizing constraints such 
as the homoplasy score~\citep{jones2019cutting}. Treating each of these blocks 
as an alignment yields a distance matrix for each block, which can be collated 
to give
multisets of distances between pairs of taxa. The fundamental flaw in this 
technique would be that every multiset of distances between leaf pairs would be 
of the same size (in particular the number of blocks), which is not always the 
case in the results where these multisets are used.}.

Both the sets and multisets of distances were first introduced to prove 
reconstructibility results for distance matrices induced by particular 
phylogenetic 
network classes~\citep{bordewich2016determining, bordewich2016algorithm}; 
shortest distances have also been used to prove reconstructibility 
results~\citep{willson2006unique, 
bryant2007consistency}.
Recent results have shown unique realizability from sets and multisets of 
distances for certain rooted 
networks (tree-child and normal 
networks\footnote{\rev{Tree-child networks are directed networks with the property that every non-leaf vertex has a child that is a tree vertex or a leaf. Normal networks are tree-child networks with the additional constraint that given an edge~$uv$, there cannot be another path from~$u$ to~$v$.}})~\citep{bordewich2018recovering,bordewich2018constructing} and for 
certain
unrooted networks~\citep{van2020reconstructibility}. 
In particular, the results of
\citep{van2020reconstructibility} showed that unweighted binary level-$2$ 
networks are reconstructible from their multisets of distances. Binary 
means that each leaf is of degree-$1$ and every other vertex is of degree-$3$; 
the level of a network refers to the number of edges needed to be removed from 
every biconnected component to obtain a tree (described more in detail below).
They also showed that level-$1$ networks are reconstructible from their 
shortest distances, but that level-$2$ networks were not reconstructible in 
general from their shortest distances (Figure~\ref{fig:level2CE}). 

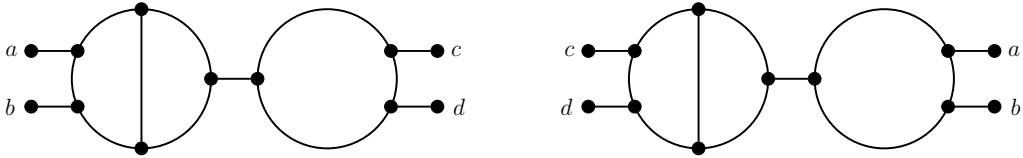
\begin{figure}[ht!]
	\centering
	\resizebox{\columnwidth}{!}{
		\begin{tikzpicture}[every node/.style={draw, circle, fill, inner 
			sep=0pt}]
		\tikzset{edge/.style={very thick}}
		\begin{scope}[xshift=0cm,yshift=0cm]
		\draw[black, very thick] (0,0) circle (1.5);
		\draw[black, very thick] (-4,0) circle (1.5);
		
		\node[] (v2)    at (1.3747,0.6)     {a};
		\node[] (v3)    at (1.3747,-0.6)    {a};
		\node[] (v4)    at (-1.5,0)         {a};
		\node[] (u1)    at (-2.5,0)         {a};
		\node[] (u2)    at (-4,1.5)         {a};
		\node[] (u3)    at (-4,-1.5)        {a};
		\node[] (u4)    at (-5.3747,0.6)    {a};
		\node[] (u5)    at (-5.3747,-0.6)   {a};
		
		\node[] (a)     at (-6.3747,0.6)    {a};
		\node[] (b)     at (-6.3747,-0.6)   {a};
		\node[] (c)     at (2.3747,0.6)     {a};
		\node[] (d)     at (2.3747,-0.6)    {a};
		
		\node[draw = none, fill = none, left=1mm of a]  (la) {\large $a$};
		\node[draw = none, fill = none, left=1mm of b]  (lb) {\large $b$};
		\node[draw = none, fill = none, right=1mm of c] (lc) {\large $c$};
		\node[draw = none, fill = none, right=1mm of d] (ld) {\large $d$};
		
		\draw[edge] (u2) -- (u3);
		\draw[edge] (u1) -- (v4);
		\draw[edge] (v2) -- (c);
		\draw[edge] (v3) -- (d);
		\draw[edge] (u4) -- (a);
		\draw[edge] (u5) -- (b);
		\end{scope}
		
		\begin{scope}[xshift=12cm,yshift=0cm]
		\draw[black, very thick] (0,0) circle (1.5);
		\draw[black, very thick] (-4,0) circle (1.5);
		
		\node[] (v2)    at (1.3747,0.6)     {a};
		\node[] (v3)    at (1.3747,-0.6)    {a};
		\node[] (v4)    at (-1.5,0)         {a};
		\node[] (u1)    at (-2.5,0)         {a};
		\node[] (u2)    at (-4,1.5)         {a};
		\node[] (u3)    at (-4,-1.5)        {a};
		\node[] (u4)    at (-5.3747,0.6)    {a};
		\node[] (u5)    at (-5.3747,-0.6)   {a};
		
		\node[] (a)     at (-6.3747,0.6)    {a};
		\node[] (b)     at (-6.3747,-0.6)   {a};
		\node[] (c)     at (2.3747,0.6)     {a};
		\node[] (d)     at (2.3747,-0.6)    {a};
		
		\node[draw = none, fill = none, left=1mm of a]  (la) {\large $c$};
		\node[draw = none, fill = none, left=1mm of b]  (lb) {\large $d$};
		\node[draw = none, fill = none, right=1mm of c] (lc) {\large $a$};
		\node[draw = none, fill = none, right=1mm of d] (ld) {\large $b$};
		
		\draw[edge] (u2) -- (u3);
		\draw[edge] (u1) -- (v4);
		\draw[edge] (v2) -- (c);
		\draw[edge] (v3) -- (d);
		\draw[edge] (u4) -- (a);
		\draw[edge] (u5) -- (b);
		\end{scope}
		
		\end{tikzpicture}}
	\caption{Two level-$2$ networks with the same shortest distances between 
		any pair of leaves. The shortest distance can be worked out by taking 
		the length of the shortest path between a pair of leaves, where each 
		edge is of length~$1$.}
	\label{fig:level2CE}
\end{figure}

It is interesting to know which level-$2$ networks are reconstructible from 
their shortest distances, or what additional information is needed to be able 
to do so.
Therefore, motivated by the results 
in~\citep{van2020reconstructibility}, we answer three open problems \rev{for binary networks}
from the paper on unique realizability of certain distance matrices.

%

\begin{enumerate}
	\item Networks with a leaf on every generator side are reconstructible from 
	their induced shortest distance matrix (Theorem~\ref{thm:GenSide}); 
	\item Level-$2$ networks are reconstructible from their induced shortest 
	and longest distance matrix \rev{(sl-distance matrix)} (Theorem~\ref{thm:L2SL});
	\item We characterize subgraphs of level-$2$ networks 
	that are responsible for the class to not be reconstructible from their 
	induced shortest distance matrix (Theorem~\ref{thm:NoAltPath}).
\end{enumerate}

\paragraph{Structure of the paper}
In Section~\ref{sec:Preliminaries}, we give formal definitions of phylogenetic 
terms.
In Section~\ref{sec:GeneratorSide}, we show that networks with a 
leaf on every generator side (i.e., every vertex on the network is at most 
shortest distance-2 away from a leaf) are reconstructible from their shortest 
distances 
(Theorem~\ref{thm:GenSide}). In Section~\ref{sec:SLDistance}, we show that 
level-$2$ networks are reconstructible from their sl-distance matrices 
(Theorem~\ref{thm:L2SL}). This is proven by first showing that the splits of 
the network (cut-edges that induce a partition on the labelled leaves) are 
determined by the shortest distances that they realize 
(Theorem~\ref{thm:CutEdgeSplit}). In Section~\ref{sec:Forbidden}, we show a 
construction for obtaining pairs of distinct level-$2$ networks from a binary 
tree that realize the same shortest distance matrix. We show that having such a 
network as a subgraph renders a level-$2$ network to be non-reconstructible 
from their shortest distances, thereby characterizing the family of subgraphs 
that are responsible for the non-reconstructibility 
(Theorem~\ref{thm:NoAltPath}). We close with a discussion in 
Section~\ref{sec:Discussion}, presenting ideas for possible future directions.



\section{Preliminaries}\label{sec:Preliminaries}
An \emph{(unrooted binary phylogenetic) network} on~$X$ (where~$|X|\ge2$) is a 
simple connected 
undirected graph with at least two leaves where the leaves are labelled 
bijectively by~$X$ and are of degree-$1$. All internal vertices are of 
degree-$3$. An \emph{(unrooted binary phylogenetic) tree} on~$X$ is a network 
with no cycles.

\subsection{Graph Theoretic Definitions}

Let~$N$ be a network. A set of two leaves~$\{x,y\}$ of~$N$ forms a 
\emph{cherry} if they share 
a common neighbor. Let~$a=(a_1,\ldots,a_k)$ be an ordered sequence of~$k$ 
leaves, and let~$p_a = (v_1,\ldots,v_k)$, where~$v_i$ is the neighbor of~$a_i$ 
for each~$i\in [k]=\{1,\ldots, k\}$\footnote{For consistency later on 
the section, we let~$[0]=\emptyset$, the empty set.}. We allow for~$v_1=v_2$ and~$v_{k-1} = 
v_k$. If~$p_a$ is a path in~$N$ then~$a$ is called a \emph{chain} of 
length~$k\ge0$. 
\rev{We call chains of length~$0$ an empty chain. Assume that all chains are non-empty unless stated otherwise. Letting chains be of empty length is to generalize some statements later on in the paper.}
We say that~$a$ is a \emph{maximal chain} if~$a$ does not form a 
subsequence for some other chain. We assume all chains to be maximal, unless 
stated otherwise. If~$a$ is a chain, then the vertices of~$p_a$ 
are called the \emph{spine vertices}, and the path~$p_a$ is called a 
\emph{spine}. The vertices~$a_1,a_k$ are called the \emph{end-leaves} of the 
chain~$a$, and the vertices~$v_1,v_k$ are called the \emph{end-spine vertices} 
of the chain.
\rev{For brevity, given a set~$S$, we shall write~$S\cup a$ to denote the set~$S\cup \{a_1,a_2,\ldots, a_k\}$.}

A \emph{blob} of a network is a maximal 2-connected subgraph with at least 
three vertices. A network is a \emph{level-$k$ network}, with~$k\ge0$, if at 
most~$k$ edges must be deleted from every blob to obtain a tree.
\rev{We denote an edge between~$u$ and~$v$ by~$uv$.}
We call a 
cut-edge 
\emph{trivial} if the edge is incident to a leaf, and 
\emph{non-trivial} otherwise. Given a cut-edge~$uv$ we say that a leaf~$x$ can 
be \emph{reached from}~$u$ (via~$uv$) if, upon deleting the edge~$uv$ without 
suppressing degree-$2$ vertices,~$x$ is in the same component as~$v$ in the 
resulting subgraph. We say that a leaf is 
\emph{contained} in a blob if the neighbor of the leaf 
is a vertex of the blob. We say that a chain is \emph{contained} in a blob if 
any of the leaves of the chain are contained in the blob (and therefore all 
leaves of the chain are contained in the blob).
An edge is 
\emph{incident} to a blob if exactly one of the endpoints of the edge is a 
vertex of the blob. A blob is \emph{pendant} if there is exactly one 
non-trivial cut-edge that is incident to the blob. We say that a leaf~$x$ can 
be \emph{reached} from a blob~$B$ via a cut-edge~$uv$ if~$u$ is a vertex of~$B$ 
and~$x$ can be reached from~$u$ via~$uv$. In this case, we also say that~$uv$ 
or~$u$ \emph{separates}~$x$ from~$B$.

Letting~$X$ be a set of taxa, a \emph{split} on~$X$ is a partition~$\{A,B\}$ 
of~$X$. We denote a split which induces the partition~$\{A,B\}$ of~$X$ by~$A|B$ 
where the order in which we list~$A$ and~$B$ does not matter. 
Observe that some cut-edges of a network on~$X$ naturally induce a split as 
there are exactly two parts of the network separated by the edge. We call this 
a \emph{cut-edge induced split}. We call a split~$A|B$ \emph{non-trivial} if 
both~$A$ and~$B$ contain at least two elements. Otherwise we call a split 
\emph{trivial}. Observe that non-trivial cut-edges induce non-trivial splits, 
and that trivial cut-edges induce trivial splits.

In this paper, we assume the restriction that every cut-edge must induce a 
unique split. Firstly, such a restriction eliminates the possibility for 
networks to contain~\emph{redundant} blobs, which are pendant blobs that 
contain no leaves. Secondly, the restriction removes all blobs that do not 
contain leaves, that are incident only to two non-trivial cut-edges. Such blobs 
can be interpreted as higher-level analogues of parallel edges.

The \emph{generator}~$G(N)$ of a network~$N$ is the multi-graph obtained by 
deleting all pendant subtrees (i.e., deleting all leaves from~$N$) and 
suppressing degree-$2$ vertices. The generator may contain loops and parallel 
edges. 
A vertex of~$N$ that is not deleted or suppressed in the process of 
obtaining~$G(N)$ is called a \emph{generator vertex}.
We call the edges 
of~$G(N)$ the \emph{sides} of~$N$. Observe that the sides of~$N$ correspond to 
paths of~$N$. Let~$s$ be a side of~$N$, and 
let~$e_0v_1v_2\cdots v_ke_1$ with~$k\ge0$ denote the path in~$N$ corresponding 
to~$s$, 
where~$e_0$ and~$e_1$ are vertices of the generator. If~$k=0$, then the path is 
simply the edge~$e_0e_1$. We call~$e_0$ and~$e_1$ 
the \emph{boundary vertices of side~$s$}. We say that a leaf~$x$ \emph{is on} 
side~$s$ if~$x$ is a neighbor of~$v_i$ for some~$i\in[k]$. We say that a 
chain \emph{is on} side~$s$ if all leaves of the chain are on the side~$s$. 
Observe that a leaf of a chain is on a side if and only if the chain is on 
the side. Observe also that~$v_1\cdots v_k$ is a spine of some chain on~$s$.
We say that a side is \emph{empty} if no leaves are on the side. A 
\emph{side of a blob~$B$} is an edge of~$G(N)$ which corresponds to 
a path in~$B$. Observe that level-$2$ blobs contain exactly two vertices that 
are not cut-vertices. We call these the \emph{poles} of the blob. There are 
exactly three edge-disjoint paths between the two poles. We call these three paths 
in~$N$ the \emph{main paths} of~$B$. The vertices in a main path~$s$ of~$B$ 
that are adjacent to the endpoints of~$s$ are called the \emph{main end-spine 
	vertices}. 

We adopt the following notation for pendant level-$2$ blobs 
from~\citep{van2020reconstructibility}. Let~$B$ be a pendant level-$2$ blob, 
and let~$a,b,c,d$ denote the four chains contained in~$B$ of 
lengths~$k,\ell,m,n \geq0$, respectively, such that chains~$c$ and~$d$ are on 
the same main path of~$B$ as the non-trivial cut-edge. Then we say that~$B$ is 
of 
the form~$(a,b,c,d)$ (see Figure~\ref{fig:PBlobs}). The order of the first two 
elements~$a,b$, and the 
order of the last two elements~$c,d$ do not matter. For ease of 
notation, a side without leaves is seen as a length-0 chain. Note that since 
every cut-edge induces a unique split, it is not possible to obtain the pendant 
blob of the form~$(1,0,0,0)$. 

\begin{figure}
	\centering
	\includegraphics[width=\textwidth]{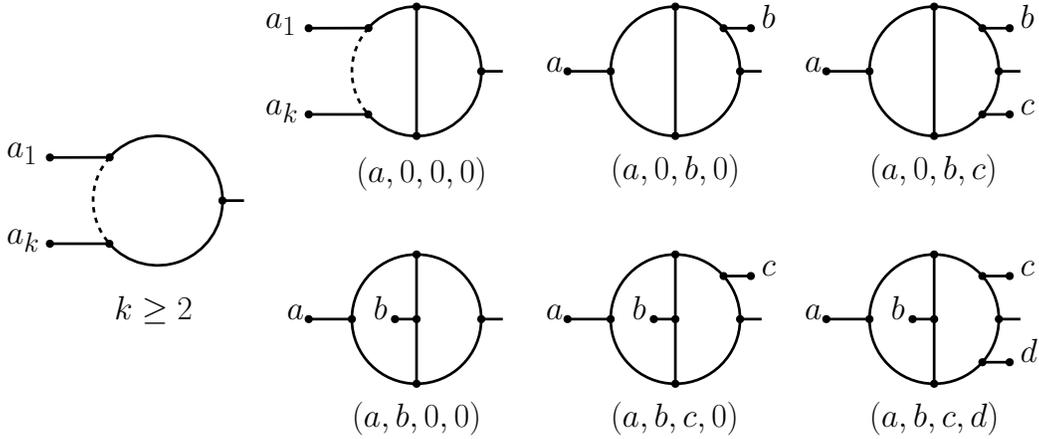}
	\caption{The seven possible pendant blobs in a level-$2$ network. The 
		leftmost pendant blob is level-$1$, and the other pendant blobs are all 
		level-$2$. Observe that for the pendant level-$1$ blob and the pendant 
		level-$2$ blob of the form~$(a,0,0,0)$, the length of the chain~$a$ 
		must be 
		of length at least~$2$. This is due to the fact that networks do not 
		contain parallel edges nor level-$2$ blobs with only two cut-edges 
		incident 
		to it. The dashed edges in these two blobs indicate that the chain can 
		be 
		longer than~$2$. For the five other pendant blobs, each of the 
		leaves~$a,b,c,d$ indicates a chain.
	}
	\label{fig:PBlobs}
\end{figure}
 
\begin{figure}[ht!]
	\centering
	\begin{minipage}[c]{.49\textwidth}
		\resizebox{\textwidth}{!}{
			\begin{tikzpicture}[every node/.style={draw, circle, fill, inner 
			sep=0pt}]
			\tikzset{edge/.style={very thick}}
			
			\draw[black, very thick, dashed, domain=0:90] plot 
			({1.5*cos(\x)},{1.5*sin(\x)});
			\draw[black, very thick, domain=90:360] plot 
			({1.5*cos(\x)},{1.5*sin(\x)});
			\draw[black, very thick] (4,0) circle (1.5);
			
			\node[] (v0)    at (0,0)            {a};
			\node[] (v1)    at (-1.5,0)    {a};
			\node[] (v2)    at (1.5,0)          {a};
			\node[] (cut) at (2,0) {a};
			\node[] (cut1) at (2,-0.75) {a}; 
			\node[] (f) at (1.5,-1.5) {a}
			node[draw=none,fill=none,below=1mm of f] {\large $f$};
			\node[] (g) at (2.5,-1.5) {a}
			node[draw=none,fill=none,below=1mm of g] {\large $g$};
			\node[] (v3)    at (2.5,0)          {a};
			\node[] (v4)    at (0,1.5)          {a}
			node[draw=none,fill=none,above=1mm of v4] {\large $p_1$};
			\node[] (v5)    at (0,-1.5)         {a}
			node[draw=none,fill=none,below=1mm of v5] {\large $p_2$};
			\node[] (a)     at (-2.25,0)         {a};
			\node[] (c)     at (-0.75,0)        {a};
			\node[] (d1) at ({4 + 1.5*cos(30)},{1.5*sin(30)}) {a};
			\node[] (d2) at ({4 + 1.5*cos(-30)},{1.5*sin(-30)}) {a};
			\node[] (c1) at ({1.5*cos(60)},{1.5*sin(60)}) {a};
			\node[] (c2) at ({1.5*cos(30)},{1.5*sin(30)}) {a};

			\node[] (lc1) at ({0.5 + 1.5*cos(60)},{1.5*sin(60)}) {a}
			node[draw=none,fill=none,right=1mm of lc1] {\large $c_1$};
			\node[] (lc2) at ({0.5 + 1.5*cos(30)},{1.5*sin(30)}) {a}
			node[draw=none,fill=none,right=1mm of lc2] {\large $c_2$};
			\node[] (ld1) at ({5 + 1.5*cos(30)},{1.5*sin(30)}) {a}
			node[draw=none,fill=none,right=1mm of ld1] {\large $d_1$};
			\node[] (ld2) at ({5 + 1.5*cos(-30)},{1.5*sin(-30)}) {a}
			node[draw=none,fill=none,right=1mm of ld2] {\large $d_2$};
			\node[draw = none, fill = none, left=1mm of a] (la) {\large $a$};
			\node[draw = none, fill = none, below=1mm of c] (lc) {\large $b$};
			
			\draw[edge] (v0) -- (c)
						(v1) -- (a)
						(v2) -- (v3)
						(cut1) -- (f)
						(cut1) -- (g)
						(c1) -- (lc1)
						(c2) -- (lc2)
						(d1) -- (ld1)
						(d2) -- (ld2)
						(v4) -- (v5);
			\draw[edge,dotted] (cut) -- (cut1)
			node[draw=none,fill=none,midway, right=1mm] {\large $e$};
			
			\node[draw=none,fill=none] at (-1.5,1.5) {\LARGE $N$};
			
			\end{tikzpicture}
		}
	\end{minipage}\hfill
	\begin{minipage}[c]{.49\textwidth}
		\resizebox{0.8\textwidth}{!}{
			\begin{tikzpicture}[every node/.style={draw, circle, fill, inner 
				sep=0pt}]
			\tikzset{edge/.style={very thick}}
			
			\draw[black, very thick] (0,0) circle (1.5);
			\draw[black, very thick] (4,0) circle (1.5);
			
			\node[] (v2)    at (1.5,0)          {a};
			\node[] (v3)    at (2.5,0)          {a};
			\node[] (v4)    at (0,1.5)          {a};
			\node[] (v5)    at (0,-1.5)         {a};
			
			\draw[edge] 
			(v2) -- (v3)
			(v4) -- (v5);
			
			\node[draw=none,fill=none] at (-1.6,1.5) {\LARGE $G(N)$};
			\end{tikzpicture}
		}
	\end{minipage}\hfill
	\vspace{1cm}
		\centering
		\small
		\resizebox{\columnwidth}{!}{
			\begin{tabular}{c|c|c|c|c|c|c|c|c}
				& $a$ & $b$     & $c_1$   & $c_2$   & $d_1$    & $d_2$    & 
				$f$        & $g$        \\ \hline
				$a$   &     & $(4,8)$ & $(4,8)$ & $(5,7)$ & $(7,12)$ & $(7,12)$ 
				& $(6,10)$ & $(6,10)$ \\ \hline
				$b$   &     &         & $(4,8)$ & $(5,7)$ & $(7,12)$ & $(7,12)$ 
				& $(6,10)$ & $(6,10)$ \\ \hline
				$c_1$ &     &         &         & $(3,7)$ & $(7,10)$ & $(7,10)$ 
				& $(6,8)$  & $(6,8)$  \\ \hline
				$c_2$ &     &         &         &         & $(6,11)$ & $(6,11)$ 
				& $(5,9)$  & $(5,9)$  \\ \hline
				$d_1$ &     &         &         &         &          & $(3,4)$  
				& $(5,6)$  & $(5,6)$  \\ \hline
				$d_2$ &     &         &         &         &          &          
				& $(5,6)$  & $(5,6)$  \\ \hline
				$f$   &     &         &         &         &          &          
				&          & $(2,2)$  \\ \hline
				$g$   &     &         &         &         &          &          
				&          &         
			\end{tabular}
		}

	\caption{A level-$2$ network~$N$ on the taxa 
	set~$\{a,b,c_1,c_2,d_1,d_2,f,g\}$, its generator~$G(N)$, and its 
	sl-distance matrix. $N$ contains a cherry~$\{f,g\}$ and four chains~$(a), 
	(b), (c_1,c_2)$ and~$(d_1,d_2)$. $N$ contains two pendant blobs: the 
	leftmost is 
	a level-$2$ blob of the form~$((a),(b),(c_1,c_2),\emptyset)$, and the 
	rightmost is a level-$1$ blob containing the leaves~$d_1$ and~$d_2$. The 
	poles of the pendant level-$2$ blob are labelled by~$p_1$ and~$p_2$. The 
	dotted
	cut-edge~$e$ induces the non-trivial 
	split~$\{a,b,c_1,c_2,d_1,d_2\}|\{f,g\}$. The blob side indicated by the 
	dashed path contains the 
	chain~$(c_1,c_2)$. The chains~$(a)$ and~$(c_1,c_2)$ are adjacent once. The 
	chains~$(a)$ and~$(b)$ are adjacent twice.
	The sl-matrix has~$ij$-th elements of the form~$(x,y)$, where~$x$ and~$y$ 
	denote the shortest and longest distances between~$i$ and~$j$ in~$N$. The 
	diagonal elements, which are all~$(0,0)$, and the lower triangular elements 
	are omitted as the matrix is symmetric.}
	\label{fig:Example}
\end{figure}



\subsection{Distances}

For a network~$N$ on~$X$, we let~$d^N_m(x,y)$ and~$d^N_l(x,y)$ denote the 
length of a shortest and a longest path between two vertices~$x,y$ in~$N$, 
respectively. We exclude the superscript~$N$ when there is no 
ambiguity on the network at hand. Let~$a=\{a_1,\ldots,a_k\}$ be a set of 
vertices in~$N$, and let~$u$ be a vertex in~$N$ that is not in~$a$. Then we 
define the shortest distance from~$u$ to~$a$ as the shortest distance from~$u$ 
to any of the vertices in~$a$, that is,~$d_m^N(a,u) = 
\min\{d_m(a_i,u):i\in[k]=\{1,\ldots, k\}\}$. Similarly, 
define the longest distance from~$u$ to~$a$ as the longest distance from~$u$ to 
any of the vertices in~$a$, that is,~$d_l^N(a,u) = \max\{d_l(a_i,u): i\in[k]\}$.

The \emph{shortest distance matrix}~$\D_m(N)$ of~$N$
is the~$|X|\times |X|$ matrix, \rev{where the rows and columns are indexed by the leaves of the network,} whose~$(x,y)$-th entry is~$d^N_m(x,y)$. A 
network~$N$ \emph{realizes} the shortest distance matrix~$\D_m$ 
if~$\D_m(N) = \D_m$. We say that a network~$N$ is 
\emph{reconstructible} from its shortest distance matrix if~$N$ is the only 
network, up to isomorphism,
that realizes~$\D_m(N)$.
Here, we say that two networks~$N$ and~$N'$ on~$X$ are \emph{isomorphic} if 
there exists a bijection~$f$ from the vertices of~$N$ to the vertices of~$N'$, 
such that~$uv$ is an edge of~$N$ if and only if~$f(u)f(v)$ is an edge of~$N'$, 
and the leaves of~$N$ are mapped to leaves of~$N'$ of the same label.
Similarly, we define the \emph{sl-distance matrix (shortest longest - distance 
matrix)}~$\D(N)$ as 
the~$|X|\times |X|$ matrix, \rev{where the rows and columns are indexed by the leaves of the network,} whose~$(x,y)$-th entry is~$d^N(x,y) = \{d^N_m(x,y), 
d^N_l(x,y)\}$. We say that a network~$N$ realizes the sl-distance 
matrix~$\D$ if~$\D(N) = \D$. A network~$N$ is 
\emph{reconstructible} from its sl-distance matrix if~$N$ is the only network, 
up to isomorphism, 
that realizes~$\D(N)$. 


\subsection{Reducing Cherries}\label{subsec:Che}

By definition, we may identify cherries from shortest distance matrices.

\begin{observation}
	Let~$\D_m$ be a shortest distance matrix. A network~$N$ on~$X$ 
	that realizes~$\D_m$ contains a cherry~$\{x,y\}$ if and only if~$d_m(x,y) = 
	2$.
\end{observation}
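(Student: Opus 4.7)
The statement is a direct consequence of the definitions of \emph{cherry}, \emph{leaf}, and \emph{shortest path}, so the plan is simply to verify both implications by unpacking these definitions; no combinatorial machinery is needed.

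For the forward direction, the plan is to assume $\{x,y\}$ is a cherry of $N$, so that by definition there exists a vertex $v$ adjacent to both $x$ and $y$. The path $x\,v\,y$ witnesses that $d_m^N(x,y)\le 2$. To complete the argument I would rule out $d_m^N(x,y)=1$: if $x$ and $y$ were adjacent, then $x$ would be incident to both the edge $xy$ and the edge $xv$ (with $v\neq y$ since otherwise the graph would contain a multi-edge, contradicting simplicity), giving $x$ degree at least $2$ and contradicting the fact that leaves have degree $1$. Since $d_m^N(x,y)\ge 1$ always holds for distinct vertices, this forces $d_m^N(x,y)=2$.

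For the converse, I would assume $d_m^N(x,y)=2$. Then any realizing shortest path is of the form $x\,v\,y$ for some vertex $v$, which is by construction a common neighbor of $x$ and $y$. By definition this means $\{x,y\}$ forms a cherry, finishing the proof.

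There is essentially no obstacle here beyond being careful about the edge case where $x$ and $y$ might be adjacent; the only mildly non-automatic point is observing that the degree-$1$ condition on leaves prevents $d_m^N(x,y)=1$ from coexisting with a shared neighbor, which is exactly where the hypothesis that $N$ is a binary phylogenetic network (leaves of degree $1$) is used.
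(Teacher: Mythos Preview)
Your proposal is correct. The paper does not give a proof of this observation at all; it simply states it after remarking ``By definition, we may identify cherries from shortest distance matrices.'' Your argument unpacks exactly the definitions the paper is implicitly relying on, so there is nothing to compare.

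One tiny cosmetic point: in the forward direction, the reason $v\neq y$ is that $v$ is adjacent to $y$, and networks are simple (no loops), not that a multi-edge would arise. Alternatively, and perhaps more cleanly, you can argue that since $x$ is a leaf its unique neighbor is $v$; hence $x$ is adjacent to $y$ only if $y=v$, which is impossible as just noted. This avoids any mention of multi-edges.
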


\emph{Reducing} a cherry~$\{x,y\}$ to a leaf~$z$ from~$N$ is the action of 
deleting both leaves~$x,y$ and labelling the remaining unlabelled degree-$1$ 
vertex as~$z$, assuming that~$z\notin X$ (this vertex was the neighbor of~$x$ 
and~$y$ in~$N$). As a result of reducing the cherry~$\{x,y\}$, observe that the 
shortest distance between two leaves that are both not~$z$ are unchanged; the 
shortest distance between~$z$ and another leaf~$l\in X -\{x,y\}$ is exactly one 
less than that of~$x$ and~$l$ in~$N$. 

\begin{observation}\label{obs:CherryReduction}
	Let~$N$ be a network on~$X$ containing a cherry~$\{x,y\}$. Upon reducing 
	the cherry to a leaf~$z$, we obtain a network~$N'$ 
	on~$X'=X\cup\{z\}-\{x,y\}$ such that 
	the shortest distance matrix for~$N'$ contains the elements
	\[d^{N'}_m(a,b) =
	\begin{cases}
		d^N_m(a,b) &\text{ if } a,b\in X-\{x,y\}\\
		d^N_m(a,x)-1 &\text{ if } a\in X-\{x,y\} \text{ and } b = z.
	\end{cases}
	\]
\end{observation}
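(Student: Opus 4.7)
The plan is a short case analysis based on the fact that leaves are degree-$1$ vertices and hence never serve as internal vertices of any shortest path between two other leaves. Let $v$ denote the common neighbor of $x$ and $y$ in $N$, so that after deleting the edges $vx$ and $vy$ (and the vertices $x,y$) the vertex $v$ becomes a degree-$1$ vertex which is relabelled $z$ to produce $N'$.

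First I would handle the case $a,b \in X - \{x,y\}$. Any shortest path $P$ in $N$ between $a$ and $b$ cannot pass through $x$ or $y$: since $x$ and $y$ are leaves and hence of degree~$1$, the only way either could appear on $P$ would be as an endpoint, contradicting $a,b\notin\{x,y\}$. Thus $P$ uses only edges and vertices that remain in $N'$, giving $d^{N'}_m(a,b)\le d^N_m(a,b)$. Conversely, every $a$-to-$b$ path in $N'$ is also a path in $N$ (since $N'$ is a subgraph of $N$ up to the relabelling of $v$), so $d^N_m(a,b)\le d^{N'}_m(a,b)$, yielding equality.

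Next I would treat the case $a\in X-\{x,y\}$, $b = z$. Here I would show that a shortest $a$-to-$x$ path in $N$ must have $v$ as its second-to-last vertex. Since $x$ has unique neighbor $v$, every $a$-to-$x$ path in $N$ is obtained by appending the edge $vx$ to some $a$-to-$v$ path, so $d^N_m(a,x) = d^N_m(a,v) + 1$. The same argument using the degree-$1$ property, as in the first case, shows that shortest $a$-to-$v$ paths in $N$ are exactly the shortest $a$-to-$z$ paths in $N'$, so $d^{N'}_m(a,z) = d^N_m(a,v) = d^N_m(a,x) - 1$.

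Since the statement is an observation rather than a deep result, there is no real obstacle; the only point requiring care is to make explicit the standard fact that shortest paths between non-$\{x,y\}$ leaves avoid $x$ and $y$, which I would state once and reuse in both cases.
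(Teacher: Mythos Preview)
Your proof is correct and follows the same reasoning as the paper, which in fact does not prove this observation at all but simply states the intuition in one sentence (that distances between non-$\{x,y\}$ leaves are unchanged and distances to $z$ drop by one). Your write-up is just a careful unpacking of that intuition, using exactly the expected fact that degree-$1$ vertices cannot be interior to any path.
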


In the setting of Observation~\ref{obs:CherryReduction}, one may obtain a 
network that is isomorphic to~$N$ from~$N'$ by adding two labelled vertices~$x$ 
and~$y$, adding the edges~$zx$ and~$zy$, and unlabelling the vertex~$z$. We 
call this \emph{replacing~$z$ by a cherry~$\{x,y\}$}. 

\begin{observation}\label{obs:AddCherry}
	Let~$N$ be a network on~$X$, and let~$z$ be a leaf in~$N$. Let~$x,y\notin 
	X$ be leaf labels that do not appear in~$N$. Then upon replacing~$z$ by a 
	cherry~$\{x,y\}$, we obtain a network~$M$ on~$Y=X\cup\{x,y\}-\{z\}$ that 
	realizes the shortest distance matrix with entries
	\[d^{M}_m(a,b) =
	\begin{cases}
		d^N_m(a,b) 		&\text{ if } a,b\in Y-\{x,y\}\\
		d^N_m(a,z)+1 	&\text{ if } a\in Y-\{x,y\} \text{ and } b\in\{x,y\}\\
		2				&\text{ if } a=x\text{ and }b=y.
	\end{cases}
	\]
\end{observation}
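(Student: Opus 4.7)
The plan is to verify the three cases of the distance formula directly by analyzing how paths between leaves change under the cherry-replacement operation. The key structural observation is that replacing the leaf~$z$ of~$N$ by a cherry~$\{x,y\}$ leaves the vertex formerly labelled~$z$ in place as an internal vertex of degree~$3$, and simply attaches two new pendant edges~$zx$ and~$zy$. Hence~$M$ contains a copy of~$N$ (with~$z$ now an internal vertex rather than a leaf) as a subgraph, together with these two pendant edges.

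For~$a,b\in Y\setminus\{x,y\}$, I would argue that no path between~$a$ and~$b$ in~$M$ can visit~$x$ or~$y$, since those are degree-$1$ vertices that are neither endpoint of the path; walking onto~$x$ or~$y$ would require immediately backtracking. Therefore every path between~$a$ and~$b$ in~$M$ uses only edges of the subgraph isomorphic to~$N$, and conversely every such path in~$N$ remains valid in~$M$. This yields~$d^M_m(a,b)=d^N_m(a,b)$. For~$a\in Y\setminus\{x,y\}$ and~$b=x$ (the case~$b=y$ is symmetric), any path from~$a$ to~$x$ in~$M$ must end with the unique edge incident to~$x$, namely~$zx$; hence~$d^M_m(a,x)=d^M_m(a,z)+1$. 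By the same reasoning as in the previous case, the shortest distance from~$a$ to the vertex~$z$ in~$M$ equals the shortest distance from~$a$ to the leaf~$z$ in~$N$, so~$d^M_m(a,x)=d^N_m(a,z)+1$. Finally, for~$a=x$ and~$b=y$, the shortest path between the two pendant vertices of the cherry passes through their common (and unique) neighbor and therefore has length exactly~$2$.

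The verification involves essentially no obstacle: since the operation is precisely the inverse of the cherry reduction described in Observation~\ref{obs:CherryReduction}, and no shortcut can be introduced by appending pendant edges, the formula follows by tracking each path carefully. The only subtlety worth flagging is checking that the resulting graph~$M$ is indeed a valid unrooted binary phylogenetic network on~$Y$, which requires~$x,y\notin X$ (given in the hypothesis) and the fact that~$z$ has degree~$1$ in~$N$, so that the vertex formerly labelled~$z$ acquires degree~$3$ in~$M$ as required by binarity.
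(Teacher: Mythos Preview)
Your proposal is correct. The paper states this as an observation without proof, so there is no argument to compare against; your direct verification by tracking how paths change under the pendant-edge attachment is exactly the intended justification.
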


It is easy to see that replacing a leaf by a cherry and reducing a cherry are 
inverse operations of one another.

\begin{lemma}\label{lem:CherryReductionRecon}
	Let~$N$ be a network with a cherry~$\{x,y\}$, and let~$N'$ denote the 
	network obtained by reducing the cherry from~$N$ to a leaf~$z$. Then $N$ is 
	reconstructible from its shortest distance matrix if and only if~$N'$ is 
	reconstructible from its shortest distance matrix.
\end{lemma}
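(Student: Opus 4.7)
The plan is to exploit the bijective correspondence between cherry-reduction (Observation~\ref{obs:CherryReduction}) and cherry-replacement (Observation~\ref{obs:AddCherry}), together with the fact that cherries are detectable from the shortest distance matrix. Throughout, I fix the same label $z \notin X$ for the reduction in both directions.

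For the forward direction, assume $N$ is reconstructible from $\mathcal{D}_m(N)$, and let $M'$ be any network on $X' = (X \setminus \{x,y\}) \cup \{z\}$ realizing $\mathcal{D}_m(N')$. Since $z$ is a leaf of $M'$, I replace $z$ by a cherry $\{x,y\}$ in $M'$ to obtain a network $M$ on $X$. By Observation~\ref{obs:AddCherry}, the entries of $\mathcal{D}_m(M)$ are determined from those of $\mathcal{D}_m(M') = \mathcal{D}_m(N')$ by the same formula that Observation~\ref{obs:AddCherry} gives when applied to $N'$ to produce $N$; hence $\mathcal{D}_m(M) = \mathcal{D}_m(N)$. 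By hypothesis $M \cong N$, and since reducing the cherry $\{x,y\}$ in $M$ (respectively $N$) recovers $M'$ (respectively $N'$) uniquely, I conclude $M' \cong N'$.

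For the reverse direction, assume $N'$ is reconstructible from $\mathcal{D}_m(N')$, and let $M$ be any network on $X$ realizing $\mathcal{D}_m(N)$. Since $d^M_m(x,y) = d^N_m(x,y) = 2$, the observation preceding Observation~\ref{obs:CherryReduction} guarantees that $M$ contains the cherry $\{x,y\}$. Reduce this cherry to a leaf labelled $z$ to obtain a network $M'$ on $X'$. By Observation~\ref{obs:CherryReduction}, the entries of $\mathcal{D}_m(M')$ are obtained from those of $\mathcal{D}_m(M) = \mathcal{D}_m(N)$ by exactly the same rule as that which produces $\mathcal{D}_m(N')$ from $\mathcal{D}_m(N)$; thus $\mathcal{D}_m(M') = \mathcal{D}_m(N')$. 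By hypothesis $M' \cong N'$, and replacing $z$ by the cherry $\{x,y\}$ on both sides (which is the inverse of the reduction that produced them) yields $M \cong N$.

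No step appears to be a genuine obstacle. The only point worth double-checking is that in the forward direction the label $z$ truly is a leaf of the competitor $M'$ (which is immediate because $z \in X'$ and $M'$ is a phylogenetic network on $X'$), and that in the reverse direction the cherry detected in $M$ uses exactly the labels $x$ and $y$ rather than some other pair at distance~$2$ (which is again immediate from the equality $\mathcal{D}_m(M) = \mathcal{D}_m(N)$ together with the cherry-detection observation). Everything else is routine bookkeeping of how Observations~\ref{obs:CherryReduction} and~\ref{obs:AddCherry} compose into mutual inverses at the level of shortest distance matrices.
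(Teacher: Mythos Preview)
Your proof is correct and follows essentially the same approach as the paper: both directions rely on the mutual-inverse relationship between cherry reduction (Observation~\ref{obs:CherryReduction}) and cherry replacement (Observation~\ref{obs:AddCherry}), together with the fact that a cherry is detectable from the shortest distance matrix. The only cosmetic difference is that the paper phrases each direction as a proof by contradiction while you argue directly, but the underlying logic is identical.
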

\begin{proof}
	Suppose first that the network~$N$ is reconstructible from its shortest 
	distance matrix. Suppose for a contradiction that the shortest distance 
	matrix~$\D_m(N')$ of~$N'$ is also realized by a network~$N''$ that is not 
	isomorphic to~$N'$. 
	Consider the networks~$M'$ and~$M''$ obtained from~$N'$ and~$N''$, 
	respectively, by replacing~$z$ by a cherry~$\{x,y\}$. By 
	Observation~\ref{obs:AddCherry}, the two distinct networks~$M'$ and~$M''$
	realize the same shortest distance matrix. However, this shortest distance 
	matrix is precisely~$\D_m(N)$, since~$M'$ is isomorphic to~$N$. This 
	contradicts the fact that~$N$ is reconstructible from its shortest distance 
	matrix. Therefore~$N'$ must be reconstructible from its shortest distance 
	matrix.
	
	Now suppose that the network~$N'$ is reconstructible from its shortest 
	distance matrix. If there were two distinct networks~$N$ and~$M$ 
	realizing~$\D_m(N)$, then these networks must both contain the 
	cherry~$\{x,y\}$. Reducing this cherry to a leaf~$z$, we see by 
	Observation~\ref{obs:CherryReduction} that both reduced networks, which are 
	distinct, realize the same shortest distance matrix, which is 
	exactly~$\D_m(N')$. However, this is not possible, as~$N'$ is 
	reconstructible from its shortest distance matrix. Therefore~$N$ is also 
	reconstructible from its shortest distance matrix.
\end{proof}

Let~$N$ be a network. \emph{Subtree reduction} refers to the action of reducing 
cherries of~$N$ until it is no longer possible to do so. We refer to the 
resulting network as the \emph{subtree reduced version} of~$N$. Note that the 
subtree reduced version of~$N$ is unique, and the order in which the cherries 
are reduced does not matter. 
The following corollary follows 
immediately by applying Lemma~\ref{lem:CherryReductionRecon} to every cherry 
that is reduced in the subtree reduction.

\begin{corollary}\label{cor:SubtreeReduction}
	A network~$N$ is reconstructible from its shortest distance matrix if and 
	only if the subtree reduced version of~$N$ is reconstructible from its 
	shortest distance matrix.
\end{corollary}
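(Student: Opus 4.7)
The plan is to iterate Lemma~\ref{lem:CherryReductionRecon} along the sequence of cherry reductions that defines the subtree reduced version. Concretely, I would write the subtree reduction of $N$ as a finite sequence $N = N_0, N_1, \ldots, N_t = N^*$, where $N^*$ is the subtree reduced version and, for each $i \in \{0, 1, \ldots, t-1\}$, the network $N_{i+1}$ is obtained from $N_i$ by reducing a single cherry to a leaf. Such a sequence exists because each cherry reduction strictly decreases the number of leaves, and by the remark preceding the corollary the process terminates at a unique network $N^*$ regardless of the order of reductions.

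I would then argue by induction on $i$ that $N$ is reconstructible from its shortest distance matrix if and only if $N_i$ is reconstructible from its shortest distance matrix. The base case $i = 0$ is trivial. For the inductive step, assume the equivalence holds for $N_i$. Applying Lemma~\ref{lem:CherryReductionRecon} to $N_i$ with the cherry being reduced to form $N_{i+1}$ yields that $N_i$ is reconstructible from its shortest distance matrix if and only if $N_{i+1}$ is; composing with the inductive hypothesis gives the equivalence for $N_{i+1}$. Taking $i = t$ yields the corollary.

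There is essentially no obstacle here, since each step of the induction is a direct invocation of Lemma~\ref{lem:CherryReductionRecon}; the only minor point worth checking is that the intermediate networks $N_i$ remain valid unrooted binary phylogenetic networks in the sense of Section~\ref{sec:Preliminaries} (so that the lemma applies), which is immediate from the definition of cherry reduction since it replaces a degree-$3$ vertex adjacent to two leaves by a single leaf. Because the subtree reduced version is unique, the statement does not depend on the particular sequence of reductions chosen.
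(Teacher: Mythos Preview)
Your proposal is correct and is essentially the same as the paper's proof, which simply notes that the corollary follows immediately by applying Lemma~\ref{lem:CherryReductionRecon} to every cherry reduced in the subtree reduction. Your explicit induction along the sequence $N_0,\ldots,N_t$ just spells out this iteration in detail.
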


Note that Observations~\ref{obs:CherryReduction} and~\ref{obs:AddCherry}, 
Lemma~\ref{lem:CherryReductionRecon}, and Corollary~\ref{cor:SubtreeReduction} 
can 
naturally be extended to the sl-distances, with a single tweak for 
Observations~\ref{obs:CherryReduction} and~\ref{obs:AddCherry}, where 
the longest distances are adjusted exactly the same as done for the shortest 
distances (replace~$d_m$ by~$d_l$ wherever possible). This means we may assume 
for the rest of the paper, that all networks have undergone subtree reduction, 
and therefore that all networks contain no cherries.
\medskip

\subsection{Chains}\label{subsec:Chain}

Upon reducing all cherries from our networks, we may identify unique chains 
from shortest distance matrices. Recall that chains are written as 
sequences~$a=(a_1,\ldots,a_k)$ for some~$k\ge1$. We shall sometimes write these 
as~$(a,k)$. In 
what follows, we will often require a way of 
referring to leaves of the network that are not in a particular chain. So 
while~$a$ is a sequence of leaves, we shall sometimes treat~$a$ as a set of 
leaves, e.g., $X-a = \{l\in X: l\ne a_i \text{ for } i\in[k]\}$.

\begin{observation}\label{obs:Chain}
	Let~$\mathcal{D}_m$ be a shortest distance matrix. A network~$N$ on~$X$ 
	that realizes~$\mathcal{D}_m$ contains a chain~$a=(a_1,\ldots,a_k)$ 
	where~$k\ge1$ if and 
	only if~$d_m(a_i,a_{i+1}) = 3$ for all~$i\in [k-1]$ and there exists no 
	leaf~$l\in X-a$ such that~$d_m(a,l) =3$.
\end{observation}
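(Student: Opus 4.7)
Throughout, recall that after subtree reduction the network contains no cherries, so any two distinct leaves satisfy $d_m(x,y)\ge 3$ (degree constraints rule out distances $1$ and $2$ without a shared neighbor). The plan is to prove the two implications separately, with the bulk of the work being the forward direction.

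For the forward direction, assume $a=(a_1,\ldots,a_k)$ is a maximal chain with associated spine $p_a=(v_1,\ldots,v_k)$. The distance equalities $d_m(a_i,a_{i+1})=3$ follow at once: the concatenation $a_i\text{-}v_i\text{-}v_{i+1}\text{-}a_{i+1}$ is a path of length $3$, and the no-cherry hypothesis prevents anything shorter. For the harder second condition, I would argue by contradiction: suppose some $l\in X-a$ satisfies $d_m(a,l)=3$, so $d_m(a_j,l)=3$ for some $j$. A length-$3$ path from $a_j$ to $l$ must have the form $a_j\text{-}v_j\text{-}w\text{-}l$ where $w$ is the neighbor of $l$. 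Since $v_j$ has degree $3$, its non-leaf neighbors are at most $v_{j-1}$, $v_{j+1}$, and possibly one further vertex $u$; the first two possibilities for $w$ would make $\{l,a_{j\pm 1}\}$ share a neighbor, a cherry that cannot occur. Thus $w$ must be the third neighbor $u$ of $v_j$, which forces $j\in\{1,k\}$ (internal $v_j$ have no such third neighbor). But then $(l,a_1,\ldots,a_k)$ or $(a_1,\ldots,a_k,l)$ is a chain, contradicting maximality of $a$.

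For the backward direction, assume the two distance conditions on the sequence $a=(a_1,\ldots,a_k)$. First I would show that $a$ is a chain. Since $d_m(a_i,a_{i+1})=3$ and the no-cherry assumption forces $v_i\ne v_{i+1}$, a length-$3$ path between the leaves necessarily passes through $v_i$ and $v_{i+1}$ consecutively, so $v_iv_{i+1}$ is an edge of $N$. To conclude that $v_1,\ldots,v_k$ is a genuine path (rather than a walk), note that $v_i=v_j$ for $i\ne j$ would make $\{a_i,a_j\}$ a cherry, which is forbidden; hence the $v_i$ are distinct, and $a$ is a chain of length~$k$. Maximality then follows by another contradiction: if $a$ extends to a longer chain by prepending (or appending) a leaf $l$, then the forward direction applied to this extension gives $d_m(l,a_1)=3$ (or $d_m(l,a_k)=3$), hence $d_m(a,l)=3$ with $l\in X-a$, violating the hypothesis.

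The main obstacle is really just the careful case analysis in the forward direction showing that a hypothetical $l$ with $d_m(a,l)=3$ must attach to an end-spine vertex in exactly the way that enlarges the chain; once the no-cherry reduction and the degree-$3$ structure of spine vertices are in hand, the rest is bookkeeping. The degenerate case $k=1$ is uniform with the general case: the distance-$3$ condition on consecutive chain members is vacuous, and both directions reduce to the statement that $(a_1)$ is maximal iff neither non-leaf neighbor of $v_1$ carries a pendant leaf, which is exactly the absence of any $l\in X-\{a_1\}$ with $d_m(a_1,l)=3$.
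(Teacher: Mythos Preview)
The paper states this as an Observation and gives no proof; your argument is correct and supplies the details the paper omits. One small point worth tightening: in the backward direction you argue non-maximality only via prepending or appending a single leaf, but the paper's definition of maximal is that $a$ is not a subsequence of any longer chain, which in principle allows non-contiguous subsequences (e.g.\ in a level-$1$ blob one can have $a=(b_1,b_3)$ inside $b=(b_1,b_2,b_3)$). This is harmless: if $a$ is a proper subsequence of a chain $b$, pick any $b_j\notin a$ and slide to the nearest index $j'$ with $b_{j'}\in a$; then $b_{j'\pm 1}\notin a$ is adjacent in $b$ to $b_{j'}\in a$, so $d_m(a,b_{j'\pm 1})\le d_m(b_{j'},b_{j'\pm 1})=3$, and the no-cherry bound gives equality. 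With that adjustment your proof is complete.
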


Observation~\ref{obs:Chain} implies that the leaves in a network without 
cherries can be partitioned into chains. Indeed, no leaf can be contained in 
two distinct chains, as otherwise the chains would be non-maximal.
 Let~$(a,k)$ and~$(b,\ell)$ be two 
distinct chains. We say that~$(a,k)$ and~$(b,\ell)$ are \emph{adjacent} 
if~$d_m(a_i,b_j) = 
4$ for some combination of~$i\in\{1,k\}$ and~$j\in\{1,\ell\}$. Observe that 
adjacent chains of a network~$N$ can be identified from shortest distance 
matrices, by 
first partitioning the leaf set of~$N$ into chains and then checking for chain 
end-leaves that are distance-$4$ apart. 
\rev{We say that two chains~$(a,k)$ and~$(b,\ell)$ are \emph{adjacent once} if exactly one distinct
pair of~$(a,k)$ and~$(b,\ell)$ are distance-$4$ apart.}
We say that the chains~$(a,k)$ 
and~$(b,\ell)$ are 
\emph{adjacent twice} if two \rev{distinct} pairs of~$(a,k)$ and~$(b,\ell)$ 
end-leaves are distance-$4$ apart. Since we assume networks to be binary, two 
chains may be adjacent at most twice.
\rev{In the special case where~$k=\ell=1$, we can only tell whether the chains are adjacent from the shortest distances. We cannot tell whether they are adjacent twice. This can however be inferred from the sl-distance matrix.}

\subsection{Known results}

The following results appeared in~\cite{van2020reconstructibility}.

\begin{lemma}[Theorem 4.2; Lemma 
4.4; Lemma 5.3 of \citep{van2020reconstructibility}]\label{lem:L1}
	Let~$N$ be a level-$2$ network on~$|X|$. Then~$N$ is reconstructible from 
	its shortest distance matrix if~$N$ is also level-$1$, if~$|X|<4$, or 
	if~$N$ contains only one blob.
\end{lemma}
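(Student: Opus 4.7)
The plan is to apply Corollary~\ref{cor:SubtreeReduction} so we may assume throughout that $N$ is cherry-free, and then treat the three sub-statements separately. By Observation~\ref{obs:Chain} and the adjacency notions of Section~\ref{subsec:Chain}, the chains of $N$ and their pairwise adjacencies are already determined by $\mathcal{D}_m(N)$, so the task in each case is to pin down the generator shape and the assignment/positions of chains on its sides.

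The case $|X|<4$ reduces after cherry reduction to a short enumeration. Because $N$ is binary, cherry-free, connected, and level-$2$, only a very small finite family of possibilities remains for $|X|\in\{2,3\}$, and in each case one reads the network directly off the handful of inter-leaf shortest distances.

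For the level-$1$ case, each blob of $N$ is a simple cycle carrying chains, with pendant subtrees attached via cut-edges. I would first group chains by blob using adjacency (two adjacent chains either sit on a common cycle or meet across a cut-edge, and these two situations can be told apart by checking whether their end-leaves satisfy the cycle relation $d_m(a_i,b_j)+d_m(a_i,b_{j'})=d_m(b_j,b_{j'})+2\cdot(\text{gap})$ for some consistent gap on a common cycle). Once chains are grouped, the cyclic order around a cycle, the gap lengths, and the total cycle length are all forced by the shortest distances between end-leaves of distinct chains on that cycle, since on a cycle $d_m$ is the minimum of the two arc-lengths. A straightforward recursion down each pendant cut-edge then rebuilds the rest of $N$.

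The single-blob level-$2$ case is where the substantive work lies. A level-$2$ blob is a theta graph with two poles and three internal sides, so $N$ consists of a theta with chains distributed among its three sides. The plan is to recover, in order: (i)~the chains and their adjacencies, already known from $\mathcal{D}_m(N)$; (ii)~the partition of chains into the three sides, using that any shortest path between end-leaves on different sides must pass through a pole, while a shortest path between end-leaves on the same side need not; (iii)~the gap lengths between consecutive chains and between extremal chains and the poles, obtained by subtracting known chain-internal distances from the relevant shortest distances. The main obstacle will be step~(ii), because in general level-$2$ networks two different side assignments can produce identical shortest distance matrices (Figure~\ref{fig:level2CE}); the intended resolution is to show that the counterexamples of this type essentially require the freedom to swap material between two distinct blobs, which is unavailable under the single-blob hypothesis. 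Concretely, I would argue that with a single theta the joint system of distance equations relating the three sides via both poles is over-determined, so that any side assignment consistent with $\mathcal{D}_m(N)$ must coincide with the true one up to the automorphisms of the theta generator, and step~(iii) then fixes the remaining metric data uniquely.
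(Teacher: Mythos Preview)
The paper does not prove this lemma: it is stated as a direct citation of results from \cite{van2020reconstructibility} (their Theorem~4.2, Lemma~4.4, and Lemma~5.3), with no argument given here. So there is no in-paper proof to compare against; you are attempting to reprove a result the present paper simply imports.

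Your outline for $|X|<4$ and for the level-$1$ case is plausible, if sketchy. The genuine gap is in the single-blob level-$2$ case, precisely at your step~(ii). You propose to separate ``same side'' from ``different side'' by whether a shortest path between end-leaves must pass through a pole, but you only have access to distances, not to paths, and adjacent chain end-leaves near a common pole are also distance~$4$ apart---so adjacency alone does not distinguish the two situations. More seriously, when one or two of the three theta sides are empty, the adjacency graph of chains collapses (it becomes a cycle, or a single path), and you must still locate the pole(s) along it and decide which empty side(s) to insert. Your resolution---that the distance equations are ``over-determined'' once both poles are in play---is exactly the nontrivial content of the result, and you have asserted it rather than argued it. The counterexamples in Figure~\ref{fig:level2CE} do require two blobs, as you note, but explaining \emph{why} a single theta forces uniqueness is the whole point of Lemma~5.3 in \cite{van2020reconstructibility}, and your sketch does not supply that argument.
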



Therefore we may assume that the networks we consider are always at least 
level-$2$ on at least four leaves, and that the network contains at least two 
blobs. Furthermore, from Section~\ref{subsec:Che}, we may assume that the 
networks contain no cherries.

\section{Leaf on each generator side}\label{sec:GeneratorSide}

In this section, we consider networks with at least one leaf on each generator 
side, and show that such networks are reconstructible from their 
shortest distance matrices, regardless of level. Let~$N$ be one such network.
Since we may assume that~$N$ has no cherries, each side of~$N$ can be 
determined 
by the chain contained therein. Furthermore, two sides are adjacent (i.e., the 
sides share a common endpoint in~$G(N)$) if and only if the chains on the sides 
are adjacent. Since chains partition the leaf set of~$N$, this implies 
that the structure of the generator~$G(N)$, and therefore the structure of 
the network~$N$ is determined by the chains of~$N$ and their adjacency in~$N$.


%

Every vertex in~$N$ is either a leaf, a spine vertex of some chain, or a 
generator vertex.
Since networks considered here are binary, exactly two or three generator sides 
may be incident to the same vertex in~$G(N)$ (as per conventional graph theory, 
we say 
that an edge is \emph{incident} to its endpoints). 
If a vertex is incident to exactly two sides in~$G(N)$, then one of these sides 
must be a loop. Loops in~$G(N)$ correspond to pendant level-$1$ blobs in~$N$. 
Suppose that~$(a,k)$ is a chain (recall that this is a chain of length~$k$) 
and is adjacent to exactly one chain~$(b,\ell)$ 
twice, and 
that~$(a,k)$ is not adjacent to any other chains. Then~$(a,k)$ is contained 
in a pendant level-$1$ blob, since we may assume that~$N$ is a level-$2$ 
network 
with at least two blobs. Note that~$k\geq2$ as~$N$ contains no parallel edges. 
In such a case, we call the pair~$(a,b)$ the \emph{bulb} of~$a$ and~$b$. We 
say that~$a$ is contained in the bulb as the \emph{petal}. We say 
that~$N$ \emph{contains} the petal~$(a,b)$.

If a generator 
vertex is incident to three sides, then the three distinct
chains in the network, corresponding to these three 
sides must be pairwise adjacent. Now 
consider three pairwise adjacent distinct chains~$(a,k), (b,\ell), (c,m)$ 
in~$N$. Since we may assume~$N$ is not a level-$2$ network with a single 
blob (as we know such networks are reconstructible from their shortest 
distances by Lemma~\ref{lem:L1}), any three chains may be pairwise adjacent at 
most once. In particular, 
the end-leaves of~$a,b,c$ that are adjacent are unique. In this case, we 
say that~$(a,b,c)$ forms a \emph{pairwise adjacent triple} (see 
Figure~\ref{fig:BuPe} for examples of pairwise adjacent triples and 
petals). 
\rev{Therefore, if $(a,b,c)$ is a pairwise adjacent triple, then there is a generator vertex that is incident to three sides such that one side contains chain $a$, one chain $b$ and one chain $c$.}
We say that~$N$ \emph{contains} the pairwise adjacent 
triple~$(a,b,c)$.

\begin{figure}
	\centering
	\begin{tikzpicture}[every node/.style={draw, circle, fill, inner 
		sep=0pt}]
		\tikzset{edge/.style={very thick}}
			
		\node[] (origin)	at	(0,0)	{a};
		\node[] (north) 	at 	(0,2)	{a};
		\node[] (sw)		at	({2*cos(-135)},{2*sin(-135)})	{a};
		\node[] (se)		at	({2*cos(-45)},{2*sin(-45)})		{a};
		
		\draw[black, very thick, domain=90:140] plot 
		({2*cos(\x)},{2*sin(\x)});
		\draw[black, very thick, domain=140:175,dashed] plot 
		({2*cos(\x)},{2*sin(\x)});
		
		\node[] (a1)		at	({2*cos(140)},{2*sin(140)})		{a};
		\node[]	(a2)		at	({2*cos(175)},{2*sin(175)})		{a};
		\node[above left = 3mm of a1] (la1) {a}
		node[draw=none,fill=none,below left=2mm and 3mm of la1] {\large $a$};
		\node[above left = 3mm of a2] (la2) {a};
		
		\draw[edge]	(a1) -- (la1)
					(a2) -- (la2);

		\draw[black, very thick, domain=175:252.5] plot 
		({2*cos(\x)},{2*sin(\x)});
		\draw[black, very thick, domain=252.5:287.5,dashed] plot 
		({2*cos(\x)},{2*sin(\x)});
					
		\node[] (d1)		at	({2*cos(252.5)},{2*sin(252.5)})		{a};
		\node[]	(d2)		at	({2*cos(287.5)},{2*sin(287.5)})		{a};
		\node[below = 3mm of d1] (ld1) {a}
		node[draw=none,fill=none,right= 3mm of ld1] {\large 
		$d$};
		\node[below= 3mm of d2] (ld2) {a};		
		
		\draw[edge]	(d1) -- (ld1)
					(d2) -- (ld2);

		\draw[black, very thick, domain=62.5:90] plot 
		({2*cos(\x)},{2*sin(\x)});
		\draw[black, very thick, domain=27.5:62.5,dashed] plot 
		({2*cos(\x)},{2*sin(\x)});
		
		\node[] (f1)		at	({2*cos(62.5)},{2*sin(62.5)})		{a};
		\node[]	(f2)		at	({2*cos(27.5)},{2*sin(27.5)})		{a};
		\node[above right = 3mm of f1] (lf1) {a}
		node[draw=none,fill=none,below right=1mm and 3mm of lf1] {\large $f$};
		\node[above right = 3mm of f2] (lf2) {a};		
		
		\draw[edge]	(f1) -- (lf1)
					(f2) -- (lf2);

		\draw[black, very thick, domain=-72.5:-30] plot 
		({2*cos(\x)},{2*sin(\x)});
		\draw[black, very thick, domain=-30:-15,dashed] plot 
		({2*cos(\x)},{2*sin(\x)});
		
		\node[] (g1)		at	({2*cos(-15)},{2*sin(-15)})		{a};
		\node[]	(g2)		at	({2*cos(-30)},{2*sin(-30)})		{a};
		\node[below right = 3mm of g1] (lg1) {a}
		node[draw=none,fill=none,below right=2mm and 1mm of lg1] {\large $g$};
		\node[below right = 3mm of g2] (lg2) {a};		
		
		\draw[edge]	(g1) -- (lg1)
					(g2) -- (lg2);
					
		\draw[black, very thick, domain=-15:27.5] plot 
		({2*cos(\x)},{2*sin(\x)});
					
		\node[]	(b1)	at	(0,{2/3})	{a};
		\node[] (b2) 	at	(0,{4/3})	{a};
		\node[draw=none,fill=none] (b)	at (-1,1) {\large $b$};
		\node[left = 3mm of b1] (lb1)	{a};
		\node[left = 3mm of b2]	(lb2)	{a};
		
		\draw[edge]	(b1) -- (lb1)
					(b2) -- (lb2)
					(north) -- (b2)
					(b1) -- (origin);
		\draw[edge, dashed] (b1) -- (b2);
		
		\node[]	(c1)	at	({2*cos(225)/3},{2*sin(225)/3})	{a};
		\node[] (c2) 	at	({4*cos(225)/3},{4*sin(225)/3})	{a};
		\node[below right = 3mm of c1] 	(lc1)	{a}
		node[draw=none,fill=none,below =2mm of lc1] {\large $c$};
		\node[below right = 3mm of c2]	(lc2)	{a};
		
		\draw[edge]	(c1) -- (lc1)
					(c2) -- (lc2)
					(origin) -- (c1)
					(c2) -- (sw);
		\draw[edge,dashed] (c1) -- (c2);
		
		\node[]	(e1)	at	({2*cos(-45)/3},{2*sin(-45)/3})	{a};
		\node[] (e2) 	at	({4*cos(-45)/3},{4*sin(-45)/3})	{a};
		\node[above right = 3mm of e1] 	(le1)	{a};
		\node[above right = 3mm of e2]	(le2)	{a}
		node[draw=none,fill=none,above =1.2mm of le2] {\large $e$};
		
		\draw[edge]	(e1) -- (le1)
					(e2) -- (le2)
					(origin) -- (e1)
					(e2) -- (se);
		\draw[edge,dashed] (e1) -- (e2);
		
		\node[] (east) at (2,0)	{a};
		\node[] (west) at (6,0) {a};

		\draw[black, very thick, domain=20:340] plot 
		({8+2*cos(\x)},{2*sin(\x)});
		\draw[black, very thick, domain=-20:20,dashed] plot 
		({8+2*cos(\x)},{2*sin(\x)});
		
		\node[]	(i1)	at	({8+2*cos(20)},{2*sin(20)})		{a};
		\node[] (i2) 	at	({8+2*cos(-20)},{2*sin(-20)})	{a};
		\node[right = 3mm of i1] 	(li1)	{a}
		node[draw=none,fill=none,below =3.5mm of li1] {\large $i$};
		\node[right = 3mm of i2]	(li2)	{a};
		
		\draw[edge]	(i1) -- (li1)
					(i2) -- (li2);
					
		\node[]	(h1)	at	(3.5,0)		{a};
		\node[] (h2) 	at	(4.5,0)		{a};
		\node[below = 3mm of h1] 	(lh1)	{a}
		node[draw=none,fill=none,right=2mm of lh1] {\large $h$};
		\node[below = 3mm of h2]	(lh2)	{a};
		
		\draw[edge]	(h1) -- (lh1)
					(h2) -- (lh2)
					(east) -- (h1)
					(h2) -- (west);
		\draw[edge,dashed] (h1) -- (h2);
		
	\end{tikzpicture}
	\caption{A level-$3$ network with leaves on every generator side. Each 
	letter represents a chain on a side of the network. The network 
	contains the pairwise adjacent triples $(a,b,f), (a,c,d), (b,c,e), (d,e,g), 
	(f,g,h)$ and the bulb~$(i,h)$ with~$i$ as the petal. The dashed edges 
	indicate how each of the nine chains is of length at least~$1$.}
	\label{fig:BuPe}
\end{figure}
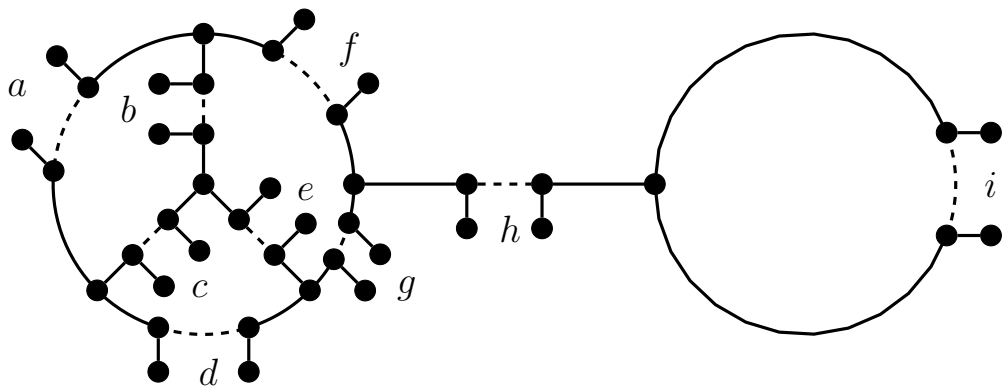

Note that bulbs and pairwise adjacent triples both consist of three leaves. In 
what follows, the notion of a \emph{median} vertex will be important. Given 
three 
vertices~$a,b,c$ of a network, a median of~$a,b,c$ is a vertex that belongs to 
a shortest path between each pair of~$a,b,c$. \rev{A median may not always exist for 
any three vertices (consider, for example, a cycle on three vertices). However, for
our purposes, we shall consider medians of three leaves, which will always exist.}
\rev{Moreover}, a median of three vertices is not 
necessarily unique, as there may be more than one shortest path between a pair 
of vertices in a network.

\begin{lemma}\label{lem:LESMatrixType}
	Let~$\D_m$ be a shortest distance matrix. Then~$\D_m$ can only be realized 
	by a network~$N$ with leaves on each side of the generator, where~$N$ is 
	not a single level-$2$ blob if and only if 
	each chain of~$\D_m$ is contained in either
	\begin{enumerate}[label=(\roman*)]
		\item two distinct pairwise adjacent triples; or
		\item one pairwise adjacent triple and one bulb as a non-petal; or
		\item one bulb as the petal; or
		\item two bulbs as non-petals.
	\end{enumerate} 
\end{lemma}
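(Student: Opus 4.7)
The plan is to prove both directions by a local analysis of the generator $G(N)$ of a realizing network $N$. Since $N$ is binary, each generator vertex has degree exactly $3$ in $G(N)$ (with each loop counted twice), so the sides incident at any given vertex fit exactly one of two patterns: either three non-loop sides meet, or one loop and one non-loop side meet. Under the hypothesis that every side carries a leaf, each side is identified with its unique chain via Observation~\ref{obs:Chain}, and two chains are adjacent exactly when their sides share a generator vertex. The first pattern at a vertex then produces a pairwise adjacent triple of the three associated chains, while the second produces a bulb whose petal is the chain on the loop and whose non-petal is the chain on the non-loop side.

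For the forward direction, I would take $\D_m = \D_m(N)$ with $N$ satisfying the hypothesis, fix an arbitrary chain $(a,k)$ of $\D_m$ on its side $s$ with boundary vertices $e_0, e_1$, and split on whether $s$ is itself a loop. If $s$ is a loop, then $e_0 = e_1$ has the loop pattern with $s$ itself as the loop, placing $(a,k)$ as petal of exactly one bulb, so case (iii) holds. Otherwise $e_0 \ne e_1$, and each of $e_0, e_1$ independently contributes either a pairwise adjacent triple containing $(a,k)$ or a bulb with $(a,k)$ as non-petal; the three possible combinations of these options across the two boundary vertices give exactly cases (i), (ii), and (iv). A short argument is then needed to confirm that the two resulting structures are distinct.

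For the backward direction, I would let $N$ be any realization of $\D_m$ and argue by contradiction that $N$ has no empty side. An empty side cannot be a loop (simple graphs have no length-$0$ loops), so it connects distinct generator vertices $e_0 \ne e_1$. Analysing the pattern at $e_0$: if it is the loop pattern, then the chain on the loop has no adjacent chain at all, since its only potential neighbour would sit across the empty side $s$, contradicting its required membership in any of (i)--(iv). If instead $e_0$ has the three-non-loop pattern, the two non-$s$ sides at $e_0$ carry chains that are adjacent to each other at $e_0$ but with no third chain present, so $e_0$ produces neither a triple nor a bulb containing either of those chains, leaving each with at most one triple-or-bulb structure (from its other boundary), again contradicting (i)--(iv). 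Ruling out that $N$ is a single level-$2$ blob is immediate because in the theta-graph generator each chain lies in just a single triple repeated at both poles and in no bulb, incompatible with (i)--(iv).

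The most delicate point I foresee is the distinctness of the two structures in the forward direction. The only potential obstruction is that the same pair of chains could meet $(a,k)$ at both $e_0$ and $e_1$ via parallel sides in $G(N)$, collapsing the triples or bulbs at the two ends. In case (i) this would require three mutually parallel sides between $e_0$ and $e_1$, which saturates both vertices' degrees and forces $G(N)$ to be the theta graph; then $N$ would be a single level-$2$ blob, contrary to hypothesis. Analogous and simpler arguments handle cases (ii) and (iv), where the two structures are either of different types (triple versus bulb) or are centred on different loops and therefore cannot coincide.
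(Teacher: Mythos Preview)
Your proposal is correct and follows essentially the same local analysis of generator vertices as the paper's proof: each generator vertex encodes either a pairwise adjacent triple or a bulb, and each chain picks up exactly two such structures from the boundary vertices of its side (counting a loop once as the petal case). One small point of care in your backward direction: you tacitly assume the two non-$s$ sides at $e_0$ carry chains, but they could also be empty; you should first choose $e_0$ to be a vertex incident to both an empty and a non-empty side (such a vertex exists by connectedness of $G(N)$), which is exactly what the paper does when it locates ``adjacent sides $s_1$ and $s_2$ such that $s_1$ contains a chain $c$ while $s_2$ is empty.''
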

\begin{proof}
	Let~$N$ be a network with leaves on each generator side and suppose 
	that~$N$ realizes~$\D_m$. 
	Then each generator vertex of~$N$ is the median 
	of three 
	end-leaves of (not necessarily distinct) chains, such that these end-leaves 
	are pairwise shortest distance-$4$ apart. Any three chains may be pairwise 
	adjacent at most once (unless~$N$ is a network with a single level-$2$ 
	blob, but we have specifically excluded this case in the statement of the 
	lemma), and a chain contained in a pendant 
	level-$1$ blob is 
	adjacent twice to exactly one other chain. As~$N$ is binary, 
	these median vertices encode either pairwise adjacent triples or 
	bulbs. By \emph{encode}, we mean that for every three end-leaves that are 
	pairwise distance-$4$ apart, the median vertex corresponding to it is 
	unique. Each chain is contained in 
	exactly two of such constructs, 
	where being contained in a bulb as the petal counts as two, since each side 
	has two boundary vertices (except for the loop). The result follows 
	immediately.
	
	To show the other direction of the lemma, we prove the contrapositive. 
	Let~$N$ be a network that has at least one empty generator side, and 
	suppose that~$N$ realizes~$\D_m$. We want to show that at least one chain 
	of~$N$ does not satisfy any of the four properties $(i) - (iv)$ as stated 
	in 
	the statement of the lemma.
	Find adjacent sides~$s_1$ and~$s_2$ of~$N$, such that $s_1$ contains a 
	chain~$c$ while~$s_2$ is empty. 
	Clearly,~$c$ cannot be contained in a bulb as its petal, since~$s_2$ 
	contains no chains to which~$c$ can be adjacent twice (cannot 
	satisfy~$(iii)$). So we may 
	assume that~$c$ 
	is not contained in a pendant level-$1$ blob, and therefore that the 
	boundary vertices~$e_0,e_1$ of~$s_1$ are distinct. We may assume without 
	loss of generality that~$e_0$ is the boundary vertex of~$s_2$. Since~$s_2$ 
	is empty,~$e_0$ cannot be a median of three distinct end-leaves of chains. 
	This implies that~$c$ can only be contained in exactly one pairwise 
	adjacent 
	triple, or in exactly one bulb as a non-petal (which is encoded by~$e_1$) 
	(cannot 
	satisfy $(i),(ii)$, nor~$(iv)$).
\end{proof}


\begin{lemma}\label{lem:CMV}
	Let~$N$ and~$N'$ be networks with a leaf on each generator side, such that 
	neither~$N$ nor~$N'$ are level-$2$ and contain precisely one level-$2$ 
	blob. Then~$N$ and~$N'$ are isomorphic if and only if they contain the same 
	chains, the same pairwise adjacent triples, and the same bulbs, \rev{where it is known which end-leaves of the chains are adjacent}.
\end{lemma}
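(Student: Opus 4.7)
The forward direction is immediate: any isomorphism between $N$ and $N'$ preserves chain structure, pairwise adjacent triples, and bulbs, so the data must agree.

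For the converse, the plan is to show that the chains, pairwise adjacent triples, and bulbs of $N$, together with the information of which end-leaves are adjacent, determine the generator $G(N)$ and its inflation up to isomorphism. By Lemma~\ref{lem:LESMatrixType}, every chain of $N$ lies in exactly two \emph{constructs} (pairwise adjacent triples, or bulbs in the appropriate role, with a petal counted twice for its own bulb). Each pairwise adjacent triple $(a,b,c)$ corresponds to a unique degree-$3$ generator vertex at which the three sides carrying $a$, $b$, $c$ meet. Each bulb $(a,b)$ with petal $a$ corresponds to a unique generator vertex that is simultaneously the basepoint of the loop in $G(N)$ carrying $a$ and one endpoint of the side carrying $b$. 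The construct data therefore encodes the vertex set of $G(N)$ together with the side-to-vertex incidences, and the end-leaf adjacency data pins down which end of each chain is attached to which generator vertex.

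Building on this, I construct an auxiliary multigraph $H$ with one vertex per triple and one vertex per bulb, and for each chain $c$ an edge labelled $c$ joining the two constructs that contain $c$ (a loop when $c$ is a petal). The same construction applied to $N'$ produces the same multigraph $H$, since the input data is identical in $N$ and $N'$; moreover, the construction recovers exactly the generator in either network. Inflating $H$ by replacing each edge labelled $c$ with the spine of the chain $c$, attaching the leaves of $c$ along the spine, and orienting the spine between its two boundary vertices according to the end-leaf adjacency data, produces networks isomorphic to $N$ and $N'$ respectively. Since both inflations are carried out on the same $H$ using the same chains and the same adjacency data, the resulting networks are isomorphic, as required.

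The main obstacle is the unambiguous orientation of each chain along its side; this is precisely what the hypothesis of knowing which end-leaves are adjacent provides. Minor care is needed for chains of length one (whose single end-leaf lies in both incident constructs) and for loops (where the two construct-endpoints coincide), but in these cases the inflation is automatically well-defined. The hypothesis that neither network is level-$2$ with a single level-$2$ blob is what allows us to invoke Lemma~\ref{lem:LESMatrixType} and to assert that any three chains are pairwise adjacent at most once, so that each pairwise adjacent triple indeed corresponds to a single generator vertex.
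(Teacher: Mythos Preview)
Your proposal is correct and follows essentially the same approach as the paper. Both arguments identify generator vertices with the constructs (pairwise adjacent triples and bulbs), generator edges with chains, and then inflate the resulting multigraph using the chains with orientation determined by the end-leaf adjacency data; your presentation via the explicitly built auxiliary multigraph $H$ is a slightly more constructive packaging of the same idea the paper carries out by arguing directly that $G(N)=G(N')$.
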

\begin{proof}
	Suppose first that~$N$ and~$N'$ contain the same chains, 
	the same pairwise adjacent triples, and the same bulbs. Then the networks 
	must 
	contain the same leaves and the same spine vertices (and also the edges 
	therein). The remaining vertices in~$N$ and~$N'$ are their generator 
	vertices, and the remaining edges are those incident on generator vertices 
	and the end-spine vertices. 
	
	\rev{We show first that~$G(N) = G(N')$.
	Every edge in the generator is a side that contains a chain.
	Since~$N$ and~$N'$ have the same chains, the number of edges in~$G(N)$ is the same as that in~$G(N')$.
	Every vertex in the generator is a median of end-leaf vertices of three (not-necessarily distinct) chains. 
	These generator vertices uniquely encode a pairwise adjacent triple or a bulb, since~$N$ 
	and~$N'$ are not level-$2$ networks that contain precisely one 
	level-$2$ blob.
	Since~$N$ and~$N'$ have the same pairwise adjacent triples and the same bulbs,~$G(N)$ and~$G(N')$ must have the same number of vertices.
	To see that~$G(N) = G(N')$, observe that each generator vertex that encodes the pairwise adjacent triple~$(a,b,c)$ or a bulb~$(a,b)$ links the generator edges that contain the chains~$a,b,c$ or~$a,b$, respectively. 
	This means that two generator edges share a common endpoint if and only if the chains that they contain are in the same pairwise adjacent triple or bulb.}

    \rev{To see that~$N$ is isomorphic to~$N'$, simply attach all chains to their corresponding generator sides, noting that the placement of the end-leaves are determined by the composition of the pairwise adjacent triples.
    Because we know which end-leaves of the chains are adjacent, the orientation of the chains are also determined.
    Since~$N$ and~$N'$ contain the same pairwise adjacent triples and bulbs, they must be isomorphic.}
	\medskip
	
	
	Conversely, if two networks are isomorphic, then they must have the same 
	chains, the same pairwise adjacent triples, and the same bulbs.
\end{proof}

\begin{theorem}\label{thm:GenSide}
	Networks with a leaf on each generator side are reconstructible 
	from its shortest distances.
\end{theorem}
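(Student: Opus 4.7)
The plan is to translate the shortest distance matrix $\D_m(N)$ into the combinatorial data used by Lemma~\ref{lem:CMV} (chains, pairwise adjacent triples, bulbs, together with which end-leaves realize each adjacency), and then use Lemma~\ref{lem:LESMatrixType} to argue that \emph{any} network realizing $\D_m(N)$ must also have a leaf on every generator side, so that Lemma~\ref{lem:CMV} can be applied.

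First, I would normalize $N$. By Corollary~\ref{cor:SubtreeReduction} I may assume $N$ is cherry-free, since cherry reduction leaves the generator unchanged and therefore preserves the hypothesis. By Lemma~\ref{lem:L1} I may further exclude level-$1$ networks, the case $|X|<4$, and networks consisting of a single blob, all of which are already known to be reconstructible. Hence I assume $N$ is cherry-free, at least level-$2$, has at least four leaves, and contains at least two blobs.

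Now, given $\D_m(N)$, Observation~\ref{obs:Chain} returns the chain decomposition directly, and pairs of end-leaves at distance $4$ encode all chain adjacencies together with the specific end-leaves involved. From this one reads off the pairwise adjacent triples (three chains pairwise at distance $4$ in a consistent way) and the bulbs (a chain whose only adjacent chain is hit twice, i.e.\ via two distinct end-leaf pairs). Because $N$ has a leaf on every side and more than one blob, Lemma~\ref{lem:LESMatrixType} tells us that every chain in $\D_m(N)$ lies in exactly two of the structures (i)--(iv); this is a condition on $\D_m(N)$ alone. Let $N'$ be any network realizing $\D_m(N)$. Since $\D_m(N)$ determines the chain count and the adjacency structure among chains, $N'$ is not a single level-$2$ blob (such a network would have exactly three pairwise adjacent chains, a case dispatched by Lemma~\ref{lem:L1}). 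The ``only if'' direction of Lemma~\ref{lem:LESMatrixType} then forces $N'$ to have a leaf on every generator side, and Lemma~\ref{lem:CMV} applied to $N$ and $N'$ — which share the same chains, pairwise adjacent triples, bulbs, and end-leaf adjacency data — yields $N \cong N'$.

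The main subtlety I anticipate is confirming that the ``adjacent twice'' relation needed to identify bulbs is faithfully recoverable from shortest distances alone, since the paper flags that for two length-$1$ chains one cannot in general distinguish one from two adjacencies. The key observation that resolves this is that a petal must have length $\ge 2$ (as $N$ contains no parallel edges), so the two distinct end-leaves of the petal supply two independent distance-$4$ witnesses to the other chain, rendering the twice-adjacency visible in $\D_m(N)$. Beyond this, the argument is bookkeeping: Observations~\ref{obs:Chain} and the definitions of adjacency already give the relevant combinatorial invariants, and Lemmas~\ref{lem:LESMatrixType} and~\ref{lem:CMV} then close the argument.
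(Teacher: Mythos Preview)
Your proposal is correct and follows essentially the same approach as the paper: reduce to the cherry-free, non-single-level-$2$-blob case via Lemma~\ref{lem:L1} and Corollary~\ref{cor:SubtreeReduction}, use Lemma~\ref{lem:LESMatrixType} to force any realization $N'$ of $\D_m(N)$ to also have a leaf on every generator side, extract the chains, pairwise adjacent triples, bulbs, and end-leaf adjacencies from $\D_m(N)$, and conclude $N\cong N'$ via Lemma~\ref{lem:CMV}. Your extra care in arguing that $N'$ cannot be a single level-$2$ blob and in handling the ``adjacent twice'' subtlety for bulbs (petals have length $\ge 2$) is sound; the paper leaves these points implicit. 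One minor quibble: the implication you invoke from Lemma~\ref{lem:LESMatrixType} (all chains satisfy (i)--(iv) $\Rightarrow$ every realization has a leaf on each side) is the \emph{if} direction, not the ``only if'' direction.
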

\begin{proof}
	If~$N$ is a level-$2$ network with a single blob, then~$N$ is 
	reconstructible from its shortest distances by Lemma~\ref{lem:L1}. 
	Therefore we may assume~$N$ is not a level-$2$ network on a single blob, 
	and therefore we may call Lemmas~\ref{lem:LESMatrixType} and~\ref{lem:CMV}.
	
	Let~$N$ be a network with a leaf on each generator side. This means that 
	every chain in~$N$ satisfies \rev{one of} properties~$(i)-(iv)$ of 
	Lemma~\ref{lem:LESMatrixType}.
	As before, let~$\D_m(N)$ be the shortest distance matrix of~$N$.
	Suppose that~$N'$ is 
	another network that realizes~$\D_m(N)$. 
	Because each chain of~$\D_m(N)$ satisfies one of the four 
	properties~$(i)-(iv)$ of
	Lemma~\ref{lem:LESMatrixType},~$N'$ must be a network with a leaf on each 
	generator side. Furthermore, any network realizing~$\D_m(N)$ must contain 
	the same chains as~$N$, by Observation~\ref{obs:Chain}. Therefore~$N$ 
	and~$N'$ 
	have the same chains. To see that~$N$ and~$N'$ also have the same generator 
	vertices, observe that~$N$ and~$N'$ contain the same pairwise adjacent 
	triples and the same bulbs; these can indeed be inferred from chain 
	adjacencies, which can be inferred from~$\D_m(N)$ by definition of adjacent 
	chains. It follows by Lemma~\ref{lem:CMV} that~$N$ and~$N'$ must be 
	isomorphic.
\end{proof}

\section{Level-2 reconstructibility from sl-distance 
matrix}\label{sec:SLDistance}

As was pointed out in \citep{van2020reconstructibility}, level-$2$ networks are 
in general not reconstructible from their induced shortest distance matrix. 
Figure~\ref{fig:level2CE} illustrates two distinct level-$2$ networks on four 
leaves with the same shortest distance matrices (Figure 2 of 
\cite{van2020reconstructibility}).
In this section, we show that level-$2$ networks are reconstructible from their 
sl-distance matrix. 

\subsection{Cut-edges}\label{subsec:Splits}

First, we show that for a level-$2$ network, we may obtain all the cut-edge 
induced splits from its 
shortest distance matrix. Though the section is concerned with sl-distance 
reconstructibility, we show that the shortest distance matrix suffices in 
obtaining the cut-edge induced splits. 

\begin{theorem}\label{thm:CutEdgeSplit}
	All cut-edge induced splits of a level-$2$ network~$N$ may be obtained from 
	its shortest distance matrix~$\D_m(N)$. A split $A|B$ is induced by a 
	cut-edge of~$N$ if 
	and only if for all~$a,a'\in A$ and~$b,b'\in B$,
	\begin{enumerate}[label=(\roman*)]
		\item $d_m(a,b) + d_m(a',b') = d_m(a,b') + d_m(a',b)$; and
		\item $d_m(a,a') + d_m(b,b') \leq d_m(a,b) + d_m(a',b') - 2$.
	\end{enumerate}
\end{theorem}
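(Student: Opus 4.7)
The plan is to prove the biconditional in the theorem statement, which immediately yields the first sentence (the cut-edge splits can be enumerated by checking (i) and (ii) against entries of $\D_m(N)$).

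For the forward direction, suppose $A|B$ is induced by a cut-edge $uv$, with the leaves of $A$ reachable from $u$ and those of $B$ from $v$. Every shortest $a$-$b$ path with $a\in A$ and $b\in B$ must traverse $uv$, so $d_m(a,b)=d_m(a,u)+1+d_m(v,b)$. Substituting this into condition (i) makes both sides equal to $d_m(a,u)+d_m(a',u)+d_m(v,b)+d_m(v,b')+2$, proving (i). For (ii), the triangle inequalities $d_m(a,a')\le d_m(a,u)+d_m(a',u)$ and $d_m(b,b')\le d_m(v,b)+d_m(v,b')$ sum to a bound exactly $2$ less than $d_m(a,b)+d_m(a',b')$, the shortfall coming from the two traversals of the edge $uv$.

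The converse is the substantive direction, which I plan to prove by contrapositive: assuming $A|B$ is not induced by any cut-edge, I exhibit a quadruple $a,a'\in A$, $b,b'\in B$ violating (i) or (ii). Consider the block-cut decomposition of $N$. Because $A|B$ is not a cut-edge split, there must be a blob $B_0$ across which the partition ``cuts'', in the sense that no cut-edge incident to $B_0$ cleanly separates the $A$-leaves reached via $B_0$ from the corresponding $B$-leaves. Using the level-$2$ structure from Section~\ref{sec:Preliminaries}---each blob has two poles joined by three edge-disjoint main paths---I select representative leaves attached to $B_0$ via specific main paths or their pendant subtrees. The case analysis then splits on whether the $A$- and $B$-representatives reach $B_0$ via different main paths, or via the same main path in an interleaved pattern.

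In the first case, the level-$2$ blob admits two competing shortest $a$-$b$ routes around the blob, and by rotating the pairs I expect the optimal route for $(a,b')$ to use the parallel paths differently from that for $(a,b)$, causing the two cross-sums in (i) to be unequal. In the second case, where all shortest $A$-to-$B$ paths funnel through a single common vertex of $B_0$ but leaves of both parts still attach nearby, I would choose $a',b'$ close to that vertex inside $B_0$, forcing $d_m(a,a')+d_m(b,b')$ to exceed the bound allowed by (ii). The main obstacle will be the exhaustive case analysis needed to handle all placements of $a,a',b,b'$ relative to $B_0$ and its three main paths; Lemma~\ref{lem:L1} already covers single-blob level-$2$ networks, and the cherry and chain reductions in Sections~\ref{subsec:Che}--\ref{subsec:Chain} let us assume no cherries, which trims the casework further.
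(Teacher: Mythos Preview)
Your forward direction is correct and matches the paper's argument exactly.

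For the converse, your plan has a genuine gap: the obstruction to $A|B$ being a cut-edge split need not sit inside a blob at all. If the first place where the partition fails to be separated by a cut-edge is a degree-$3$ tree vertex $v$ (three incident cut-edges, with $a$ on one side and elements of $B$ reachable via each of the other two, one of which also reaches $a'$), there is no ``two poles, three main paths'' structure to invoke, yet this is a case that must be handled---and it is handled in the paper as its Case~1, yielding a violation of~(ii). Similarly, the obstruction may lie in a level-$1$ blob, which again has no pole structure; the paper disposes of this with a separate argument about adjacent type-$A$/type-$B$ cut-edges. Your proposed dichotomy (different main paths $\Rightarrow$ (i) fails; same main path interleaved $\Rightarrow$ (ii) fails) is also too coarse: in the paper's analysis almost every configuration is killed by~(ii), and~(i) is needed only in one specific level-$2$ sub-case where two main paths each carry at least two cut-edges of opposite types.

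The paper's organising device is cleaner than a bare block-cut search: fix $a\in A$, take the cut-edge $uv$ \emph{farthest} from $a$ that still separates $a$ from all of $B$, and then analyse the vertex $v$. This guarantees that from $v$ at least two distinct directions reach $B$, which is exactly the leverage needed for the inequalities. Finally, Lemma~\ref{lem:L1} concerns reconstructibility of single-blob networks, not the split characterisation, so it does not shortcut any case here; and the cherry/chain reductions of Sections~\ref{subsec:Che}--\ref{subsec:Chain} are not used in the paper's proof of this theorem and would not obviously simplify it.
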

\begin{proof}
    \rev{The first statement of the theorem follows from the second statement of the theorem.
    Here, we prove the second statement.}

	Let~$N$ be a level-$2$ network on~$X$. Suppose first that~$A|B$ is a split 
	induced by some cut-edge~$uv$ in~$N$. Let~$a,a'\in A$ and~$b,b'\in B$ be 
	arbitrarily chosen. Since every shortest path from a leaf of~$A$ to a leaf 
	of~$B$ contains the edge~$uv$, we must have that
	\begin{align*}
		d_m(a,b) + d_m(a',b') &= d_m(a,u) + d_m(u,b) + d_m(a',u) + d_m(u,b') \\
		&= d_m(a,b') + d_m(a',b).
	\end{align*}
	So property~$(i)$ holds. Since the length of the edge~$uv$ is~$1$, 
	property~$(ii)$ also holds because
	\begin{align*}
		d_m(a,b) + d_m(a',b') &= d_m(a,u) + 1 + d_m(v,b) + d_m(a',u) + 1 + 
		d_m(v,b')\\
		&\ge d_m(a,a') + d_m(b,b') + 2,
	\end{align*}
	where in particular, we obtain equality if there exist a shortest path 
	between~$a$ and~$a'$ and a shortest path between~$b$ and~$b'$ containing 
	the vertices~$u$ and~$v$, respectively.
	
	Now suppose that~$A|B$ \rev{is a partition of the leaf-set of~$N$, such that}
	properties~$(i)$ and~$(ii)$ hold. Let~$a\in A$, and let~$e=uv$ be a 
	cut-edge in~$N$ that is farthest from~$a$, such that~$e$ induces a split 
	which separates~$a$ from~$B$. If~$e$ induces the split~$A|B$, then we are 
	done. So suppose that there exists an~$a'\in A$ such that~$e$ induces a 
	split that separates~$a$ from~$B\cup\{a'\}$ (in particular, we may assume 
	that~$|A|\ge2$ as every trivial split is clearly induced by a cut-edge). 
	Without loss of generality, 
	suppose that~$u$ is closer to~$a$ than to~$v$. We consider several cases 
	(see Figure~\ref{fig:Lemma1Cases} for an illustration of the cases).
	
	\begin{figure}
		\centering
		\begin{subfigure}[t]{.25\textwidth}
			\centering
			\begin{tikzpicture}[every node/.style={draw, circle, fill, inner 
				sep=0pt, 
				minimum size=2mm}]
			\tikzset{edge/.style={very thick}}
			
			\node[] (a)	at (0,0) {} node[draw = none, fill = none, above=1mm of 
			a]{\large $a$};
			\node[] (u) at (0.5,-0.5) {} node[draw = none, fill = none, 
			above=1mm 
			of 
			u]{\large $u$};
			\node[] (v) at (1,-1) {} node[draw = none, fill = none, above=1mm 
			of 
			v]{\large $v$};
			\node[] (w) at (1.5,-0.5) {} node[draw = none, fill = none, 
			above=1mm 
			of 
			w]{\large $w$};
			\node[] (b) at (2,0) {} node[draw = none, fill = none, above=1mm of 
			b]{\large $b$};
			\node[] (x) at (1,-1.5) {} node[draw = none, fill = none, left=1mm 
			of 
			x]{\large $x$}; 
			\node[] (a') at (0.5,-2) {} node[draw = none, fill = none, left=1mm 
			of 
			a']{\large $a'$};	
			\node[] (b') at (1.5,-2) {} node[draw = none, fill = none, 
			right=1mm of 
			b']{\large $b'$};
			
			\draw[edge, dashed] (a) -- (u) (b) -- (w) (x) -- (a') (x) -- (b');
			\draw[edge] (u) -- (v) -- (w) (v) -- (x);
			\end{tikzpicture}
			\caption{Case 1.}
			\label{subcap:1}
		\end{subfigure}
		
		\begin{subfigure}[t]{.25\textwidth}
			\centering
			\begin{tikzpicture}
			[every node/.style={draw, circle, fill, inner sep=0pt, minimum 
				size=2mm}]
			\tikzset{edge/.style={very thick}}
			
			\draw[black, very thick] (0,0) circle (0.5);
			
			\node[] (a) at (0,1) {} 
			node[draw = none, fill = none, above =1mm of a]{\large $a$};
			\node[] (v) at (0,0.5) {} 
			node[draw = none, fill = none, above left =0.5mm and 1mm of 
			v]{\large $v$};
			\node[] (v2) at (210:0.5) {} 
			node[draw = none, fill = none, left=1mm of v2]{\large $v_2$};
			\node[] (b) at ({0.5*cos(30)}, -1.5) {} 
			node[draw = none, fill = none, below=1mm of b]{\large $b$};
			\node[] (v1) at (-30:0.5) {}
			node[draw = none, fill = none, right=1mm of v1]{\large $v_1$};
			\node[] (u2) at ({-0.5*cos(30)}, -1) {} 
			node[draw = none, fill = none, left=1mm of u2]{\large $u_2$};
			\node[] (a') at (-0.5, -1.5) {} 
			node[draw = none, fill = none, below=1mm of a']{\large $a'$};
			\node[] (b') at (0, -1.5) {} 
			node[draw = none, fill = none, below=1mm of b']{\large $b'$};
			
			\draw[edge, dashed] (a) -- (v) 
			(v1) -- (b) 
			(u2) -- (a') 
			(u2) -- (b');
			\draw[edge] 		(v2) -- (u2);
			
			\end{tikzpicture}
			\caption{Case 2. (a), when two leaves from the two partitions ($a'$ 
				and~$b'$) are reachable from the blob via the same cut-edge.}
			\label{subcap:2a}
		\end{subfigure}\hfill
		\begin{subfigure}[t]{.25\textwidth}
			\centering
			\begin{tikzpicture}
			[every node/.style={draw, circle, fill, inner sep=0pt, minimum 
				size=2mm}]
			\tikzset{edge/.style={very thick}}
			
			\draw[black, very thick] (0,0) circle (0.5);
			
			\node[] (a) at ({-0.5*cos(30)},1) {} 
			node[draw = none, fill = none, above =1mm of a]{\large $a$};
			\node[] (b) at ({0.5*cos(30)}, 1) {} 
			node[draw = none, fill = none, above=1mm of b]{\large $b$};
			
			\node[] (a') at ({0.5*cos(30)}, -1) {}
			node[draw=none, fill=none, below=1mm of a']{\large $a'$};
			\node[] (b') at ({-0.5*cos(30)}, -1) {}
			node[draw=none, fill=none, below=1mm of b']{\large $b'$};
			
			\node[] (v1) at (150:0.5) {} 
			node[draw = none, fill = none, left = 1mm of v1]{\large $v_1$};
			\node[] (v2) at (30:0.5) {} 
			node[draw = none, fill = none, right = 1mm of v2]{\large $v_2$};
			\node[] (v3) at (-30:0.5) {} 
			node[draw = none, fill = none, right = 1mm of v3]{\large $v_3$};
			\node[] (v4) at (210:0.5) {} 
			node[draw = none, fill = none, left = 1mm of v4]{\large $v_4$};
			
			\draw[edge, dotted] (a) -- (v1)
			(b) -- (v2)
			(a') -- (v3)
			(b') -- (v4);
			
			\end{tikzpicture}
			\caption{Case 2. (a) i.}
			\label{subcap:2ai}
		\end{subfigure}\hfill
		\begin{subfigure}[t]{.25\textwidth}
			\centering
			\begin{tikzpicture}
			[every node/.style={draw, circle, fill, inner sep=0pt, minimum 
				size=2mm}]
			\tikzset{edge/.style={very thick}}
			
			\draw[black, very thick] (0,0) circle (0.5);
			
			\node[] (a) at ({-0.5*cos(30)},1) {} 
			node[draw = none, fill = none, above =1mm of a]{\large $a$};
			\node[] (b) at ({0.5*cos(30)}, 1) {} 
			node[draw = none, fill = none, above=1mm of b]{\large $b$};
			
			\node[] (a') at ({0.5*cos(30)}, -1) {}
			node[draw=none, fill=none, below=1mm of a']{\large $b'$};
			\node[] (b') at ({-0.5*cos(30)}, -1) {}
			node[draw=none, fill=none, below=1mm of b']{\large $a'$};
			
			\node[] (v1) at (150:0.5) {} 
			node[draw = none, fill = none, left = 1mm of v1]{\large $v_1$};
			\node[] (v2) at (30:0.5) {} 
			node[draw = none, fill = none, right = 1mm of v2]{\large $v_2$};
			\node[] (v3) at (-30:0.5) {} 
			node[draw = none, fill = none, right = 1mm of v3]{\large $v_3$};
			\node[] (v4) at (210:0.5) {} 
			node[draw = none, fill = none, left = 1mm of v4]{\large $v_4$};
			
			\draw[edge, dotted] (a) -- (v1)
			(b) -- (v2)
			(a') -- (v3)
			(b') -- (v4);
			
			\end{tikzpicture}
			\caption{Case 2. (a) ii.}
			\label{subcap:2aii}
		\end{subfigure}
		
		\begin{subfigure}[t]{.25\textwidth}
			\centering
			\begin{tikzpicture}
			[every node/.style={draw, circle, fill, inner sep=0pt, minimum 
				size=2mm}]
			\tikzset{edge/.style={very thick}}
		
			\draw[black, very thick, domain=45:135] plot ({cos(\x)}, {sin(\x)});
			\draw[black, very thick, domain=135:170, dotted] plot ({cos(\x)}, 
			{sin(\x)});
			\draw[black, very thick, domain=170:190] plot ({cos(\x)}, 
			{sin(\x)});
			\draw[black, very thick, domain=-170:45, dotted] plot ({cos(\x)}, 
			{sin(\x)});
			
			\node[] (north) at (0,1) {};
			\node[] (south) at (0,-1) {};
			
			\node[] (v4) at (135:1) {}
			node[draw = none, fill = none, above = 1mm of v4]{\large $v_4$};
			\node[] (v2) at (170:1) {}
			node[draw = none, fill = none, right = 1mm of v2]{\large $v_2$};
			\node[] (v1) at (190:1) {}
			node[draw = none, fill = none, right = 1mm of v1]{\large $v_1$};
			\node[] (v3) at (45:1) {}
			node[draw = none, fill = none, above = 1mm of v3] {\large $v_3$};
			
			\node[] (b') at (-1.5, {sin(135)}) {}
			node[draw = none, fill = none, left = 1mm of b']{\large $b'$};
			\node[] (b) at (-1.5, {sin(170)}) {}
			node[draw = none, fill = none, left = 1mm of b]{\large $b$};
			\node[] (a) at (-1.5, {sin(190)}) {}
			node[draw = none, fill = none, left = 1mm of a]{\large $a$};
			\node[] (a') at (1.5, {sin(45)}) {}
			node[draw = none, fill = none, right = 1mm of a']{\large $a'$};	
			
			\draw[edge, dotted] (v4) -- (b')
			(v2) -- (b)
			(v1) -- (a)
			(v3) -- (a')
			(north) -- (south);
			
			\end{tikzpicture}
			\caption{Case 2. (b) i. The bottom-left dashed edge is a potential 
				type-$B$ cut-edge. This edge may or may not exist in the 
				network 
				(does 
				not affect the case).}
			\label{subcap:2bi}
		\end{subfigure}\hfill
		\begin{subfigure}[t]{.25\textwidth}
			\centering
			\begin{tikzpicture}
			[every node/.style={draw, circle, fill, inner sep=0pt, minimum 
				size=2mm}]
			\tikzset{edge/.style={very thick}}
			
			\draw[black, very thick, domain=45:135] plot ({cos(\x)}, {sin(\x)});
			\draw[black, very thick, domain=135:225, dotted] plot ({cos(\x)}, 
			{sin(\x)});
			\draw[black, very thick, domain=225:315] plot ({cos(\x)}, 
			{sin(\x)});
			\draw[black, very thick, domain=-45:45, dotted] plot ({cos(\x)}, 
			{sin(\x)});
						
			\node[] (north) at (0,1) {};
			\node[] (south) at (0,-1) {};
			
			\node[] (v1) at (135:1) {}
			node[draw = none, fill = none, above = 1mm of v1]{\large $v_1$};
			\node[] (v2) at (45:1) {}
			node[draw = none, fill = none, above = 1mm of v2]{\large $v_2$};
			\node[] (v3) at (225:1) {}
			node[draw = none, fill = none, below = 1mm of v3]{\large $v_3$};
			\node[] (v4) at (315:1) {}
			node[draw = none, fill = none, below = 1mm of v4]{\large $v_4$};
			
			\node[] (a) at (-1.5, {sin(135)}) {}
			node[draw = none, fill = none, left = 1mm of a]{\large $a$};
			\node[] (a') at (-1.5, {sin(225)}) {}
			node[draw = none, fill = none, left = 1mm of a']{\large $a'$};
			\node[] (b) at (1.5, {sin(45)}) {}
			node[draw = none, fill = none, right = 1mm of b]{\large $b$};
			\node[] (b') at (1.5, {sin(315)}) {}
			node[draw = none, fill = none, right = 1mm of b']{\large $b'$};
			
			\draw[edge, dotted] (v1) -- (a)
			(v3) -- (a')
			(v2) -- (b)
			(v4) -- (b')
			(north) -- (south); 
			
			\node[draw=none, fill=none] (a1) at (-1.5, {sin(160)}) {};
			\node[draw=none, fill=none] (a2) at (-1.5, {sin(200)}) {};
			
			\node[draw=none, fill=none] (b1) at (1.5, {sin(20)}) {};
			\node[draw=none, fill=none] (b2) at (1.5, {sin(-20)}) {};
			
			\end{tikzpicture}
			\caption{Case 2. (b) ii. A.}
			\label{subcap:2biiA}
		\end{subfigure}\hfill
		\begin{subfigure}[t]{.25\textwidth}
			\centering
			\begin{tikzpicture}
			[every node/.style={draw, circle, fill, inner sep=0pt, minimum 
				size=2mm}]
			\tikzset{edge/.style={very thick}}
			
			\draw[black, very thick, domain=45:315] plot ({cos(\x)}, {sin(\x)});
			\draw[black, very thick, domain=-45:45, dotted] plot ({cos(\x)}, 
			{sin(\x)});
			
			\node[] (north) at (0,1) {};
			\node[] (south) at (0,-1) {};
			
			\node[] (v1) at (180:1) {}
			node[draw = none, fill = none, above left = 1mm and 0.5mm of 
			v1]{\large 
				$v_1$};
			\node[] (v2) at (45:1) {}
			node[draw = none, fill = none, above = 1mm of v2]{\large $v_2$};
			\node[] (v3) at (0,0) {}
			node[draw = none, fill = none, right = 1mm of v3]{\large $v_3$};
			\node[] (v4) at (315:1) {}
			node[draw = none, fill = none, below = 1mm of v4]{\large $v_4$};
			
			\node[] (a) at (-1.5, {sin(180)}) {}
			node[draw = none, fill = none, left = 1mm of a]{\large $a$};
			\node[] (a') at (-0.5, {sin(0)}) {}
			node[draw = none, fill = none, above = 1mm of a']{\large $a'$};
			\node[] (b) at (1.5, {sin(45)}) {}
			node[draw = none, fill = none, right = 1mm of b]{\large $b$};
			\node[] (b') at (1.5, {sin(315)}) {}
			node[draw = none, fill = none, right = 1mm of b']{\large $b'$};
			
			\draw[edge, dotted] (v1) -- (a)
			(v3) -- (a');
			\draw[edge, dotted] (v2) -- (b)
			(v4) -- (b');
			\draw[edge] (north) -- (south); 
			
			\node[draw=none, fill=none] (b1) at (1.5, {sin(20)}) {};
			\node[draw=none, fill=none] (b2) at (1.5, {sin(-20)}) {};
			
			\end{tikzpicture}
			\caption{Case 2. (b) ii. B.}
			\label{subcap:2biiB}
		\end{subfigure}
		\caption{All cases examined in the proof of 
		Theorem~\ref{thm:CutEdgeSplit}. 
			The dotted edges represent a path between the two 
			vertices.}
		\label{fig:Lemma1Cases}
	\end{figure}
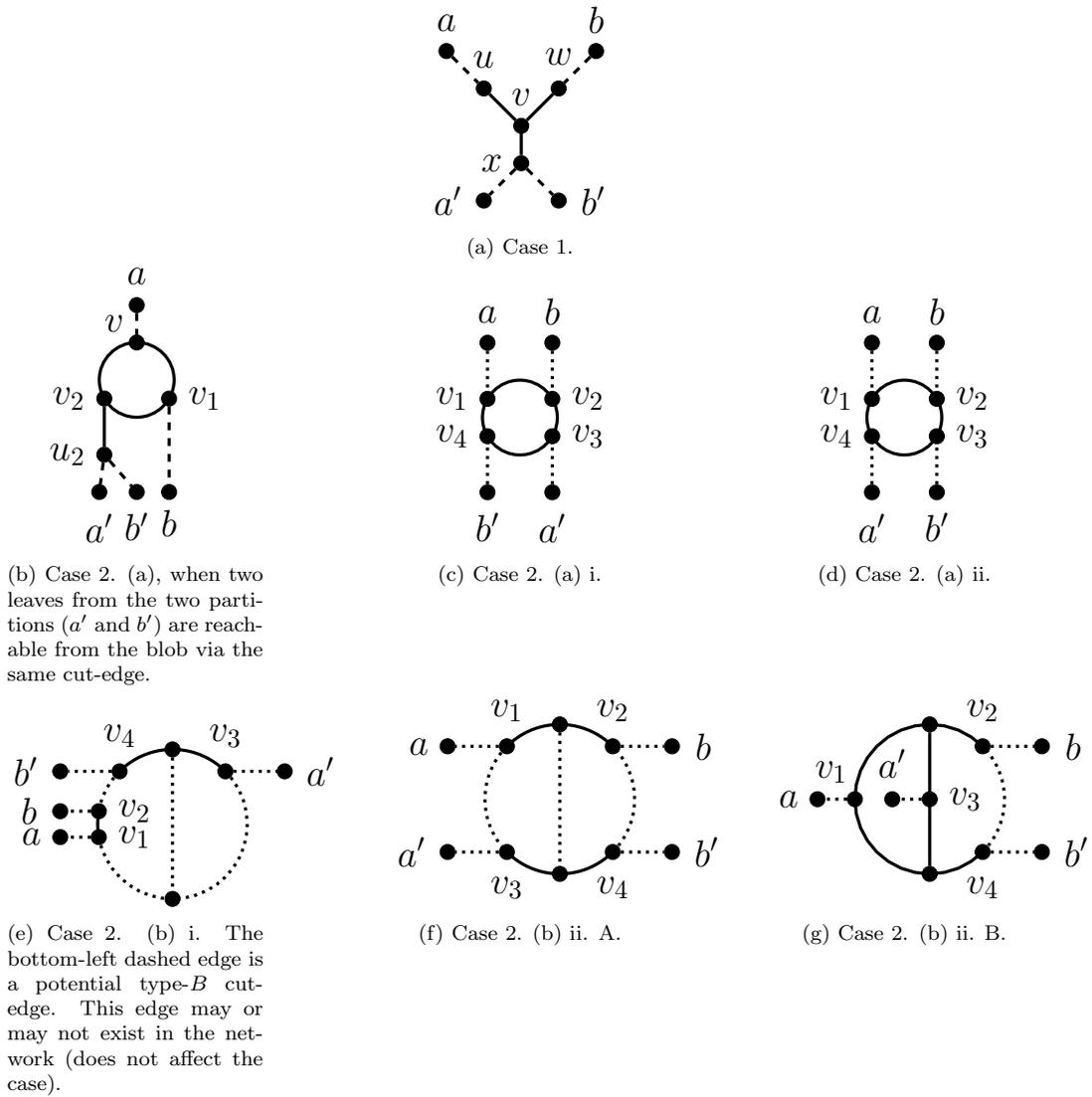
	
	\begin{enumerate}
		\item \textbf{$\bm{v}$ is not in a blob:} 
		Let~$w,x$ denote the two 
		neighbors of~$v$ that are not~$u$. By our choice of~$e$, there 
		must be a leaf~$b\in B$ that can be reached from the 
		edge~$vw$, and a leaf~$b'\in B$ that can be reached from the edge~$vx$. 
		Without loss of generality, assume that~$a'$ can be 
		reached from the edge~$vx$. But this means that
		\begin{align*}
		d_m(a,b) + d_m(a',b') &\le d_m(a,v) + d_m(v,b) + d_m(a',x) + d_m(x,b')\\
		&= [d_m(a,v) + d_m(v,x) + d_m(a',x)] - d_m(v,x)\\
		&\quad + [d_m(b,v) + d_m(v,x) + d_m(x,b')] - d_m(v,x)\\
		&= d_m(a,a') - d_m(v,x) + d_m(b,b') - d_m(v,x)\\
		&= d_m(a,a') + d_m(b,b') - 2\\
		&< d_m(a,a') + d_m(b,b'),
		\end{align*}
		where the first inequality may be strict since the shortest path 
		between~$a'$ and~$b'$ may not pass through~$x$. This contradicts the 
		second 
		condition of the claim.
		\item \textbf{$\bm{v}$ is a vertex of a blob~$\bm{C}$:} The blob~$C$ 
		must be 
		incident to at least two cut-edges~$e_1, e_2$ other than~$uv$, for 
		which there must be elements~$b$ and~$b'$ in~$B$ that are reachable 
		from~$e_1$ and~$e_2$ respectively. Otherwise, as before, this would 
		contradict our choice of a farthest~$uv$. We claim that if~$a'$ can be 
		reached from either~$e_1$ or~$e_2$, then we would reach a 
		contradiction. Without loss of generality, suppose that~$a'$ can be 
		reached from~$e_2$. Letting~$e_1=u_1v_1$ and~$e_2 = u_2v_2$ where~$v_1$ 
		and~$v_2$ are vertices on~$C$, we have that
		\begin{align*}
		d_m(a,b) + d_m(a',b') &< d_m(a,v) + d_m(v,v_1) + d_m(v_1,b) + 
		d_m(a',v_2) + d_m(v_2,b')\\
		&= d_m(a,a') - d_m(v,v_2) + d_m(b,b') - d_m(v_1,v_2) + d_m(v,v_1)\\
		&\le d_m(a,a') + d_m(b,b'),
		\end{align*}
		where the first inequality follows as the shortest path between~$a'$ 
		and~$b'$ does not contain~$v_2$, and the final inequality follows from 
		the triangle inequality. This contradicts the second condition of the 
		claim. Therefore, we may assume from now that there are at least four 
		cut-edges incident to the blob~$C$ and that no leaves from~$A$ and~$B$ 
		can be reached from the same cut-edge incident to~$C$. That is, every 
		cut-edge incident to~$C$ induces a split that has either a subset 
		of~$A$ or a subset of~$B$ as one of its parts. We refer to 
		these as \emph{type-$A$ 
		cut-edges} and \emph{type-$B$ cut-edges}, respectively. 
	
		We have another case that is common both for the instances when~$C$ is 
		either a level-$1$ or a level-$2$ blob. Suppose first 
		that there exist two pairs of cut-edges~$e_1, e_2$, and~$e_3,e_4$ 
		incident 
		to~$C$, whose endpoints are adjacent, respectively, 
		such that all four edges are distinct and~$a,b,a',b'$ are reachable 
		from $e_1,e_2,e_3,e_4$ respectively. We let~$v_i$ denote the 
		vertices of~$C$ that are endpoints of~$e_i$ for~$i=1,2,3,4$, 
		respectively. Then we have
		\begin{align*}
		d_m(a,b) + d_m(a',b') &= d_m(a,v_1) + d_m(v_1,v_2) + d_m(v_2,b) + 
		d_m(a',v_3)\\
		&\quad + d_m(v_3,v_4) + d_m(v_4,b')\\
		&= d_m(a,a') + d_m(b,b') - d_m(v_1,v_3) - d_m(v_2,v_4) + 2\\
		&\le d_m(a,a') + d_m(b,b'),
		\end{align*}
		where the second equality follows as~$d_m(v_1,v_2) = d_m(v_3,v_4) = 
		1$. This contradicts the second inequality of the 
		claim. 
		
		If~$C$ is a level-$1$ blob, then the above case always 
		applies. Indeed, there must be at least four cut-edges incident to~$C$, 
		of which at least two are type-$A$ and the remaining edges are 
		type-$B$. 
		\rev{If there was only one type-$B$ edge, then such a cut-edge induces the split~$A|B$, and we are done.}
		\rev{So} this implies that there are always two distinct pairs of  
		type-$A$ and type-$B$ edges, whose endpoints on~$C$ are adjacent.
		Thus we may assume that~$C$ is a level-$2$ blob.
	
		We may assume that each main path of $C$ contains only 
		type-$A$ cut-edges or only type-$B$ cut-edges, or a combination of 
		the two, for which such a main path contains one type of cut-edges, 
		a single cut-edge of the other type, and possibly cut-edges of the 
		first type. For example, a path corresponding to a main path of~$B$ 
		may be~$e_0v_1\cdots v_ke_1$ where~$k\ge2$ and~$e_0,e_1$ are 
		boundary vertices. For some integer~$j\le k$, we have $v_1,v_2, 
		\ldots, v_{j-1}, v_{j+1}, \ldots, v_k$ are incident to type-$A$ 
		cut-edges, and~$v_j$ is incident to a type-$B$ cut-edge. We call 
		such a main path a \emph{combination side}. Observe that a 
		combination side contains either one type-$A$ or one type-$B$ 
		cut-edge. Also note that the blob~$C$ contains at most one 
		combination side as otherwise there would be two distinct pairs of 
		type-$A$ and type-$B$ edges, whose endpoints on~$C$ are adjacent.
		\begin{enumerate}
			\item \textbf{$\bm{C}$ contains one combination side~$s$:}
			Suppose without loss of generality that~$s$ is a combination 
			side containing exactly one type-$A$ cut-edge. Let~$v_1$ denote 
			the endpoint of this cut-edge on~$C$, and let~$v_2$ be an 
			adjacent vertex on~$C$ that is incident to a type-$B$ cut-edge. 
			Since~$C$ is incident to at least two type-$A$ cut-edges, there 
			must be another main path~$s'$ of~$C$ that is incident to only 
			type-$A$ cut-edges. Similarly, since~$C$ is incident to at 
			least two type-$B$ cut-edges, there must be another type-$B$ 
			cut-edge~$e_4$ that is incident to~$C$. We may assume in 
			particular that an endpoint~$v_4$ of~$e_4$ is a main end-spine 
			vertex incident either to~$s$ or to the third main path of~$C$. 
			Either way, there must exist an end-spine vertex~$v_3$ on~$s'$ 
			such that~$d_m(v_3,v_4) = 2$. Observing that~$v_3$ is an 
			endpoint of a type-$A$ cut-edge, we may assume that the 
			leaves~$a,b,a',b'$ are separated from~$C$ by~$v_1,v_2,v_3,v_4$, 
			respectively. Then,
			\begin{align*}
			d_m(a,b) + d_m(a',b') -2 &= d_m(a,v_1) + d_m(v_1,v_2) + 
			d_m(v_2,b) + d_m(a',v_3)\\
			&\quad + d_m(v_3,v_4) + d_m(v_4,b') - 2\\
			&= d_m(a,a') + d_m(b,b') + d_m(v_1,v_2) + d_m(v_3,v_4) \\
			&\quad - d_m(v_1,v_3) - d_m(v_2,v_4)-2\\
			&\le d_m(a,a') + d_m(b,b') + 1+2-2 - d_m(v_1,v_3)\\
			&\quad - d_m(v_2,v_4)\\
			&< d_m(a,a') + d_m(b,b'),
			\end{align*} 
			since~$d_m(v_1,v_3)\ge 1$ and~$d_m(v_2,v_4)\ge 1$. This
			leads to a contradiction of the second condition.
			\item \textbf{\rev{Each} main path of~$\bm{C}$ contain cut-edges of 
				the same type:} Observe that at least one main path must 
			contain at least two cut-edges, since there are at least four 
			cut-edges incident to~$C$ and~$C$ has three main paths. Without 
			loss of generality, suppose that there is a main path~$s$ with 
			at least~$2$ type-$B$ edges.
			\begin{enumerate}
				\item \textbf{There is another main path~$s'$ with at least 
					two type-$\bm{A}$ edges:} Then choose~$v_1,v_3$ 
				and~$v_2,v_4$ to be the main end-spine vertices of~$s'$ 
				and~$s$, respectively, such that~$d_m(v_1,v_2) = 2$ 
				and~$d_m(v_3,v_4) = 2$. Supposing that the 
				leaves~$a,b,a',b'$ are separated from~$C$ 
				by~$v_1,v_2,v_3,v_4$, respectively, we have that
				\begin{align*}
				d_m(a,b) + d_m(a',b') &= d_m(a',b) + d_m(a,b') - 
				d_m(v_1,v_4) - 
				d_m(v_2,v_3)\\
				&\quad + d_m(v_1,v_2) + d_m(v_3,v_4)\\
				&\le d_m(a',b) + d_m(a,b') - 3 - 3 + 2 + 2\\
				&< d_m(a',b) + d_m(a,b'),
				\end{align*}
				since~$d_m(v_1,v_4)\ge 3$ and~$d_m(v_2,v_3)\ge 3$, where 
				this inequality is strict when the third main path contains 
				no cut-edges. This clearly contradicts the first condition 
				of the claim.
				\item \textbf{The other two main paths contain exactly one 
					type-$\bm{A}$ edge each:} Choose~$v_1,v_3$ to be the two 
				possible vertices incident to the type-$A$ cut-edges, 
				and~$v_2,v_4$ to be the main end-spine vertices of~$s$. 
				Supposing that the leaves~$a,b,a',b'$ are separated 
				from~$C$ by~$v_1,v_2,v_3,v_4$, respectively, we have that
				\begin{align*}
				d_m(a,b) + d_m(a',b') &= d_m(a,a') + d_m(b,b') + 
				d_m(v_1,v_2) + d_m(v_3,v_4) - d_m(v_1,v_3) - d_m(v_2,v_4)\\
				&= d_m(a,a') + d_m(b,b') + 4 - 2 - d_m(v_2,v_4)\\
				&< d_m(a,a') + d_m(b,b') +2,
				\end{align*}
				which contradicts the second condition of the claim.
			\end{enumerate}
		\end{enumerate}
	\end{enumerate}
	This covers all possible cases, for which we have obtained a contradiction 
	in each case. Therefore~$A|B$ must be a cut-edge induced split of the 
	network.
\end{proof}

Note that Theorem~\ref{thm:CutEdgeSplit} does not hold for networks of level at 
least~$3$ (see Figure~\ref{fig:CutEdgeSplitL3}). 
Let us call a split~$A|B$ 
\emph{minimal} if there exists no non-trivial split~$A'|B'$ of the same network 
such that \rev{one of~$A'$ and~$B'$ is a proper subset of~$A$ or~$B$.}
We say that~$A$ and~$B$ are \emph{minimal parts} 
of~$A|B$, respectively. Note that minimal parts of a split may not be unique as 
two pendant blobs may be connected by a non-trivial cut-edge~$e$, for which 
both parts of the split are minimal parts.

\begin{lemma}\label{lem:MinimalPendant}
	Let~$N$ be a level-$2$ network \rev{on~$X$} with at least two pendant blobs.
	Then~$N$ contains a pendant blob containing the set of leaves~$A$ if and 
	only if~$A|B$ is a minimal cut-edge induced non-trivial split where~$A$ is 
	a minimal part.
\end{lemma}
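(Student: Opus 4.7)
The plan is to prove both directions by carefully analysing the structure of the subnetwork on the $A$-side of the inducing cut-edge.

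For the forward direction, let $P$ be a pendant blob of $N$ whose set of contained leaves is $A$, and let $e$ be the unique non-trivial cut-edge incident to $P$. I would first observe that $e$ induces the split $A\mid B$, and that the split is non-trivial: $|A|\ge 2$ follows from the structural description of pendant blobs given in Figure~\ref{fig:PBlobs}, and $|B|\ge 2$ from the hypothesis that $N$ has at least two pendant blobs (so $B$ contains the leaves of some other pendant blob). To verify that $A$ is a minimal part, I would note that on the $P$-side of $e$ the only cut-edges are the trivial leaf edges attaching each leaf of $A$ to a vertex of $P$, because $P$ itself is $2$-connected and its leaves hang off it directly. Consequently no non-trivial cut-edge can separate leaves inside $A$, so no non-trivial cut-edge induced split has a part properly contained in $A$.

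For the reverse direction, let $e$ be the cut-edge inducing $A\mid B$ and let $N_A$ denote the connected component on the $A$-side after removing $e$. My strategy is to show that $N_A$ consists of a single blob with all leaves of $A$ attached to it via trivial edges, from which the conclusion follows. First I would argue $N_A$ must contain a blob: otherwise $N_A$ is a tree, and since $|A|\ge 2$ and $N$ is assumed cherry-free by subtree reduction, $N_A$ would have at least one internal non-trivial cut-edge, yielding a cut-edge induced split with one part properly contained in $A$, contradicting minimality. Next, since $N$ is binary, two distinct blobs cannot share a vertex (such a vertex would have degree at least $4$), so any two blobs lying in $N_A$ would be joined by a non-trivial cut-edge path inside $N_A$, again contradicting minimality. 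Finally, if any leaf of $A$ were separated from the unique blob of $N_A$ by a non-trivial cut-edge (i.e.\ via a non-trivial pendant subtree), minimality of $A$ would once more be violated. Hence every leaf of $A$ is attached directly to this blob via a trivial edge. The blob therefore has $e$ as its only incident non-trivial cut-edge in all of $N$, making it a pendant blob whose contained leaf set is exactly $A$.

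The main obstacle is the structural analysis in the reverse direction: carefully leveraging the minimality of $A$ to rule out (i) multiple blobs in $N_A$ and (ii) any non-trivial tree-like attachment between a blob in $N_A$ and the leaves of $A$. Both rely on the fact that in a binary network distinct blocks cannot share a vertex, which I expect to be the most delicate point to articulate cleanly. The level-$2$ hypothesis itself plays no serious role in this argument; it enters only insofar as it restricts the possible pendant blobs via Figure~\ref{fig:PBlobs} and guarantees the cherry-free reduction, so I anticipate the proof will work uniformly across all the pendant blob shapes listed there.
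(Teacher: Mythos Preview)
Your proposal is correct and takes essentially the same approach as the paper. The paper's reverse direction is packaged as a one-line contradiction (the cherry-free component $N_A$ must contain \emph{some} pendant blob $C$ with leaf set $A'\subseteq A$, and $A'\subsetneq A$ would yield a smaller non-trivial split), while you unfold the same idea into an explicit structural description of $N_A$; both arguments rest on exactly the same two ingredients, cherry-freeness and the minimality of $A$.
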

\begin{proof}
	Suppose first that~$N$ contains a pendant blob~$C$ containing the set of 
	leaves~$A$. Then there exists exactly one non-trivial cut-edge~$e$ incident 
	to~$C$, which induces the non-trivial split~$A|B$ (where~$B=X-A$). To see 
	that~$A$ is a minimal part, observe that for every cut-edge induced 
	split~$A'|B'$ where~$A'\subseteq A$, we have~$|A'| = 1$, since every
	cut-edge incident to~$C$ other than~$e$ is trivial. Therefore,~$A|B$ is a 
	minimal split, where~$A$ is a minimal part.
	
	Suppose now that~$A|B$ is a minimal non-trivial split induced by~$e$,
	where~$A$ is a minimal part. Suppose for a contradiction that~$N$ did not 
	contain a pendant blob with the set of leaves~$A$. Because we may 
	assume~$N$ contains no cherries, the part of~$N$ corresponding to the split 
	part~$A$ (i.e., the graph obtained by deleting~$e$ and taking the component 
	with the leaves from~$A$) must contain a pendant blob~$C$. Such a pendant 
	blob contains the set of leaves~$A'$, where~$A'\subseteq A$. The 
	non-trivial 
	cut-edge incident to~$C$ induces the split~$A'|B'$, where~$B' = X-A'$. By 
	definition,~$A'|B'$ must be a non-trivial split. But this contradicts the 
	fact that~$A|B$ was minimal. Therefore,~$N$ must contain a pendant blob 
	with the set of leaves~$A$.
\end{proof}

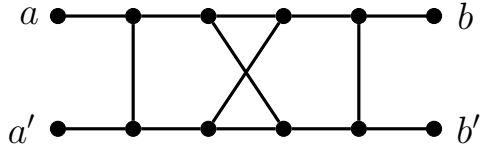
\begin{figure}
	\centering
	\begin{tikzpicture}[every node/.style={draw, circle, fill, inner sep=0pt, 
	minimum 
		size=2mm}]
	\tikzset{edge/.style={very thick}}
	
	\foreach \x in {0,...,5}
	\foreach \y in {0,1}
	\node[] (\x\y) at (-3+\x, -1.5*\y) {};
	
	\foreach \x [count = \xi] in {0,...,4}
	\foreach \y in {0,1}
	\draw[edge] (\x\y) -- (\xi\y);
	
	\draw[edge] (10) --  (11);
	\draw[edge] (20) -- (31);
	\draw[edge] (30) -- (21);
	\draw[edge] (40) -- (41);
	
	\node[draw = none, fill = none, left=1mm of 00]  	(1) {\large $a$};
	\node[draw = none, fill = none, left=1mm of 01]  	(2) {\large $a'$};
	\node[draw = none, fill = none, right=1mm of 50]  	(3) {\large $b$};
	\node[draw = none, fill = none, right=1mm of 51]  	(4) {\large $b'$};
	\end{tikzpicture}
	\caption{A level-$3$ network on leaf-set~$\{a,a',b,b'\}$. Observe that the 
		conditions for Theorem~\ref{thm:CutEdgeSplit} are satisfied 
		for~$A=\{a,a'\}$ 
		and~$B=\{b,b'\}$, but there is no cut-edge that induces the 
		split~$A|B$.}
	\label{fig:CutEdgeSplitL3}
\end{figure}

\subsection{sl-distance reconstructibility}

We show now that we can identify pendant blobs of level-$2$ networks from 
their sl-distance matrices.

\begin{lemma}\label{lem:IdentifyBlobsFromSplits}
	Let~$N$ be a level-$2$ network on~$X$ with at least two blobs. Let~$A|B$ be 
	a non-trivial split of~$N$ 
	\rev{where~$A$ is the minimal part.
	Then~$N$ contains a pendant blob containing the set of leaves~$A$, and} if~$A$ contains
	\begin{itemize}
		\item $1$ chain~$(a,k)$, then~$N$ contains
		\begin{itemize}
			\item a pendant level-$1$ blob containing~$(a,k)$ if and only if
			\begin{itemize}
				\item $2\le k\le 3$,~$d_m(a_1,a_k) = k+1$, and~$d_l(a_1,a_k) = 
				4$.
				\item $k\ge 4$ and~$d_m(a_1,a_k) = 4$.
			\end{itemize}
			\item a pendant level-$2$ blob of the form~$(a,0,0,0)$ if and only 
			if
			\begin{itemize}
				\item $2\le k\le 3$,~$d_m(a_1,a_k) = k+1$, and~$d_l(a_1,a_k) = 
				6$.
				\item $k\ge 4$ and~$d_m(a_1,a_k) = 5$.
			\end{itemize}
		\end{itemize}
		\item $2$ chains~$(a,k)$ and~$(b,\ell)$, then~$N$ contains
		\begin{itemize}
			\item a pendant level-$2$ blob of the form~$(a,b,0,0)$ if and 
			only if for all~$x\in X-(a\cup b)$, we have~$d_m(a,x) = d_m(b,x)$.
			\item a pendant level-$2$ blob of the form~$(a,0,b,0)$ if and 
			only if for all~$x\in X-(a\cup b)$, we have~$d_m(a,x) = d_m(b,x)+1$.
		\end{itemize}
		\item $3$ chains~$(a,k), (b,\ell),$ and~$(c,m)$, then~$N$ contains
		\begin{itemize}
			\item a pendant level-$2$ blob of the form~$(a,b,c,0)$ if and 
			only if for all~$x\in X-(a\cup b\cup c)$, we have~$d_m(a,x) = 
			d_m(b,x) = d_m(c,x) + 1$.
			\item a pendant level-$2$ blob of the form~$(a,0,b,c)$ if and 
			only if for all~$x\in X-(a\cup b\cup c)$, we have~$d_m(a,x) = 
			d_m(b,x) + \min\{\ell,m\}+1 = d_m(c,x) + \min\{\ell,m\} + 1$.
		\end{itemize}
		\item $4$ chains~$(a,k), (b,\ell), (c,m),$ and~$(d,n)$ then~$N$ 
		contains a pendant level-$2$ blob of the form~$(a,b,c,d)$ if and 
		only if~$(a,b,c)$ and~$(a,b,d)$ are both pairwise adjacent triples.
	\end{itemize}
\end{lemma}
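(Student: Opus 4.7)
The plan is to apply Lemma~\ref{lem:MinimalPendant} first, which directly yields that~$N$ contains a pendant blob~$C$ whose leaf set is exactly~$A$. The rest of the proof then reduces to characterizing, for each of the four cases on the number of chains in~$A$, which of the seven pendant blob forms in Figure~\ref{fig:PBlobs} the blob~$C$ realizes, based on the sl-distance matrix.

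For each case, I would enumerate the candidate forms and compute the relevant sl-distances for each candidate. In the~$1$-chain case the only two candidates are the pendant level-$1$ blob carrying chain~$(a,k)$ and the pendant level-$2$ blob of form~$(a,0,0,0)$. Writing out the shortest and longest paths between~$a_1$ and~$a_k$ gives piecewise formulas: the path runs along the chain spine (length~$k+1$) or detours around the remaining side(s) of the blob (length~$4$ in the level-$1$ case, length~$5$ or~$6$ in the level-$2$ case, depending on which extra main path is used). The ranges~$k\le3$ versus~$k\ge4$ in the statement are exactly where the spine and detour routes switch dominance, and comparing the resulting pairs~$(d_m,d_l)$ across the two candidate forms shows that they are distinguishable.

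For the~$2$-chain and~$3$-chain cases, the key observation is how distances from leaves of~$A$ to an external leaf~$x\in X-A$ depend on which main path each chain lies on. A chain on a non-cut-edge main path must first reach a pole and then traverse the cut-edge main path to the cut-edge vertex before exiting the blob, whereas a chain already on the cut-edge main path needs no such detour. Two chains lying on the two non-cut-edge main paths are symmetric with respect to every external leaf (this gives the equality~$d_m(a,x)=d_m(b,x)$ for the form~$(a,b,0,0)$ and similarly for the pair~$\{a,b\}$ in~$(a,b,c,0)$), while a chain on the cut-edge main path is strictly closer. For the form~$(a,0,b,c)$, the shortest route from a leaf of~$a$ to~$x$ can go through either pole and then along the cut-edge main path to the cut-edge vertex, and the better choice traverses whichever of chains~$b$ or~$c$ is shorter; this is where the~$\min\{\ell,m\}+1$ shift arises and where the most delicate bookkeeping is required. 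For each case I would then argue the converse by noting that the stated distance patterns are mutually exclusive across the different candidate forms, so that the pattern determines the form.

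For the~$4$-chain case the only candidate form is~$(a,b,c,d)$, so the task reduces to identifying, up to labeling, which pair of chains lies on the two non-cut-edge main paths. Each pole of the blob is a generator vertex incident to three sides: the two non-cut-edge sides (carrying chains~$a$ and~$b$) and the portion of the cut-edge main path between the pole and the cut-edge vertex (carrying one of~$c$ or~$d$). This encodes the two pairwise adjacent triples~$(a,b,c)$ and~$(a,b,d)$, giving the stated condition. For the converse, I would argue that the pair~$\{a,b\}$ is characterized as the unique pair of chains in~$A$ that occurs in two distinct pairwise adjacent triples within~$A$, which forces~$a$ and~$b$ onto the non-cut-edge main paths. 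The main obstacle throughout is the careful distance bookkeeping for the forward direction in the~$3$-chain~$(a,0,b,c)$ case, and ruling out the alternative blob forms in each converse direction.
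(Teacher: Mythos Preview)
Your proposal is correct and follows essentially the same approach as the paper: invoke Lemma~\ref{lem:MinimalPendant} to get the pendant blob, then case-split on the number of chains in~$A$ and distinguish the candidate forms by computing the relevant sl-distances (the paper dispatches the forward direction by inspection of Figure~\ref{fig:PBlobs} and illustrates the converse only for the three-chain case, whereas you spell out the distance computations more explicitly). Your discussion of the spine-versus-detour crossover at~$k=4$ and of the~$\min\{\ell,m\}+1$ shift in the~$(a,0,b,c)$ case is in fact more detailed than what the paper provides.
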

\begin{proof}
    \rev{The fact that~$N$ contains a pendant blob containing the set of leaves~$A$ follows from Lemma~\ref{lem:MinimalPendant}.}
    
	Suppose first that~$N$ contains 
	\rev{either a pendant level-$1$ blob or a pendant level-$2$ blob of the form~$(a,b,c,d)$, where $a,b,c,d$ could be empty chains.}
	Then it is 
	easy to see by inspection that these distances hold and also that the 
	pairwise adjacent triple statement holds in the case of~$4$ chains (see 
	Figure~\ref{fig:PBlobs}).
	\medskip
	
	To show the other direction, note that within a level-$2$ network, there is 
	one level-$1$ pendant blob, and there are six possible level-$2$ \rev{pendant} blobs. 
	\rev{
	We know that~$N$ contains a pendant blob with the leaves of~$A$; it remains to show that if the conditions on the distances are satisfied, then~$N$ must contain the corresponding pendant blob.
	From the sl-distance matrix, we can infer the number of distinct chains contained in~$A$, as well as their adjacencies.
	Then, we can infer the type of this pendant blob by looking at the distance from the leaves of~$A$ to some leaf that is not in~$A$.
	We give one example here for the case when~$A$ consists of exactly three chains.
	The proof for the other cases follow in an analogous fashion.}
	
	We give a proof for the case when~$A$ contains~$3$ 
	chains~$(a,k),(b,\ell),$ and~$(c,m)$. Pendant level-$1$ blobs 
	contain exactly~$1$ chain; thus the pendant blob must be level-$2$. 
	Level-$2$ pendant blobs have three main paths, one of which contains the 
	endpoint of the incident non-trivial cut-edge. This main path, say~$s$,  
	contains at least~$1$ chain and at most~$2$ chains, whilst the other two 
	main paths contain at most~$1$ chain. Let~$x\in X-a\cup b\cup c$ be an 
	arbitrary leaf. Two of the chains, say~$a$ and~$b$, have the same minimal 
	distance to~$x$, and the other chain~$c$ has different minimal distance. If 
	the \rev{distance between~$c$ and~$x$ is shorter than that between~$a$ and~$x$}, 
	then we know that~$c$ must be contained in 
	the main path~$s$ of~$B$, and we have a pendant level-$2$ blob of the 
	form~$(a,b,c,0)$. On the other 
	hand, \rev{if the distance between~$c$ and~$x$ is longer than that between~$a$ and~$x$}, then we know that~$a$ and~$b$ must be 
	contained in the main path~$s$ of~$B$, and we have a pendant level-$2$ blob 
	of the form~$(c,0,a,b)$.
\end{proof}

Observe that in the proof of Lemma~\ref{lem:IdentifyBlobsFromSplits}, the 
longest distance information was used only to distinguish the pendant level-$1$ 
blob with a chain~$(a,k)$ and the pendant level-$2$ blob of the form~$(a,0,0,0)$  
for~$k\in\{2,3\}$. In other words, using only the shortest distances, the pendant 
level-$1$ blob containing~$2$ leaves cannot be distinguished from the pendant 
level-$2$ blob also containing~$2$ leaves on the same side; the pendant 
level-$1$ blob 
containing~$3$ leaves cannot be distinguished from the pendant level-$2$ blob 
of containing the same leaves on the same side. We shall denote these four 
subgraph structures as 
\emph{bad blobs}. That is, we say that a level-$1$ blob is \emph{bad} if it is 
incident to exactly three or four cut-edges. We say that a level-2 blob~$B$ is 
\emph{bad} if, of the three main paths~$s_1,s_2,s_3$ of~$B$, the main 
side~$s_1$ is incident to a single cut-edge,~$s_2$ is incident to no cut-edges, 
and~$s_3$ is incident to exactly two or three cut-edges.

The reason why we cannot discern these bad blobs is because the shortest 
distance between the end-leaves of the chain uses the path containing the spine 
of the chain, which is the same length for both pendant level-$1$ and pendant 
level-$2$ blobs. Whenever these chains contain at least~$4$ leaves, a
shortest path no longer contains the spine; since such paths differ in distance 
for pendant level-$1$ and pendant level-$2$ blobs with a single chain, we are 
able to identify such pendant blobs. We later show that level-$2$ networks that 
do not contain bad blobs are reconstructible from their shortest distances 
(Corollary~\ref{cor:L2Restricted}).

The following lemma states that if we can identify certain structures within 
level-$2$ networks, then we may replace them by a leaf, and we may obtain the 
distance matrix of the reduced network.

\begin{lemma}\label{lem:AdjustDistances}
	Let~$N$ be a level-$2$ network on~$X$ with a pendant blob~$B$, and 
	replace~$B$ by a leaf~$z\notin X$ to obtain the network~$N'$. 
	Letting~$Y$ denote the set of leaves contained in~$B$, we have that the 
	sl-distance matrix of~$N'$ contains the elements
	\[d^{N'}(p,q) = d^N(p,q)\]
	for all pair of leaves $p,q\in X-Y$. Now, for all~$p\in X-Y$, we have the 
	following.
	\begin{itemize}
		\item \textbf{$\bm{B}$ is a pendant level-$\bm{1}$ blob with the 
		chain~$\bm{(a,k)}$:} 
		\[d^{N'}(p,z) = \{d^N_m(p,a) - 2, d^N_l(p,a) - (k+1)\}\]
		\item \textbf{$\bm{B}$ is a pendant level-$\bm{2}$ blob of the 
		form~$\bm{F}$:}
		\[d^{N'}(p,z) = 
		\begin{cases}
			\{d^N_m(p,a)-3, d^N_l(p,a)-(k+3)\} 					
			& \text{ if } F = (a,0,0,0)\\
			\{d^N_m(p,a)-3, d^N_l(p,a)-(k+\ell+3)\} 			
			& \text{ if } F = (a,b,0,0)\\
			\{d^N_m(p,c)-2, d^N_l(p,c)-(k+m+3)\} 				
			& \text{ if } F = (a,0,c,0)\\
			\{d^N_m(p,c)-2, d^N_l(p,c)-(\max\{k,\ell\}+m+3)\} 	
			& \text{ if } F = (a,b,c,0)\\
			\{d^N_m(p,c)-2, d^N_l(p,c)-(k+m+n+3)\} 				
			& \text{ if } F = (a,0,c,d)\\
			\{d^N_m(p,c)-2, d^N_l(p,c)-(\max\{k,\ell\}+m+n+3)\}	
			& \text{ if } F = (a,b,c,d)
		\end{cases}
		\]
	\end{itemize}
\end{lemma}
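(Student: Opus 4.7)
The plan is to exploit that $B$ is pendant: there is a unique non-trivial cut-edge $uv$ with $u \in B$ and $v \notin B$, and in $N'$ the new leaf $z$ is attached to $v$ by the single edge $vz$. A first observation is that, for any $p,q \in X - Y$, every simple $p$-$q$ path in $N$ avoids all vertices of $B$: entering $B$ would require the edge $uv$, and a simple path cannot traverse $uv$ twice, so it could not exit. Hence the simple $p$-$q$ paths in $N$ and $N'$ coincide, giving $d^N(p,q) = d^{N'}(p,q)$ as claimed.

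For $p \in X - Y$ and $\ell \in Y$, every simple $p$-$\ell$ path in $N$ decomposes canonically as a $p$-$v$ sub-path (which avoids $B$), the edge $vu$, and a $u$-$\ell$ sub-path (which stays inside $B$). Since the two sub-paths may be chosen independently, for any nonempty $S \subseteq Y$ we have
\begin{equation*}
d^N_m(p, S) = d^N_m(p,v) + 1 + \alpha_S, \qquad d^N_l(p, S) = d^N_l(p,v) + 1 + \beta_S,
\end{equation*}
where $\alpha_S := \min_{\ell \in S} d^B_m(u,\ell)$ and $\beta_S := \max_{\ell \in S} d^B_l(u,\ell)$. Since $z$ has unique neighbour $v$ in $N'$, we also have $d^{N'}_m(p,z) = d^N_m(p,v)+1$ and $d^{N'}_l(p,z) = d^N_l(p,v)+1$. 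Subtracting yields the master identities
\begin{equation*}
d^{N'}_m(p,z) = d^N_m(p,S) - \alpha_S, \qquad d^{N'}_l(p,z) = d^N_l(p,S) - \beta_S,
\end{equation*}
and each of the seven formulas in the statement follows by plugging in the values of $\alpha_S$ and $\beta_S$ for the appropriate chain $S$ (namely $a$ in the level-$1$ case and in the $(a,0,0,0)$ and $(a,b,0,0)$ types, and $c$ in the remaining four types).

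The remaining task is a direct case analysis on the seven pendant blob types in Figure~\ref{fig:PBlobs}. For each type, $\alpha_S$ is found by locating the leaf of $S$ nearest to $u$ in the small blob graph: this gives $\alpha_S = 2$ except when $u$ lies on a main path carrying no chain on either side (the $(a,0,0,0)$ and $(a,b,0,0)$ blobs), where reaching the nearest leaf of $a$ requires an extra hop through a pole and yields $\alpha_a = 3$. Similarly, $\beta_S$ is obtained by exhibiting an explicit longest simple $u$-to-$S$ path. For instance, in the $(a,b,c,d)$ blob this longest path leaves $u$ through the chain-$d$ side, traverses all of $d$ to the far pole, crosses back to the near pole via whichever of the $a$- or $b$-main paths is longer, and then runs the entire length of $c$ to its far end-leaf, giving $\beta_c = \max\{k,\ell\} + m + n + 3$. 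The main obstacle is verifying that no alternative route beats the proposed one; however, each blob has bounded combinatorial complexity once the chain lengths are fixed, so a finite enumeration of route shapes (which pole is visited first, whether an empty-side shortcut is taken, which end-leaf of $S$ is targeted) suffices to confirm $\beta_S$ in every case, and the formulas then follow by substitution.
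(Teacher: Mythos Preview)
Your proposal is correct and follows essentially the same approach as the paper: the paper's proof is a two-sentence sketch saying one should subtract the shortest/longest distance from the blob's cut-vertex $u$ to the relevant chain, which is precisely your $\alpha_S$ and $\beta_S$. Your write-up simply makes this explicit and rigorous, supplying the cut-edge decomposition argument and the case analysis that the paper leaves to the reader.
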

\begin{proof}
	To obtain the inter-taxa distances for~$N'$, it suffices to simply subtract 
	the shortest / longest distances from the vertex of the pendant blob 
	incident to the non-trivial cut-edge to an end-spine leaf of a chain. These 
	distances are easy to obtain as we know exactly what the pendant blobs are 
	in all cases, due to Lemma~\ref{lem:IdentifyBlobsFromSplits}.
\end{proof}
The above two lemmas will now be combined to prove the following result.

\begin{theorem}\label{thm:L2SL}
	Level-2 networks are reconstructible from their sl-distance matrix.
\end{theorem}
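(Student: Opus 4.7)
The plan is to prove the theorem by induction on the number of blobs of the level-$2$ network $N$. By Corollary~\ref{cor:SubtreeReduction} (extended to sl-distances as the paper notes), we may assume throughout that $N$ contains no cherries, since reconstructibility from the sl-distance matrix is preserved under cherry reduction. For the base case, if $N$ has at most one blob, then Lemma~\ref{lem:L1} gives reconstructibility already from the shortest distance matrix alone, hence from $\D(N)$.

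For the inductive step, suppose $N$ has $k \geq 2$ blobs and that every level-$2$ network with fewer blobs is reconstructible from its sl-distance matrix. The first step is to identify a pendant blob of $N$ together with its exact structure. Since the shortest distances are encoded in $\D(N)$, Theorem~\ref{thm:CutEdgeSplit} allows us to recover every cut-edge induced split of $N$ from $\D(N)$ alone. Among these, we select a minimal non-trivial split $A \mid B$ with $A$ a minimal part; by Lemma~\ref{lem:MinimalPendant}, $A$ is precisely the leaf set of some pendant blob $B_0$ of $N$. We then invoke Lemma~\ref{lem:IdentifyBlobsFromSplits}, which using the full sl-distance information (both shortest and longest distances from leaves in $A$ to leaves in $X - A$) pins down whether $B_0$ is level-$1$ or level-$2$, and in the latter case determines its exact form $(a,b,c,d)$ along with the chain lengths.

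Once $B_0$ is identified exactly, we apply Lemma~\ref{lem:AdjustDistances} to compute the sl-distance matrix $\D(N')$ of the network $N'$ obtained from $N$ by replacing $B_0$ with a single leaf $z \notin X$. The network $N'$ is a level-$2$ network with exactly $k-1$ blobs, so by the induction hypothesis (after reducing any cherry that may have been created at $z$, again via the sl-distance analogue of Corollary~\ref{cor:SubtreeReduction}), $N'$ is reconstructible from $\D(N')$. Reattaching $B_0$ at $z$ in place of the leaf then recovers $N$ uniquely up to isomorphism, which completes the induction.

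The main subtlety, and the only place where the longest-distance information is genuinely needed, lies in step 2: distinguishing the bad blobs of Lemma~\ref{lem:IdentifyBlobsFromSplits}, namely the pendant level-$1$ blob on a short chain from the pendant level-$2$ blob of the form $(a,0,0,0)$ on the same chain when the chain length is $2$ or $3$. The shortest distance between the end-leaves of such a chain is identical in both cases because both shortest paths traverse the spine of the chain, so without the longest distance we cannot tell them apart, which is exactly the non-reconstructibility phenomenon illustrated in Figure~\ref{fig:level2CE}. The longest distance resolves this ambiguity, and all other cases of Lemma~\ref{lem:IdentifyBlobsFromSplits} follow by routine inspection of the seven pendant blob types in Figure~\ref{fig:PBlobs}; verifying that no other pendant blob type produces the same shortest-plus-longest signature is the key place where the argument must be carried out case-by-case.
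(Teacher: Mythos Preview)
Your proposal is correct and follows essentially the same approach as the paper: identify a pendant blob via the cut-edge induced splits (Theorem~\ref{thm:CutEdgeSplit} and Lemma~\ref{lem:MinimalPendant}), determine its exact structure via Lemma~\ref{lem:IdentifyBlobsFromSplits}, reduce it to a leaf with Lemma~\ref{lem:AdjustDistances}, and apply induction. The only cosmetic differences are that the paper inducts on the number of edges rather than the number of blobs, and it spells out the uniqueness step slightly more explicitly by taking a second network $M$ with the same sl-distance matrix and showing it must reduce to the same $N'$ and hence be isomorphic to $N$.
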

\begin{proof}
	We prove by induction on the size of the network. For the base case, 
	\rev{a network on a single edge has two leaves, which is trivially reconstructible from its shortest distances.}
	In fact, we know by Lemma~\ref{lem:L1} that a network on a single blob is 
	reconstructible from its shortest distances. So suppose that we are given 
	a level-$2$ network~$N$ with~$|E(N)|$ edges, and that the result holds 
	for all level-$2$ networks with at most~$|E(N)|-1$ edges.
	
	We may assume that~$N$ contains at least two pendant blobs. By the results 
	in Section~\ref{subsec:Chain}, we can 
	partition the leaves into chains, and adjacency between chains can be 
	obtained from sl-distance matrices. By~Theorem~\ref{thm:CutEdgeSplit}, we 
	can 
	obtain all cut-edge induced splits of~$N$ from its shortest distance 
	matrix; by Lemma~\ref{lem:IdentifyBlobsFromSplits}, we can identify all 
	pendant blobs from these splits, by using the sl 
	distance matrix. We can also replace one of these pendant blobs by a 
	leaf~$z$ to 
	obtain a smaller level-$2$ network~$N'$, for which its shortest and longest 
	inter-taxa distances can be obtained by Lemma~\ref{lem:AdjustDistances}. By 
	induction hypothesis,~$N'$ is reconstructible. Then, we can obtain a 
	network isomorphic to~$N$ by replacing the leaf~$z$ with the pendant blob 
	that was originally present. 
	
	\rev{To see that this network is unique, consider another network~$M$ that is not isomorphic to~$N$ such that~$M$ induces the same sl-distance matrix as~$N$.
	Note that~$M$ must also contain a pendant blob~$P$, and upon replacing~$P$ in~$M$ by a leaf~$z$, we get by the induction hypothesis that the resulting network~$M'$ must be isomorphic to~$N'$.
	We obtain a network isomorphic to~$M$ by replacing the leaf~$z$ by~$P$ in~$M'$: but this operation yields a network that is also isomorphic to~$N$.
	It follows that~$N$ and~$M$ must be isomorphic.}
	
	Therefore, level-$2$ networks are 
	reconstructible from their sl-distance matrices.
\end{proof}

As stated before, it is possible to distinguish all pendant blobs from the 
shortest distances matrices if the 
networks do not contain the bad blobs. It follows then that the proof of 
Theorem~\ref{thm:L2SL} can be adapted to prove the following corollary, when we 
look at restricted level-$2$ networks.

\begin{corollary}\label{cor:L2Restricted}
	Let~$N$ be a level-$2$ network containing no bad blobs. Then~$N$ is 
	reconstructible from its shortest distance matrix.
\end{corollary}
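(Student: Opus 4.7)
The plan is to adapt the inductive proof of Theorem~\ref{thm:L2SL}, checking at each step that, under the no-bad-blob hypothesis, only the shortest distance matrix is needed. I would proceed by induction on $|E(N)|$. The base case where $N$ consists of a single blob follows directly from Lemma~\ref{lem:L1}.

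For the inductive step, I may assume $N$ has at least two pendant blobs. First, I would apply Theorem~\ref{thm:CutEdgeSplit} to recover all cut-edge induced splits of $N$ directly from $\mathcal{D}_m(N)$; by Lemma~\ref{lem:MinimalPendant}, any minimal non-trivial such split $A|B$ with $A$ a minimal part pinpoints a pendant blob whose leaf set is precisely $A$. I would then use the chain-identification machinery of Section~\ref{subsec:Chain} (which only requires shortest distances) to read off the chains contained in $A$ and their adjacencies.

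Next, I would invoke Lemma~\ref{lem:IdentifyBlobsFromSplits} to determine the exact form of this pendant blob. The key observation, already flagged in the paragraph preceding the corollary, is that longest distances were used in that lemma only to discriminate a pendant level-$1$ blob with a chain $(a,k)$ for $k\in\{2,3\}$ from a pendant level-$2$ blob of the form $(a,0,0,0)$ with the same chain length, and these are exactly the bad blobs. Since $N$ contains none, the shortest distances alone determine the pendant blob type uniquely. I would then replace the identified pendant blob by a single leaf $z$ via Lemma~\ref{lem:AdjustDistances}; inspecting that lemma, the formulas for the shortest distances in the reduced network $N'$ do not depend on longest distances, so $\mathcal{D}_m(N')$ is computable from $\mathcal{D}_m(N)$. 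Crucially, $N'$ again contains no bad blobs, since the reduction only alters the blob being replaced and therefore cannot create new bad blobs elsewhere.

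By the induction hypothesis, $N'$ is reconstructible from $\mathcal{D}_m(N')$. Uniqueness for $N$ then follows exactly as in Theorem~\ref{thm:L2SL}: any other level-$2$ network $M$ realizing $\mathcal{D}_m(N)$ admits the same minimal split $A|B$, hence the same pendant blob containing $A$ (identified by the same shortest-distance criteria, since $M$ realizing $\mathcal{D}_m(N)$ must also have no bad blobs at $A$); reducing that blob in $M$ produces a network with shortest distance matrix $\mathcal{D}_m(N')$, which must be isomorphic to $N'$, and reinserting the pendant blob yields $M\cong N$. The main point requiring care is the preservation of the no-bad-blob property under reduction and the justification that the identification step cannot mistakenly match $A$ to a bad blob in a putative competitor $M$; both are straightforward because the reduction is strictly local and the shortest-distance criteria in Lemma~\ref{lem:IdentifyBlobsFromSplits} are unambiguous outside of the bad-blob cases.
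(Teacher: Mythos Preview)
Your proposal is correct and follows exactly the approach the paper indicates: adapt the inductive argument of Theorem~\ref{thm:L2SL}, observing that longest distances are invoked in Lemma~\ref{lem:IdentifyBlobsFromSplits} only to separate the bad-blob pairs, that the shortest-distance formulas in Lemma~\ref{lem:AdjustDistances} suffice to compute $\mathcal{D}_m(N')$, and that the no-bad-blob hypothesis is preserved under pendant-blob replacement since the cut-edge structure of every other blob is unchanged. The paper itself offers no further detail beyond pointing to this adaptation, so you have written out what the paper leaves implicit.
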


A direct consequence of Theorem~\ref{thm:L2SL} and 
Corollary~\ref{cor:L2Restricted}, for restricted level-$2$ networks, is that by 
iteratively reducing pendant subtrees and pendant blobs from a network, it is 
possible to reconstruct the network from its sl-distance matrix
 and shortest distance matrix, respectively. Note that subtree reduction may be necessary 
after a few iterations of reducing pendant blobs from a network, as it is 
possible to obtain cherries from such reductions. Therefore the above results 
implicitly give an algorithm for reconstructing level-$2$ networks from 
their sl-distance matrices.

Completely excluding all bad blobs is quite restrictive. There can indeed 
exist networks that contain bad blobs that are still reconstructible from their 
shortest distances. For example, take a network in which there is exactly one 
bad blob. By 
reducing all cherries and all other pendant blobs before we reduce the bad 
blob, we are able to obtain a network on a single blob (which is necessarily 
the bad blob). Since networks on single blobs are reconstructible by 
Lemma~\ref{lem:L1}, it follows then that the original network is 
also reconstructible. Therefore, in an effort to weaken the restriction of 
completely disallowing bad blobs, we next aim to characterize level-$2$ 
networks 
that are not reconstructible from their shortest distances.


\section{Characterization of Level-$2$ networks that cannot be reconstructed 
from 
their shortest distances}\label{sec:Forbidden}

In this section we show that level-$2$ networks that cannot be reconstructed 
from their shortest distances can be categorized by a type of subgraph that 
they must contain. We only consider shortest distances in this section; we use 
use~$d^N(x,y)$ to denote the shortest distance between two vertices~$x$ 
and~$y$ in a network~$N$.

\subsection{Alt-path structures}

Let~$T$ be any binary tree with labelled leaves. Two-color \rev{the vertices of}~$T$ with colors 
black and red. Let~$G$ denote a graph obtained by 
\begin{itemize}
	\item replacing each black internal vertex by a certain level-$2$ blob. 
	That is, 
	for each internal vertex~$v$ with neighbors~$u_i$ for~$i\in[3]$, 
	delete~$v$, add vertices~$v_i,n_v,s_v$ and 
	edges~$u_iv_i, n_vv_i, s_vv_i$ for~$i\in[3]$;
	\item replacing each black leaf by a pendant level-$2$ blob of the 
	form~$(2,0,0,0)$ or~$(3,0,0,0)$; \rev{and}
	\item replacing each red leaf by a pendant level-$1$ blob with two or three 
	leaves.
\end{itemize}
The leaves of~$G$ are unlabelled. We call~$G$ an \emph{alt-path structure} 
of~$T$. We may obtain another alt-path structure~$H$ of~$T$ by swapping the 
roles of the red and black vertices in the blob replacement step. We say 
that~$H$ is \emph{similar} to~$G$ if every pendant blob of~$H$ that replaces a 
leaf~$l$ of~$T$ contains the same
number~$s$ of leaves as that of~$G$ that 
replaces~$l$. See Figure~\ref{fig:AltPath} for an example of obtaining two 
similar alt-path structures from the same binary tree. 
\rev{Note that every binary tree~$T$ on at least two leaves gives rise to exactly two alt-path structures, and these are similar to each other.}

We say that a network \emph{contains} an alt-path structure of some tree if the 
alt-path structure is a subgraph of the network \rev{up to deleting leaf labels.}
Suppose that~$N$ contains an alt-path structure~$G$ of some tree~$T$. 
\rev{Let~$H$ be the similar alt-path structure of~$G$. The operation of \emph{replacing}~$G$
by its similar alt-path structure is the action of replacing the subgraph~$G$ by~$H$ in~$N$.}




\begin{figure}
	\centering
	\begin{subfigure}{.45\textwidth}
		\centering
		\begin{tikzpicture}
		[every node/.style={draw, circle, fill, inner sep=0pt, minimum 
			size=2mm}]
		\tikzset{edge/.style={very thick}}
		
		\node[] (a) at (0,0) {}
		node[draw=none, fill=none, left=1mm of a] {\large $a$}
		node[draw=none, fill=none, above left=3mm and 2mm of a] {\LARGE $T$};
		\node[] (b) at (0,-2) {}
		node[draw=none, fill=none, left=1mm of b] {\large $b$};
		\node[fill=none] (c) at (1.5,-2) {}
		node[draw=none, fill=none, left=1mm of c] {\large $c$};
		\node[] (d) at (3,0) {}
		node[draw=none, fill=none, right=1mm of d] {\large $d$};
		\node[] (e) at (3,-2) {}
		node[draw=none, fill=none, right=1mm of e] {\large $e$};
		
		\node[fill=none] (1) at (0.75,-1) {};
		\node[] (2) at (1.50,-1) {};
		\node[fill=none] (3) at (2.25,-1) {};
		
		\draw[edge] 
		(a) -- (1)
		(1) -- (b)
		(1) -- (3)
		(d) -- (3)
		(3) -- (e)
		(2) -- (c);
		\end{tikzpicture}
	\end{subfigure}\hfill
	\begin{subfigure}{.45\textwidth}
		\centering
		\begin{subfigure}[t]{\textwidth}
			\centering
			\begin{tikzpicture}
			[every node/.style={draw, circle, fill, inner sep=0pt, minimum 
				size=2mm}]
			\tikzset{edge/.style={very thick}}
			
			\draw[very thick, black] (0, 0) circle (0.4);
			\node[] (la1) at (-1,{0.4*sin(140)}) {}
			node[draw=none, fill=none, left=1mm of la1] {\large $l_{a1}$}
			node[draw=none, fill=none, above left=3mm and 2mm of la1] {\LARGE 
				$N_1$};
			\node[] (la2) at (-1,{0.4*sin(220)}) {}
			node[draw=none, fill=none, left=1mm of la2] {\large $l_{a2}$};
			\node[] (lav1) at ({0.4*cos(140)},{0.4*sin(140)}) {};
			\node[] (lav2) at ({0.4*cos(220)},{0.4*sin(220)}) {};
			\node[] (lavc) at ({0.4*cos(315)}, {0.4*sin(315)}) {};
			\node[] (lavn) at ({0.4*cos(90)}, {0.4*sin(90)}) {};			
			\node[] (lavs) at ({0.4*cos(-90)}, {0.4*sin(-90)}) {};
			
			\draw[very thick, black] (0,-2) circle (0.4);
			\node[] (lb1) at (-1,{-2 + 0.4*sin(140)}) {}
			node[draw=none, fill=none, left=1mm of lb1] {\large $l_{b1}$};
			\node[] (lb2) at (-1,{-2 + 0.4*sin(220)}) {}
			node[draw=none, fill=none, left=1mm of lb2] {\large $l_{b2}$};
			\node[] (lbv1) at ({0.4*cos(140)},{-2 + 0.4*sin(140)}) {};
			\node[] (lbv2) at ({0.4*cos(220)},{-2 + 0.4*sin(220)}) {};
			\node[] (lbvc) at ({0.4*cos(45)}, {-2 + 0.4*sin(45)}) {};
			\node[] (lbvn) at ({0.4*cos(90)}, {-2 + 0.4*sin(90)}) {};			
			\node[] (lbvs) at ({0.4*cos(-90)}, {-2 + 0.4*sin(-90)}) {};
			
			\draw[very thick, black] (1.5, -2) circle (0.4);			
			\node[] (lc1) at ({1.5 + 0.4*cos(230)},-3) {}
			node[draw=none, fill=none, left=1mm of lc1] {\large $l_{c1}$};
			\node[] (lc2) at ({1.5 + 0.4*cos(310)},-3) {}
			node[draw=none, fill=none, right=1mm of lc2] {\large $l_{c2}$};
			\node[] (lcv1) at ({1.5 + 0.4*cos(230)},{-2 + 0.4*sin(230)}) {};
			\node[] (lcv2) at ({1.5 + 0.4*cos(310)},{-2 + 0.4*sin(310)}) {};
			\node[] (lcvc) at ({1.5 + 0.4*cos(90)}, {-2 + 0.4*sin(90)}) {};

			\draw[very thick, black] (3, 0) circle (0.4);			
			\node[] (ld1) at (4,{0.4*sin(140)}) {}
			node[draw=none, fill=none, right=1mm of ld1] {\large $l_{d1}$};
			\node[] (ld2) at (4,{0.4*sin(220)}) {}
			node[draw=none, fill=none, right=1mm of ld2] {\large $l_{d2}$};
			\node[] (ldv1) at ({3 + 0.4*cos(40)},{0.4*sin(40)}) {};
			\node[] (ldv2) at ({3 + 0.4*cos(-40)},{0.4*sin(-40)}) {};
			\node[] (ldvc) at ({3 + 0.4*cos(225)}, {0.4*sin(225)}) {};
			\node[] (ldvn) at ({3 + 0.4*cos(90)}, {0.4*sin(90)}) {};			
			\node[] (ldvs) at ({3 + 0.4*cos(-90)}, {0.4*sin(-90)}) {};
			
			\draw[very thick, black] (3,-2) circle (0.4);
			\node[] (le1) at (4,{-2 + 0.4*sin(140)}) {}
			node[draw=none, fill=none, right=1mm of le1] {\large $l_{e1}$};
			\node[] (le2) at (4,{-2 + 0.4*sin(220)}) {}
			node[draw=none, fill=none, right=1mm of le2] {\large $l_{e2}$};
			\node[] (lev1) at ({3 + 0.4*cos(40)},  {-2 + 0.4*sin(40)}) {};
			\node[] (lev2) at ({3 + 0.4*cos(-40)}, {-2 + 0.4*sin(-40)}) {};
			\node[] (levc) at ({3 + 0.4*cos(135)}, {-2 + 0.4*sin(135)}) {};
			\node[] (levn) at ({3 + 0.4*cos(90)}, {-2 + 0.4*sin(90)}) {};
			\node[] (levs) at ({3 + 0.4*cos(-90)}, {-2 + 0.4*sin(-90)}) {};
			
			\draw[very thick, black] (1.5, -0.2) circle (0.4);
			\node[] (r1) at ({1.5 + 0.4*cos(180)},  {-0.2 + 0.4*sin(180)}) {};
			\node[] (r2) at ({1.5 + 0.4*cos(0)}, {-0.2 + 0.4*sin(0)}) {};
			\node[] (rc) at (1.5,-0.2) {};
			\node[] (rn) at ({1.5 + 0.4*cos(90)}, {-0.2 + 0.4*sin(90)}) {};
			\node[] (rs) at ({1.5 + 0.4*cos(270)}, {-0.2 + 0.4*sin(270)}) {};
			
			\node[] (1) at (0.75,-1) {};
			\node[] (2) at (2.25,-1) {};
			
			\draw[edge] 
			(la1) -- (lav1)
			(la2) -- (lav2)
			(lb1) -- (lbv1)
			(lb2) -- (lbv2)
			(lc1) -- (lcv1)
			(lc2) -- (lcv2)
			(ld1) -- (ldv1)
			(ld2) -- (ldv2)
			(le1) -- (lev1)
			(le2) -- (lev2)
			(lavc) -- (1)
			(lbvc) -- (1)
			(ldvc) -- (2)
			(levc) -- (2)
			(1) -- (r1)
			(r2) -- (2)
			(lavn) -- (lavs)
			(lbvn) -- (lbvs)
			(ldvn) -- (ldvs)
			(levn) -- (levs)
			(rn) -- (rs);
			
			\draw[edge, bend left, dotted] (rc) edge (lcvc);
			
			\end{tikzpicture}
		\end{subfigure}
	
		\begin{subfigure}[t]{\textwidth}
			\centering
			\begin{tikzpicture}
			[every node/.style={draw, circle, fill, inner sep=0pt, minimum 
				size=2mm}]
			\tikzset{edge/.style={very thick}}
			
			\draw[very thick, black] (0, 0) circle (0.4);
			\node[] (la1) at (-1,{0.4*sin(140)}) {}
			node[draw=none, fill=none, left=1mm of la1] {\large $l_{a1}$}
			node[draw=none, fill=none, above left=3mm and 2mm of la1] {\LARGE 
				$N_2$};
			\node[] (la2) at (-1,{0.4*sin(220)}) {}
			node[draw=none, fill=none, left=1mm of la2] {\large $l_{a2}$};
			\node[] (lav1) at ({0.4*cos(140)},{0.4*sin(140)}) {};
			\node[] (lav2) at ({0.4*cos(220)},{0.4*sin(220)}) {};
			\node[] (lavc) at ({0.4*cos(315)}, {0.4*sin(315)}) {};			
			
			\draw[very thick, black] (0,-2) circle (0.4);
			\node[] (lb1) at (-1,{-2 + 0.4*sin(140)}) {}
			node[draw=none, fill=none, left=1mm of lb1] {\large $l_{b1}$};
			\node[] (lb2) at (-1,{-2 + 0.4*sin(220)}) {}
			node[draw=none, fill=none, left=1mm of lb2] {\large $l_{b2}$};
			\node[] (lbv1) at ({0.4*cos(140)},{-2 + 0.4*sin(140)}) {};
			\node[] (lbv2) at ({0.4*cos(220)},{-2 + 0.4*sin(220)}) {};
			\node[] (lbvc) at ({0.4*cos(45)}, {-2 + 0.4*sin(45)}) {};			
			
			\draw[very thick, black] (1.5, -2) circle (0.4);			
			\node[] (lc1) at ({1.5 + 0.4*cos(230)},-3) {}
			node[draw=none, fill=none, left=1mm of lc1] {\large $l_{c1}$};
			\node[] (lc2) at ({1.5 + 0.4*cos(310)},-3) {}
			node[draw=none, fill=none, right=1mm of lc2] {\large $l_{c2}$};
			\node[] (lcv1) at ({1.5 + 0.4*cos(230)},{-2 + 0.4*sin(230)}) {};
			\node[] (lcv2) at ({1.5 + 0.4*cos(310)},{-2 + 0.4*sin(310)}) {};
			\node[] (lcvc) at ({1.5 + 0.4*cos(90)}, {-2 + 0.4*sin(90)}) {};
			\node[] (lcw) at ({1.5 + 0.4*cos(180)}, {-2 + 0.4*sin(180)}) {};
			\node[] (lce) at ({1.5 + 0.4*cos(0)}, {-2 + 0.4*sin(0)}) {};
			
			\draw[very thick, black] (3, 0) circle (0.4);			
			\node[] (ld1) at (4,{0.4*sin(140)}) {}
			node[draw=none, fill=none, right=1mm of ld1] {\large $l_{d1}$};
			\node[] (ld2) at (4,{0.4*sin(220)}) {}
			node[draw=none, fill=none, right=1mm of ld2] {\large $l_{d2}$};
			\node[] (ldv1) at ({3 + 0.4*cos(40)},{0.4*sin(40)}) {};
			\node[] (ldv2) at ({3 + 0.4*cos(-40)},{0.4*sin(-40)}) {};
			\node[] (ldvc) at ({3 + 0.4*cos(225)}, {0.4*sin(225)}) {};			
			
			\draw[very thick, black] (3,-2) circle (0.4);
			\node[] (le1) at (4,{-2 + 0.4*sin(140)}) {}
			node[draw=none, fill=none, right=1mm of le1] {\large $l_{e1}$};
			\node[] (le2) at (4,{-2 + 0.4*sin(220)}) {}
			node[draw=none, fill=none, right=1mm of le2] {\large $l_{e2}$};
			\node[] (lev1) at ({3 + 0.4*cos(40)},  {-2 + 0.4*sin(40)}) {};
			\node[] (lev2) at ({3 + 0.4*cos(-40)}, {-2 + 0.4*sin(-40)}) {};
			\node[] (levc) at ({3 + 0.4*cos(135)}, {-2 + 0.4*sin(135)}) {};
			
			\draw[very thick, black] (0.75, -1) circle (0.4);
			\node[] (l1) at (0.75, -1) {};
			\node[] (l2) at ({0.75 + 0.4*cos(-135)}, {-1 + 0.4*sin(-135)}) {};
			\node[] (lc) at ({0.75 + 0.4*cos(0)}, {-1 + 0.4*sin(0)}) {};
			\node[] (ln) at ({0.75 + 0.4*cos(90)}, {-1 + 0.4*sin(90)}) {};
			\node[] (ls) at ({0.75 + 0.4*cos(270)}, {-1 + 0.4*sin(270)}) {};
			
			\draw[very thick, black] (2.25, -1) circle (0.4);
			\node[] (r1) at (2.25, -1) {};
			\node[] (r2) at ({2.25 + 0.4*cos(-45)}, {-1 + 0.4*sin(-45)}) {};
			\node[] (rc) at ({2.25 + 0.4*cos(180)}, {-1 + 0.4*sin(180)}) {};
			\node[] (rn) at ({2.25 + 0.4*cos(90)}, {-1 + 0.4*sin(90)}) {};
			\node[] (rs) at ({2.25 + 0.4*cos(270)}, {-1 + 0.4*sin(270)}) {};
			
			\node[] (origin) at (1.50,-1) {};
			
			\draw[edge] 
			(la1) -- (lav1)
			(la2) -- (lav2)
			(lb1) -- (lbv1)
			(lb2) -- (lbv2)
			(lc1) -- (lcv1)
			(lc2) -- (lcv2)
			(ld1) -- (ldv1)
			(ld2) -- (ldv2)
			(le1) -- (lev1)
			(le2) -- (lev2)
			(lbvc) -- (l2)
			(levc) -- (r2)
			(ln) -- (ls)
			(rn) -- (rs)
			(lc) -- (origin)
			(origin) -- (rc)
			(origin) -- (lcvc)
			(lcw) -- (lce);
			
			\draw[edge, dashed, bend right]
			(lavc) edge (l1);
			
			\draw[edge, dashed, bend left]
			(ldvc) edge (r1);

			\end{tikzpicture}
		\end{subfigure}
	\end{subfigure}
	\caption{An example of obtaining two alt-path structures from a binary 
	tree~$T$. The network~$N_1$ is obtained by replacing all filled internal 
	vertices by a level-$2$ blob with each cut-edge subdividing the three 
	different sides, filled leaf vertices by a pendant level-$2$ blob of the 
	form~$(k,0,0,0)$ for~$k=2$ (note that this can also be~$k=3$), and unfilled 
	leaf vertices by a pendant level-$1$ blob of~$2$ or~$3$ leaves. The 
	similar alt-path structure~$N_2$ is obtained by the same construction, with 
	the roles of 
	filled and unfilled vertices reversed. Observe that~$N_1$ and~$N_2$ has the 
	same shortest distance matrix, as stated in Lemma~\ref{lem:SimilarAltPath}.}
	\label{fig:AltPath}
\end{figure}
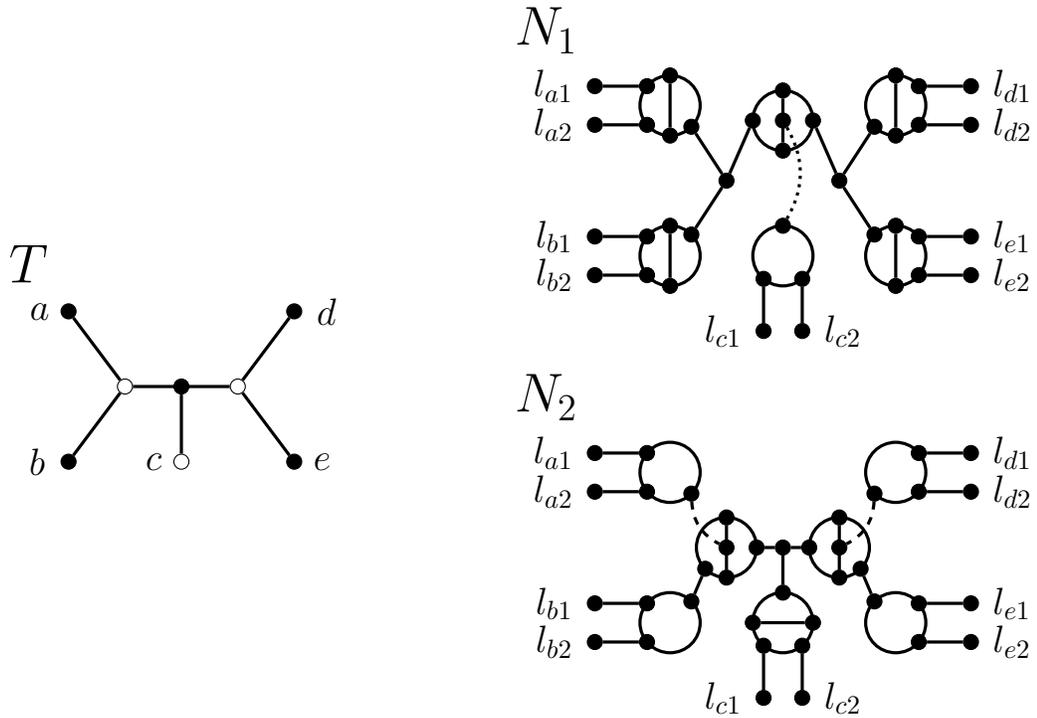

\begin{lemma}\label{lem:SimilarAltPath}
	Similar alt-path structures of a given binary tree realize the same 
	shortest distance matrix.
\end{lemma}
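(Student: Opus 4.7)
The plan is to show that, under the canonical bijection between the leaves of $G$ and $H$ induced by their shared underlying tree~$T$ and the matching per-leaf blob sizes, all pairwise shortest distances coincide. The key structural fact is that the coloring of~$T$ used in the construction is necessarily proper (colors alternate along every path), since black and red vertices are treated differently by the construction and a well-formed alt-path structure forces adjacent tree vertices to receive different colors. Consequently, along the unique $T$-path between any two leaves~$l, l'$, the internal vertices alternate in color in a pattern determined by the colors of~$l$ and~$l'$.

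First, I would verify two local distance invariants by direct case-by-case computation on the finitely many blob types involved. (i) For each~$s \in \{2, 3\}$, a pendant level-$1$ blob with~$s$ leaves and a pendant level-$2$ blob of form~$(s,0,0,0)$ realize the same pairwise shortest distances among their~$s$ leaves; however, the shortest distance from any such leaf to the vertex incident to the non-trivial cut-edge is exactly~$+1$ greater in the level-$2$ case. (ii) The internal level-$2$ blob that replaces a black internal vertex~$v$ of~$T$ with neighbors~$u_1, u_2, u_3$ satisfies~$d(u_i, u_j) = 4$ for any two distinct~$u_i, u_j$, which is exactly~$+2$ more than the distance~$d(u_i, u_j) = 2$ obtained when~$v$ is kept as a single degree-$3$ vertex (as in~$H$).

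Next, observing that the biconnected components of~$G$ traversed by any shortest path between two leaves~$x \in B_l$ and~$y \in B_{l'}$ are precisely those corresponding to vertices on the $T$-path from~$l$ to~$l'$, one obtains the additive decomposition
\begin{equation*}
d_G(x,y) \;=\; d_G(x, c_l) \;+\; p \;+\; 2\cdot (\#\text{black internal vertices on the }T\text{-path}) \;+\; d_G(c_{l'}, y),
\end{equation*}
where~$p$ is the length of the $T$-path and~$c_l, c_{l'}$ denote the endpoints inside~$B_l, B_{l'}$ of the outgoing non-trivial cut-edges. The analogous formula holds for~$d_H(x', y')$ with ``black'' replaced by ``red.''

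Finally, equality $d_G(x,y) = d_H(x',y')$ reduces to an accounting identity: the~$\pm 1$ deviations at each pendant-blob endpoint (from invariant (i)) and the~$\pm 2$ deviations at each internal vertex on the path (from invariant (ii)) must cancel. The alternating coloring delivers this cancellation: when~$l$ and~$l'$ are both black, the $T$-path has exactly one more red internal vertex than black, and the resulting extra~$-2$ on swapping cancels the~$+1 + 1 = +2$ excess from the two level-$2$ pendant blobs in~$G$; the both-red case is symmetric, and when~$l$ and~$l'$ have different colors the pendant deviations cancel individually while the internal counts coincide. The main obstacle is ruling out that the shortest path could gain anything by ``cutting across'' an internal blob via a non-standard route, but invariant (ii) already pins down the exact mutual distance between the three attachment points of every internal blob, so the standard tree-path traversal is optimal. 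The degenerate subcase~$l = l'$ (both leaves in the same pendant blob) follows immediately from invariant (i).
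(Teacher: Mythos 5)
Your proposal is correct and follows essentially the same route as the paper's proof: both arguments rest on the proper two-colouring of $T$ alternating along the leaf-to-leaf path, a parity count of black versus red internal vertices, and the local facts that swapping a pendant blob's level changes the leaf-to-attachment distance by $1$ while swapping an internal vertex for a blob changes the traversal cost by $2$, with these deviations cancelling in every parity case. Your explicit additive decomposition formula is just a more detailed rendering of the paper's "offset" accounting.
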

\begin{proof}
	Let~$N$ be a level-$2$ network that is an alt-path structure~$G$ of some 
	binary tree~$T$. Let~$N'$ be a network obtained from~$N$ by replacing~$G$ 
	by its similar alt-path structure.
	
	Let~$x$ and~$y$ denote two leaves in~$N$. The two networks~$N$ and~$N'$ 
	both contain the same chains by construction. Furthermore, each chain is of 
	length at most~$3$. Thus we have that~$d^N(x,y) = d^{N'}(x,y)$ if~$x$ 
	and~$y$ are contained in the same chains. So we may assume that~$x$ 
	and~$y$ are contained in different chains. We wish to show that~$d^N(x,y) = 
	d^{N'}(x,y)$. 
	Consider the leaves~$l_x$ and~$l_y$ of~$T$ that were replaced by 
	the pendant blobs containing~$x$ and~$y$, respectively. If~$d_T(l_x,l_y)$ 
	is odd, then there is an even number of internal vertices in the path 
	between~$l_x$ and~$l_y$ in~$T$. This means that~$N$ and~$N'$ contain the 
	same number of non-pendant level-$2$ blobs and the same number of non-leaf 
	vertices not contained in blobs in the shortest path between~$x$ 
	and~$y$. Moreover, due to parity, exactly one of the two leaves
	will be contained in a pendant level-$1$ blob in~$N$ and the other leaf in 
	a pendant level-$2$ blob. The reverse is true for~$N'$. Thus it follows 
	that if~$d_T(l_x,l_y)$ is odd, then~$d^N(x,y) = d^{N'}(x,y)$. 
	Now if~$d_T(l_x,l_y)$ is even, the number of non-pendant level-$2$ blobs in 
	the shortest path between~$x$ and~$y$ will be greater by one in either~$N$ 
	or in~$N'$. Without loss of generality, suppose that~$N$ has this 
	property. But this difference is offset by the fact that the pendant blobs 
	in this path are both level-$1$ in~$N$, whereas they are both level-$2$ 
	in~$N'$. Therefore $d^N(x,y) = d^{N'}(x,y)$.
\end{proof}

\begin{corollary}\label{lem:CEShortest}
	Let~$N$ be a level-$2$ network containing an alt-path structure~$G$ of some 
	binary tree~$T$ as a subgraph. Then~$N$ is not reconstructible from its 
	shortest distance matrix.
\end{corollary}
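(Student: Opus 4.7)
The plan is to produce a distinct network $N'$ on the same leaf set that realizes the same shortest distance matrix as $N$, from which non-reconstructibility is immediate. Given the alt-path structure $G$ of a binary tree $T$ contained in $N$, let $H$ denote the similar alt-path structure of $G$ — the one obtained by swapping the roles of the red and black vertices of $T$ — and form $N'$ from $N$ by substituting $H$ for $G$ in place, keeping any ambient cut-edges, pendant subtrees, and leaf labels attached at the same positions.

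First, I would verify that $N'$ is not isomorphic to $N$. Swapping the two colours in the construction interchanges pendant level-$1$ blobs with pendant level-$2$ blobs of the form $(k,0,0,0)$, and analogously switches the internal blob replacements originating from black vertices with those originating from red vertices. Provided $T$ has at least two leaves so that both colour classes are present in $G$, this alters the multiset of blob types appearing as pendant blobs of the subgraph, which cannot be undone by a graph isomorphism.

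The main step is to show $\D_m(N) = \D_m(N')$. Distances between pairs of leaves both outside $G$ are unaffected, since the ambient network is untouched by the substitution. Distances between pairs of leaves both inside $G$ are preserved by Lemma \ref{lem:SimilarAltPath}. The remaining case is a leaf $x$ inside $G$ and a leaf $y$ outside $G$; for these, any shortest $x$-$y$ path must cross some attachment vertex $v$ where $G$ meets $N \setminus G$, so it suffices to check that the shortest distance from each such $v$ to each leaf of $G$ agrees in $G$ and in $H$. I would establish this by the same parity argument that drives Lemma \ref{lem:SimilarAltPath}: the attachment vertex $v$ sits on some blob that replaces an internal vertex or leaf of $T$, and any shortest path from $v$ through the alt-path structure to the pendant blob containing $x$ decomposes as a sum of contributions over the traversed blob replacements. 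Odd-length paths in $T$ contain equal numbers of each colour of internal vertex and a pendant-blob pair of opposite types, while even-length paths produce a difference of one non-pendant blob that is offset by the two pendant blobs having opposite types; in both cases the totals coincide between $G$ and $H$.

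The hard part will be formalizing this attachment argument in the generality of the definition of subgraph containment ``up to deleting leaf labels''. In particular, one must check that the unlabelled leaves of $G$ identified with non-leaf vertices of $N$ can consistently play the role of attachment points for the parity accounting, and that the substitution producing $N'$ indeed yields a valid level-$2$ network. Once this is in place, $N$ and $N'$ are non-isomorphic networks with $\D_m(N) = \D_m(N')$, so $N$ fails to be reconstructible from its shortest distance matrix.
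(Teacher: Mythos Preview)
Your overall strategy matches the paper's: replace $G$ by its similar alt-path structure $H$ to obtain $N'$, and show $\D_m(N) = \D_m(N')$. The gap is in your case split by endpoint location. You assert that distances between leaves both outside $G$ are ``unaffected, since the ambient network is untouched,'' but this is not justified: a shortest path between two such leaves may well pass through $G$, entering and exiting at leaves of $G$ that happen to be internal (degree-$3$) vertices of $N$. Likewise, for leaves both inside $G$ you invoke Lemma~\ref{lem:SimilarAltPath} directly, but that lemma only controls distances \emph{within} the alt-path structure; a shortest path in $N$ between two leaves of $G$ could detour through $N \setminus G$, so $d_m^N(x,y) = d_m^G(x,y)$ is not automatic.

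The paper handles all three cases uniformly by reasoning about the path rather than the endpoints. Take any shortest $x$--$y$ path $P$ in $N$ and decompose it into maximal segments inside and outside $G$; the junction points are forced to be leaves of $G$, since every non-leaf vertex of $G$ already has full degree $3$ in $G$ and hence no edge to $N \setminus G$. Segments outside $G$ survive verbatim in $N'$; each inside segment runs between two leaves $l_1,l_2$ of $G$, and Lemma~\ref{lem:SimilarAltPath} furnishes a path of length $d_m^G(l_1,l_2) = d_m^H(l_1,l_2)$ between them in $H \subseteq N'$, which is at most the length of that segment of $P$. Concatenating yields $d_m^{N'}(x,y) \le d_m^N(x,y)$, and swapping the roles of $G$ and $H$ gives the reverse inequality. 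Note in particular that the junction points, being leaves of $G$, always sit on pendant blobs (replacements of \emph{leaves} of $T$), not on internal-vertex blobs as your sketch suggests; so no new parity argument beyond Lemma~\ref{lem:SimilarAltPath} is needed.
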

\begin{proof}
	Let~$N'$ denote the network obtained from~$N$ by replacing~$G$ by its 
	similar alt-path structure. We claim that the distinct networks~$N$ 
	and~$N'$ must realize the same shortest distance matrix, thereby proving 
	that~$N$ is not reconstructible from its shortest distance matrix. Consider 
	any two leaves~$x$ and~$y$ of~$N$, and let~$P$ denote a shortest path 
	between~$x$ and~$y$ in~$N$. If~$P$ does not contain any edges of~$G$, then 
	a path between~$x$ and~$y$ on the same length must exist in~$N'$, since 
	only the subgraph~$G$ of~$N$ was changed to obtain~$N'$. On the other hand, 
	if~$P$ contains an edge of~$G$, then~$P$ must contain exactly one path 
	of~$G$ that starts and ends at two leaves~$l_1,l_2$ of~$G$. Since only the 
	subgraph~$G$ of~$N$ was changed to obtain~$N'$, we have that~$d^N(x,l_1) = 
	d^{N'}(x,l_1)$, and that~$d^N(l_2,y) = d^{N'}(l_2,y)$. By 
	Lemma~\ref{lem:SimilarAltPath}, we have that~$d^N(l_1,l_2) = 
	d^{N'}(l_1,l_2)$. So a path between~$x$ and~$y$ on the same length must 
	also exist in~$N'$. It follows that~$d^{N'}(x,y) \le d^N(x,y)$.
	
	Now consider a shortest path~$Q$ between~$x$ and~$y$ in~$N'$. By applying 
	the same arguments to~$Q$, but with the alt-path structure that is similar 
	to~$G$, we 
	conclude that~$d^N(x,y) \le d^{N'}(x,y)$. This proves that~$d^N(x,y) = 
	d^{N'}(x,y)$.
\end{proof}

It is now easy to explain why the two networks in Figure~\ref{fig:level2CE} 
realize the same shortest distance matrix; they contain similar alt-path 
structures of a binary tree on two leaves.
We show in the next subsection that the converse of 
Corollary~\ref{lem:CEShortest}, that a level-$2$ network containing no alt-path 
structure is reconstructible, is also true. 

\subsection{Level-$2$ networks without alt-path structures are reconstructible}

We introduce some more terminology. Let~$N$ be a level-$2$ network. A 
\emph{blob tree} of~$N$ is the graph obtained by contracting all edges of 
blobs, deleting all labelled leaves, and suppressing all degree-$2$ vertices. A 
vertex of a blob-tree is called a 
\emph{blob-vertex}. 
We define the \emph{connection} of a pendant blob as the endpoint of the 
non-trivial cut-edge incident to the blob that is not on the blob.
We say that a blob~$B$ \emph{contains} a pendant blob~$C$ 
if the connection of~$C$ is a vertex of~$B$. For any blob~$B$ in~$N$, we 
let~$l(B)$ denote the level of~$B$.

Let~$P_1$ be a bad pendant blob on two leaves~$l_1,l_2$, and let~$u$ be a 
vertex 
of~$N$ \rev{that is not a neighbor of~$l_1$ nor~$l_2$}. We let~$d^N(P_1,u) = d^N(l_1,u)$ denote the shortest distance 
between~$P_1$ and~$u$. This is well-defined for all bad pendant blobs 
containing 
exactly two leaves, since the shortest distance from either of the two leaves 
to any other vertex in the network is the same. Let~$P_2$ be another bad 
pendant 
blob on two leaves~$l'_1,l'_2$. Then we may similarly define the shortest 
distance between~$P_1$ and~$P_2$ by~$d^N(P_1,P_2) = d^N(l_1,l'_1)$. This again 
is well-defined as the two bad pendant blobs both contain two leaves.

Let~$P_1$ and~$P_2$ be two pendant blobs that are contained in the same 
blob~$B$. Let~$p_1$ and~$p_2$ be the connections of~$P_1$ and~$P_2$, 
respectively. We say that~$P_1$ and~$P_2$ are \emph{adjacent} if~$p_1$ 
and~$p_2$ 
are adjacent. Let~$l$ be a leaf that is not contained 
in~$P_1$. We say that~$l$ and~$P_1$ are \emph{adjacent} if the neighbor of~$l$ 
is adjacent to~$p_1$. We say that~$P_1$ is \emph{adjacent} to a chain of 
leaves~$(a,k)$ if~$P_1$ is adjacent to an end-leaf of~$(a,k)$.
%

\begin{lemma}\label{lem:BadBlob2}
	Let~$N$ be a level-$2$ network on~$X$ containing a bad pendant blob with~$3$ 
	leaves~$(a_1,a_2,a_3)$, and let~$N'$ denote the network obtained by 
	deleting~$a_2$ from~$N$. Then the shortest distance matrix realized by~$N'$ 
	is given by
	\[ d^{N'}(x,y) = 
	\begin{cases} 
	d^N(x,y)      & \text{if }x,y\in X-\{a_2\} \text{ and }\{x,y\}\ne 
	\{a_1,a_3\}; \\
	3  & \text{if }\{x,y\} = \{a_1,a_3\}.
	\end{cases}
	\]
\end{lemma}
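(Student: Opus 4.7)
The plan is to perform a case analysis on the structure of the bad pendant blob containing the chain $(a_1,a_2,a_3)$. By the definition of bad blobs with three leaves, this blob is either a pendant level-$1$ blob on a $4$-cycle with $a_1,a_2,a_3$ attached to three of its vertices (Option A), or a pendant level-$2$ blob of the form $(3,0,0,0)$ with the chain attached along one of the two main paths not containing the non-trivial cut-edge (Option B). In both cases let $v_i$ denote the neighbor of $a_i$ for $i\in\{1,2,3\}$, so that $v_1v_2v_3$ is the chain spine, and let $w$ denote the unique vertex of the bad blob incident to the non-trivial cut-edge. Deleting $a_2$ from $N$ removes the edge $a_2v_2$ and then suppresses the now degree-$2$ vertex $v_2$, replacing $v_1v_2$ and $v_2v_3$ by a single edge $v_1v_3$; the rest of $N$ is preserved verbatim.

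First I would verify the equality $d^{N'}(a_1,a_3) = 3$. A direct inspection of the two blob types shows that in $N$ the path $a_1 v_1 v_2 v_3 a_3$ of length $4$ is a shortest $a_1$-to-$a_3$ path. In $N'$ the path $a_1 v_1 v_3 a_3$ of length $3$ exists thanks to the new edge $v_1v_3$, and any alternative path must leave the shortened spine and detour through $w$ in Option A, or through $w$ or the poles in Option B, giving length at least $4$. Hence $d^{N'}(a_1,a_3) = 3$.

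Next I would establish preservation of all other distances by distinguishing whether the leaves lie inside or outside the bad blob. If both $x$ and $y$ lie outside, every shortest $x$-to-$y$ path in either network stays entirely outside the blob (since the blob is separated from the rest of the network by the single non-trivial cut-edge through $w$), and the two networks agree outside the blob. If exactly one of $\{x,y\}$ lies in $\{a_1,a_3\}$, say $y=a_1$ and $x$ is outside the blob, then every $x$-to-$y$ path in either network must traverse the non-trivial cut-edge and pass through $w$, so $d(x,y) = d(x,w) + d(w,a_1)$. Here $d(x,w)$ is unchanged, and a direct check shows $d(w,a_1)$ equals $2$ in Option A (via $w v_1 a_1$) and $3$ in Option B (via $w p_1 v_1 a_1$); both use only $v_1$ and are unaffected by the removal of $v_2$.

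The main obstacle is ensuring no overlooked shortcut is created when the edges $v_1v_2$ and $v_2v_3$ are merged: the only distance affected by this suppression is $d(v_1,v_3)$, which drops from $2$ to $1$, and the only surviving leaf pair whose $N$-shortest path genuinely uses this detour is $\{a_1,a_3\}$, which is handled explicitly. For every other pair of surviving leaves, an equally short route avoiding $v_2$ exists in $N$ (either outside the blob entirely or routed through $w$), and the same route persists in $N'$, so no strict decrease occurs and the claimed equality holds.
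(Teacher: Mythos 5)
Your proof is correct and follows essentially the same argument as the paper's (much terser) proof: the only leaf-pair shortest paths that use the edges incident to $a_2$'s neighbor $v_2$ are those involving $a_2$ itself or the path between $a_1$ and $a_3$, so suppressing $v_2$ only shortens $d(a_1,a_3)$ to $3$. Your explicit case split into the two possible bad blobs and the check that no new shortcut through the merged edge $v_1v_3$ arises are just a more detailed verification of the same observation.
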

\begin{proof}
	The only shortest paths containing the edges incident to the neighbor 
	of~$a_2$ in~$N$ were those involving~$a_2$, or a path between~$a_1$ 
	and~$a_3$. Since~$a_2$ is no longer a leaf in~$N'$, the only path that is 
	affected in the leaf deletion is the shortest path between~$a_1$ and~$a_3$, which is 
	now of length~$3$ in~$N'$.
\end{proof}

We are now ready to prove the main theorem of the section. Because the proof 
exhaustively checks for contradictions within each case, it is rather long, and 
so we split the three main cases of the proof into 
subsubsections. In each 
subsubsection, a short summary will be given to clarify the proof steps. 

\begin{theorem}\label{thm:NoAltPath}
	A level-$2$ network containing no alt-path structure is reconstructible 
	from its shortest distances.
\end{theorem}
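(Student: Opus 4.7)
The plan is to prove the theorem by strong induction on the number of edges of the level-$2$ network~$N$. The base case, where~$N$ consists of a single blob, is handled by Lemma~\ref{lem:L1}. For the inductive step, given~$N$ with no alt-path structure, it suffices to identify at least one pendant blob (or bad-blob substructure) uniquely from the shortest distance matrix; once this is done, we reduce the identified piece to a leaf and update the distance matrix via Lemmas~\ref{lem:AdjustDistances} and~\ref{lem:BadBlob2}, yielding a smaller level-$2$ network~$N'$ that still contains no alt-path structure. By the inductive hypothesis~$N'$ is reconstructible, and reattaching the identified piece recovers~$N$ up to isomorphism.

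By Theorem~\ref{thm:CutEdgeSplit} and Lemma~\ref{lem:IdentifyBlobsFromSplits}, every pendant blob that is not bad is already distinguishable from the shortest distance matrix, so the whole difficulty lies in distinguishing the two possible types of a bad pendant blob: a pendant level-$1$ blob with~$2$ or~$3$ leaves versus a pendant level-$2$ blob of the form~$(k,0,0,0)$ for~$k\in\{2,3\}$. These have identical internal shortest distances, but the level-$2$ variant is effectively ``one edge longer'' than its level-$1$ counterpart, so the shortest distances from its leaves to the rest of the network differ by one. A misidentification can therefore only preserve the entire shortest distance matrix if it is compensated by a second bad-pendant-blob swap elsewhere, together with compatible changes along the intermediate non-pendant level-$2$ blobs on the blob-tree path between them. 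Lemma~\ref{lem:BadBlob2} will also allow us to assume, by a preliminary reduction, that every bad pendant blob carries exactly two leaves, which keeps the bookkeeping manageable.

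My plan is then to show that any such coordinated swap forces the subnetwork spanned by the two bad pendant blobs and all intermediate non-pendant blobs to be an alt-path structure embedded in~$N$: each intermediate non-pendant level-$2$ blob must have its three incident cut-edges distributed on its three distinct main paths, matching exactly the gadget that replaces a black internal vertex in the alt-path construction. The underlying binary tree~$T$ is then the blob-subtree spanned by the swapped blobs, and the black/red two-coloring is dictated by which side of each compensating swap a blob lies on. The contrapositive conclusion is that if~$N$ is not reconstructible from its shortest distances, then~$N$ contains an alt-path structure, contradicting our hypothesis.

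The main obstacle will be the detailed local analysis ensuring that intermediate blobs along any compensating path must take precisely this alt-path form, ruling out ``partial'' swaps or alternative internal arrangements of cut-edges on the main paths. This requires tracking how each edge on a shortest path between leaves on opposite sides of the swap contributes to the total length, and splitting cases according to how the cut-edges of an intermediate blob are distributed among its main paths and to the positions of adjacent bad pendant blobs; this enumeration is what forces the authors' three-case split into subsubsections.
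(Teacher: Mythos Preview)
Your high-level strategy---reduce all non-bad pendant blobs, then analyse the residual bad-blob configuration---matches the paper's framework, which uses a minimal counter-example pair $(N,N')$ in place of explicit induction. Two points, however, separate your plan from a working proof.

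First, the reduction step is not as clean as you present it. When you replace a non-bad pendant blob $P$ by a leaf $z$, the blob that previously contained the connection of $P$ can become a bad pendant blob in the reduced network, so $N'$ may acquire an alt-path structure that $N$ did not have; your induction hypothesis would then not apply to $N'$. The paper avoids this by working with a minimal counter-example \emph{pair} and reducing both networks simultaneously; you should do likewise rather than reducing $N$ alone.

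Second, and more substantively, your central mechanism---that a mislabelled bad pendant blob must be ``compensated by a second bad-pendant-blob swap elsewhere, together with compatible changes along the intermediate non-pendant level-$2$ blobs on the blob-tree path between them''---is where the plan falls short. The phrasing (``a second swap'', ``path between them'') suggests two swapped blobs joined by a path, whereas alt-path structures come from arbitrary binary trees and there is no a priori reason the differences between $N$ and $N'$ localise to two pendant blobs. The paper does \emph{not} attempt to globally assemble an alt-path structure from the discrepancy. Instead it fixes a single peripheral blob-vertex of the common blob-tree (one with all neighbours leaves except one), and runs an exhaustive case split on whether the structure $B$ there is a degree-$3$ vertex, a level-$1$ blob, or a level-$2$ blob, comparing against the corresponding $B'$ in $N'$. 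The engine driving every contradiction in that analysis is Claim~\ref{cla:DistToCE}: the quantity $d^N(x,u)-d^{N'}(x,u')$ is the same for every leaf $x$ on the $\bar B$ side of the cut-edge. You do not mention this invariant, yet without it the ``tracking how each edge on a shortest path contributes'' that you describe has no anchor. Identifying this invariant first, and organising the case analysis locally at one peripheral blob rather than globally along putative compensating paths, is what makes the argument go through.
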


\begin{proof}
	Suppose for a contradiction that there exist distinct networks~$N$ and~$N'$ 
	with no alt-path structures that realize the same shortest distance matrix. 
	In particular, choose~$N$ and~$N'$ to be minimal counter-examples with 
	respect to the size of the networks. This means that every pendant blob of 
	$N$ and~$N'$ must be a bad blob; indeed, all other pendant 
	blobs are identifiable from the shortest distance matrix by 
	Lemma~\ref{lem:AdjustDistances}, and thus can be reduced 
	otherwise, allowing for smaller counter-examples to exist. We may 
	further assume that all pendant bad blobs contain exactly two leaves, as 
	otherwise we can find a smaller counter-example by calling 
	Lemma~\ref{lem:BadBlob2}. Finally, observe that if~$N$ contains a pendant 
	level-$1$ blob with some leaves~$l_1,l_2$, then~$N'$ must contain a pendant 
	level-$2$ blob of the form~$(2,0,0,0)$ containing the leaves~$l_1,l_2$, as 
	again we would be able to obtain a smaller counter-example otherwise. 
	Similarly, if~$N$ contains a pendant level-$2$ blob of the form~$(2,0,0,0)$ 
	with leaves~$l_1,l_2$ then~$N'$ must contain a pendant level-$1$ blob with 
	leaves~$l_1,l_2$. If~$N$ contains a pendant blob~$P_i$, we say that~$N'$ 
	contains the \emph{corresponding} pendant blob~$P'_i$ on the same leaves 
	such that~$l(P_i)\neq l(P'_i)$. For each pendant blob~$P_i, P'_i$ 
	in~$N,N'$, we let~$p_i,p'_i$ denote their connections, respectively.
		
	
	Since~$N$ and~$N'$ realize the same shortest distance matrix, the two 
	networks have the same cut-edge induced splits by 
	Theorem~\ref{thm:CutEdgeSplit}. Since each cut-edge in our networks induces 
	a unique split, this implies that their blob-trees must be identical. 
	\rev{This follows as every edge of the blob tree is a cut-edge of the network, and because trees are uniquely determined by their induced splits~\citep{buneman1971recovery}.}
	Note 
	that the blob-vertices of the blob-trees correspond to either a 
	degree-$3$ vertex, a level-$1$ blob, or a level-$2$ blob of the network. 
	Observe that the blob-tree of~$N$ must contain at least~$3$ blob-vertices, 
	as otherwise~$N$ 
	would be a level-$2$ network with a single blob -- which is reconstructible 
	from their shortest distances by Lemma~\ref{lem:L1} -- or a level-$2$ 
	network with two pendant 
	blobs, which implies that~$N$ must contain an alt-path structure as~$N$ 
	contains only bad pendant blobs, or~$N$ 
	and~$N'$ cannot realize the same shortest distance matrix. 
	
	Consider the blob-vertex~$u$ whose neighbors are all leaves, except 
	possibly for one neighbor; let~$uv$ denote the edge in the blob-tree to 
	this one neighbor. Since every edge of the blob-tree corresponds to a 
	non-trivial cut-edge in the network, the edge~$uv$ must be incident to 
	a degree-$3$ vertex / blob which correspond to~$u$ in~$N$ and~$N'$. 
	Let~$B,B'$ denote the corresponding structures in~$N,N'$, respectively.  
	Then~$B$ contains at least one pendant blob, possibly some chain of leaves, 
	and the cut-edge~$uv$ incident to it, with~$u$ as a vertex of~$B$. The same 
	can be said for~$B'$, but we use~$u'$ instead of~$u$ to be the vertex 
	of~$B'$ incident to the cut-edge for clarity. Note that it is possible 
	for~$u$ and~$u'$ to be a connection of some pendant blob. Let~$\bar{B}$ be 
	the graph obtained from~$N$ by deleting the edge~$uv$ and taking the 
	component containing~$u$. Similarly define~$\bar{B'}$ as the graph obtained 
	from~$N'$ by deleting the edge~$u'v$ and taking the component 
	containing~$u'$. 
	Since we have deleted the edges corresponding to the same edge in the 
	blob-tree,~$\bar{B}$ and~$\bar{B'}$ contain the same chains, and~$\bar{B}$ 
	contains a pendant blob if and only if~$\bar{B'}$ contains the 
	corresponding pendant blob.
	Observe 
	that~$\bar{B}$ and~$\bar{B'}$ are not 
	networks because they contain a degree-$2$ vertex~$u$ and~$u'$, 
	respectively. However, to avoid having to introduce new notation, we shall 
	still use terms defined for networks, such 
	as blobs containing pendant blobs, pendant blobs being adjacent to one 
	another in~$\bar{B}$ and~$\bar{B'}$.
	
	The rest of the proof will be as follows. We consider the cases where~$B$ 
	is 
	a degree-$3$ vertex, a level-$1$ blob, or a level-$2$ blob. Based on the 
	graph~$\bar{B}$ in comparison with the graph~$\bar{B'}$, we seek a 
	contradiction with regards to the 
	networks realizing the shortest distance matrix and the choice of the 
	minimum counter-example. Because the results are symmetric, it is worth 
	mentioning that once we have proven the case for when~$B$ is a 
	degree-$3$ vertex, then we may assume that~$B'$ is also not a degree-$3$ 
	vertex. After we prove the case for when~$B$ is a level-$1$ blob, then 
	we may assume that~$B'$ is also not a level-$1$ blob.
	
	The following claim will be used extensively throughout the proof.
	\begin{claim}
	\label{cla:DistToCE}
		Let~$x,y$ be leaves in~$\bar{B}$. 
		Then \[d^N(x,u)-d^{N'}(x,u') = d^N(y,u) - d^{N'}(y,u').\]
	\end{claim}
	\begin{proof}
		Let~$z$ be a leaf of~$N$ that is not in~$\bar{B}$. Such a leaf must 
		exist by our choice of~$B$, and in particular,~$z$ must be reachable 
		from~$B$ via~$uv$. Then we have
		\begin{align*}
		d^N(z,x) &= d^N(z,u) + d^N(u,x)\\
		d^N(z,y) &= d^N(z,u) + d^N(u,y),
		\end{align*}
		which gives
		\[d^N(z,x) - d^N(z,y) = d^N(x,u) - d^N(y,u).\]
		Similarly we have
		\[d^{N'}(z,x) - d^{N'}(z,y) = d^{N'}(x,u') - d^{N'}(y,u').\]
		Since the shortest distance matrices of~$N$ and~$N'$ are the same, we 
		have
		\[d^N(x,u)-d^{N'}(x,u') = d^N(y,u) - d^{N'}(y,u').\]
	\end{proof}
	
	\subsubsection{$\bm{B}$ is a degree-$\bm{3}$ vertex 
	in~$\bm{N}$:}\label{subsub:deg3} 
	Suppose first that a leaf~$l$ is a neighbor of~$u$ in~$\bar{B}$, and 
	let~$P_1$ be a pendant blob in~$\bar{B}$ whose connection is~$u$. 
	Since~$N'$ has the same cut-edge induced splits,~$\bar{B'}$ must either be 
	a degree-$3$ vertex or a blob that contains~$l$ and the corresponding 
	pendant blob~$P'_1$. But then
	\[d^N(P_1,l) = 4,\]
	whereas
	\[d^{N'}(P'_1,l) \ge 5,\]
	which contradicts the fact that~$N$ and~$N'$ must realize the same shortest 
	distance matrix.
	
	So now suppose that~$u$ is the connection of two pendant blobs~$P_1$ 
	and~$P_2$ in~$\bar{B}$. We check the three possible scenarios with regards 
	to the levels of~$P_1$ and~$P_2$.
	\begin{enumerate}
		\item \textbf{$\bm{l(P_1)=1}$ and $\bm{l(P_2)=1}$:} Then we 
		have~$d^N(P_1,P_2) = 6$. But since~$l(P'_1) = l(P'_2) = 
		2$, we must have that~$d^{N'}(P'_1,P'_2) \ge 8$. This contradicts the 
		fact that~$N$ and~$N'$ realize the same shortest distance matrix.
		\item \textbf{$\bm{l(P_1)=1}$ and $\bm{l(P_2)=2}$:} Then we 
		have~$d^N(P_1,P_2) = 7$. Since~$l(P'_1) = 2$ and~$l(P'_2) = 1$, we have 
		that~$d^{N'}(P'_1,P'_2) \ge 7$, where equality is achieved 
		whenever~$u'$ is a degree-$3$ vertex. But this would mean that
		\[d^N(P_1,u)-d^{N'}(P'_1,u') = -1,\]
		whereas
		\[d^N(P_2,u) - d^{N'}(P'_2,u') = 1,\]
		which contradicts Claim~\ref{cla:DistToCE}.
		\item \textbf{$\bm{l(P_1)=2}$ and $\bm{l(P_2)=2}$ (see 
		Figure~\ref{fig:thm1c} for an illustration of the cases):} 
		Then~$d^N(P_1,P_2) = 8$. Since~$l(P'_1) = l(P'_2) = 1$, and 
		because~$N'$ contains a split with one of the sets containing 
		exactly the leaves of~$P'_1$ and~$P'_2$,~$B'$ must be a level-$2$ 
		blob.
		In particular,~$P'_1$ and~$P'_2$ cannot be adjacent. There are two 
		possibilities for this -- $u'$ is a neighbor of one of~$p'_1$ 
		or~$p'_2$ but not the other, or all three vertices~$u',p'_1,p'_2$ are 
		pairwise non-adjacent (i.e., they all lie on different main paths 
		of~$\bar{B'}$). In the former case, we have that, assuming without loss 
		of generality that~$u'$ is adjacent to~$p'_1$,
		\[d^N(P_1,u)-d^{N'}(P'_1,u') = 4-4 = 0,\] 
		whereas
		\[d^N(P_2,u) - d^{N'}(P'_2,u') = 4-5 = -1,\]
		which contradicts Claim~\ref{cla:DistToCE}. 
		For the latter case, we claim that there is a smaller counter-example. 
		We replace~$\bar{B}$ in~$N$ by a pendant level-$1$ blob~$P_3$ 
		containing two leaves~$l_1,l_2$. We replace~$\bar{B'}$ in~$N'$ by a 
		pendant level-$2$ blob~$P'_3$ of the form~$(2,0,0,0)$ with the same 
		leaves~$l_1,l_2$. Then, we may adjust the shortest distance matrix by 
		first deleting elements containing the leaves of~$P_1$ and~$P_2$. And 
		for all other leaves~$z$ in the network, we add the elements
		\[d^N(l_i,z) = d^N(P_1,z) - 2,\]
		and
		\[d^{N'}(l_i,z) = d^N(P'_1,z) - 2\]
		for~$i=1,2$. All other matrix elements remain the same. Note that 
		before the replacement of the blobs,
		\[d^N(P_1,u) = d^N(P_2,u) = 4,\]
		and
		\[d^{N'}(P'_1,u') = d^{N'}(P'_2,u') = 5.\]
		The replacement of~$\bar{B}$ and~$\bar{B'}$ by pendant level-$1$ and 
		level-$2$ blobs, respectively ensures that the distance differences are 
		preserved. Therefore the modified networks both must satisfy this new 
		reduced shortest distance matrix. These modified networks~$N$ and~$N'$ 
		still contain no alt-path structures, as otherwise the original 
		networks also must have contained alt-path structures; all other parts 
		of the networks remain unchanged, and the two leaves~$l_1$ and~$l_2$ 
		are contained in pendant blobs of different level in~$N$ and~$N'$. 
		Therefore, this gives a counter-example on fewer leaves than that 
		of~$N$ and~$N'$, contradicting our original choice of~$N$ and~$N'$. 
	\end{enumerate}

	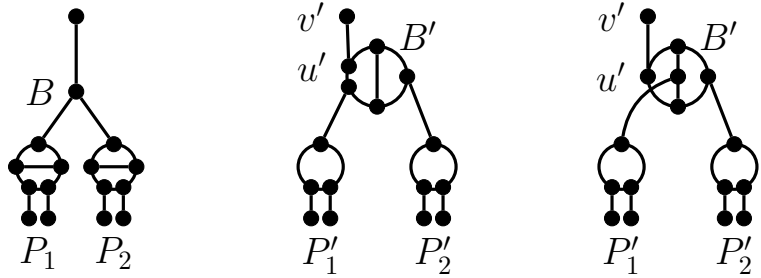
\begin{figure}
		\centering
		\begin{tikzpicture}
		[every node/.style={draw, circle, fill, inner sep=0pt, minimum 
		size=2mm}]
		\tikzset{edge/.style={very thick}}
		
		\node[] (v) at (0,1) {};
		\node[] (B) at (0,0) {}
		node[draw = none, fill = none, left=1mm of B]{\large $B$};
		
		\draw[very thick, black] (-0.5,-1) circle (0.3);
		\node[] (b1n) at ({-0.5 + 0.3*cos(90)}, {-1 + 0.3*sin(90)}) {};
		\node[] (b1w) at ({-0.5 + 0.3*cos(180)}, {-1 + 0.3*sin(180)}) 
		{};
		\node[] (b1e) at ({-0.5 + 0.3*cos(0)}, {-1 + 0.3*sin(0)}) {};
		\node[] (b1l1) at ({-0.5 + 0.3*cos(245)}, {-1 + 0.3*sin(245)}) 
		{};
		\node[] (b1l2) at ({-0.5 + 0.3*cos(295)}, {-1 + 0.3*sin(295)}) 
		{};
		\node[below=2mm of b1l1] (b1l1l) {};
		\node[below=2mm of b1l2] (b1l2l) {};
		\node[draw=none, fill=none, below=10mm of b1n] {\large $P_1$};
						
		\draw[very thick, black] (0.5,-1) circle (0.3);
		\node[] (b2n) at ({0.5 + 0.3*cos(90)}, {-1 + 0.3*sin(90)}) {};
		\node[] (b2w) at ({0.5 + 0.3*cos(180)}, {-1 + 0.3*sin(180)}) {};
		\node[] (b2e) at ({0.5 + 0.3*cos(0)}, {-1 + 0.3*sin(0)}) {};
		\node[] (b2l1) at ({0.5 + 0.3*cos(245)}, {-1 + 0.3*sin(245)}) 
		{};
		\node[] (b2l2) at ({0.5 + 0.3*cos(295)}, {-1 + 0.3*sin(295)}) 
		{};
		\node[below=2mm of b2l1] (b2l1l) {};
		\node[below=2mm of b2l2] (b2l2l) {};
		\node[draw=none, fill=none, below=10mm of b2n] {\large $P_2$};
		
		\draw[edge] 
		(B) -- (b1n)
		(B) -- (b2n)
		(b1w) -- (b1e)
		(b2w) -- (b2e)
		(b1l1) -- (b1l1l)
		(b1l2) -- (b1l2l)
		(b2l1) -- (b2l1l)
		(b2l2) -- (b2l2l)
		(B) -- (v);

		\begin{scope}[xshift=8cm,yshift=0cm]
		\draw[very thick, black] (0,0.2) circle (0.4)
		node[draw = none, fill = none, above right =3mm and 3mm]{\large 
		$B'$};
		\node[] (Bn) at ({0.4*cos(90)},{0.2+ .4*sin(90)}) {};
		\node[] (Bs) at ({0.4*cos(270)},{0.2+ .4*sin(270)}) {};
		\node[] (Bw) at ({0.4*cos(180)},{0.2+ .4*sin(180)}) {}
		node[draw=none, fill=none, left=1mm of Bw] {\large $u'$};
		\node[] (Be) at ({0.4*cos(0)},{0.2+ .4*sin(0)}) {};
		\node[] (center) at (0,0.2) {};
		\node[] (v) at (-0.4, 1) {}
		node[draw=none, fill=none, left=1mm of v] {\large $v'$};
		
		\draw[very thick, black] (-0.75,-1) circle (0.3);
		\node[] (b1n) at ({-0.75 + 0.3*cos(90)}, {-1 + 0.3*sin(90)}) {};
		\node[] (b1l1) at ({-0.75 + 0.3*cos(245)}, {-1 + 0.3*sin(245)}) 
		{};
		\node[] (b1l2) at ({-0.75 + 0.3*cos(295)}, {-1 + 0.3*sin(295)}) 
		{};
		\node[below=2mm of b1l1] (b1l1l) {};
		\node[below=2mm of b1l2] (b1l2l) {};
		\node[draw=none, fill=none, below=10mm of b1n] {\large $P'_1$};
		
		\draw[very thick, black] (0.75,-1) circle (0.3);
		\node[] (b2n) at ({0.75 + 0.3*cos(90)}, {-1 + 0.3*sin(90)}) {};
		\node[] (b2l1) at ({0.75 + 0.3*cos(245)}, {-1 + 0.3*sin(245)}) 
		{};
		\node[] (b2l2) at ({0.75 + 0.3*cos(295)}, {-1 + 0.3*sin(295)}) 
		{};
		\node[below=2mm of b2l1] (b2l1l) {};
		\node[below=2mm of b2l2] (b2l2l) {};
		\node[draw=none, fill=none, below=10mm of b2n] {\large $P'_2$};
		
		\draw[edge] 
		(Bn) -- (Bs)
		(Bw) -- (v)
		(Be) -- (b2n)
		(b1l1) -- (b1l1l)
		(b1l2) -- (b1l2l)
		(b2l1) -- (b2l1l)
		(b2l2) -- (b2l2l);
		
		\draw[edge, bend right] (center) edge (b1n);
		\end{scope}
		
		\begin{scope}[xshift=4cm,yshift=0cm]
		\draw[very thick, black] (0,0.2) circle (0.4)
		node[draw = none, fill = none, above right= 3mm and 3mm]{\large 
		$B'$};
		\node[] (Bn) at ({0.4*cos(90)},{0.2+ .4*sin(90)}) {};
		\node[] (Bs) at ({0.4*cos(270)},{0.2+ .4*sin(270)}) {};
		\node[] (Bw) at ({0.4*cos(200)},{0.2+ .4*sin(200)}) {};
		\node[] (Be) at ({0.4*cos(0)},{0.2+ .4*sin(0)}) {};
		\node[] (Bu) at ({0.4*cos(160)},{0.2+ .4*sin(160)}) {}			
		node[draw=none, fill=none, left=1mm of Bu] {\large $u'$};
		\node[] (v) at (-0.4, 1) {}			
		node[draw=none, fill=none, left=1mm of v] {\large $v'$};
		
		\draw[very thick, black] (-0.75,-1) circle (0.3);
		\node[] (b1n) at ({-0.75 + 0.3*cos(90)}, {-1 + 0.3*sin(90)}) {};
		\node[] (b1l1) at ({-0.75 + 0.3*cos(245)}, {-1 + 0.3*sin(245)}) 
		{};
		\node[] (b1l2) at ({-0.75 + 0.3*cos(295)}, {-1 + 0.3*sin(295)}) 
		{};
		\node[below=2mm of b1l1] (b1l1l) {};
		\node[below=2mm of b1l2] (b1l2l) {};
		\node[draw=none, fill=none, below=10mm of b1n] {\large $P'_1$};
		
		\draw[very thick, black] (0.75,-1) circle (0.3);
		\node[] (b2n) at ({0.75 + 0.3*cos(90)}, {-1 + 0.3*sin(90)}) {};
		\node[] (b2l1) at ({0.75 + 0.3*cos(245)}, {-1 + 0.3*sin(245)}) 
		{};
		\node[] (b2l2) at ({0.75 + 0.3*cos(295)}, {-1 + 0.3*sin(295)}) 
		{};
		\node[below=2mm of b2l1] (b2l1l) {};
		\node[below=2mm of b2l2] (b2l2l) {};
		\node[draw=none, fill=none, below=10mm of b2n] {\large $P'_2$};
		
		\draw[edge] 
		(Bn) -- (Bs)
		(Bw) -- (b1n)
		(Be) -- (b2n)
		(b1l1) -- (b1l1l)
		(b1l2) -- (b1l2l)
		(b2l1) -- (b2l1l)
		(b2l2) -- (b2l2l)
		(Bu) -- (v);
		
		\end{scope}
		\end{tikzpicture}
		\caption{Subsubsection~\ref{subsub:deg3} case 3 in the proof of 
		Theorem~\ref{thm:NoAltPath}. The 
		left figure is the part of the network~$N$ containing the 
		internal vertex~$B$ and its neighboring pendant blobs. The middle 
		and the right figures are the two subcases for what~$N'$ could look 
		like. In the middle network, two cut-edges are incident to the same 
		side; in the right network, the three cut-edges are incident to 
		distinct sides of~$B'$.}
		\label{fig:thm1c}
	\end{figure}

	\subsubsection{$\bm{B}$ is a level-$\bm{1}$ blob in~$\bm{N}$:}
	
	We may now also assume that~$B'$ is either a level-$1$ or a level-$2$ blob. 
	Note that either~$\bar{B}$ or~$\bar{B'}$ must contain a pendant level-$1$ 
	blob, and that we shall obtain a contradiction in each of those cases. 
	Before we do so, 
	we prove a claim that will be used in many of the arguments to 
	come. In the following claim, we assume that $B$ is either a level-$1$ or a 
	level-$2$ blob.
	
	\begin{claim}\label{cla:4Statements}
		Suppose~$l(B),l(B')\in\{1,2\}$, and suppose that~$\bar{B}$ contains a 
		pendant level-$1$ blob~$P_1$. Then,
		\begin{enumerate}[label=(\roman*)]
			\item $P_1$ cannot be adjacent to a chain of leaves~$(a,k)$ 
			in~$\bar{B}$;
			\item $P_1$ cannot be adjacent to another pendant level-$1$ blob 
			in~$\bar{B}$;
			\item $P_1$ is adjacent to a pendant level-$2$ blob~$P_2$ 
			in~$\bar{B}$ if and only if~$P'_1$ and~$P'_2$ are adjacent 
			in~$\bar{B'}$;
			\item $P_1$ is adjacent to at most one pendant level-$2$ blob 
			in~$\bar{B}$. In particular, this means that every pendant 
			level-$2$ blob in~$\bar{B}$ is adjacent to at most one pendant 
			level-$1$ blob.
			\item $P_1$ can be shortest distance~$6$ away from at most two 
			end-leaves of distinct chains in~$\bar{B}$.
		\end{enumerate}
	\end{claim}
	\begin{proof}
		\begin{enumerate}[label=(\roman*)]
			\item If~$P_1$ is adjacent to a chain of leaves~$(a,k)$, then one 
			of the end-leaves of~$(a,k)$ must be shortest distance~$5$ away 
			from~$P_1$. Without loss of generality, suppose that~$d^N(P_1,a_1) 
			= 5$. In~$B'$, since~$l(P') = 2$, we must have 
			that~$d^{N'}(P'_1,a_1)\ge 6$, which contradicts the fact that~$N$ 
			and~$N'$ must realize the same shortest distance matrix.
			\item If~$P_1$ is adjacent to another pendant level-$1$ blob~$P_2$, 
			then~$d^N(P_1,P_2) = 7$. In~$\bar{B'}$, both corresponding pendant 
			blobs are of level-$2$. This means that~$d^{N'}(P'_1,P'_2) \ge 8$, 
			which contradicts the fact that~$N$ and~$N'$ must satisfy the same 
			shortest distance matrix. 
			\item If~$P_1$ is adjacent to~$P_2$, then we have~$d^N(P_1,P_2) = 
			8$. Since~$N$ and~$N'$ satisfy the same shortest distance matrix, 
			one must also have~$d^{N'}(P'_1,P'_2) = 8$. The shortest distance 
			from~$P'_1$ to its connection~$p'_1$, and that from~$P'_2$ to its 
			connection~$p'_2$ is~$3$ and~$4$, respectively. Since~$l(B') 
			\in\{1,2\}$, the vertices~$p'_1$ and~$p'_2$ cannot be the same. 
			Therefore to satisfy~$P'_1$ and~$P'_2$ being shortest distance~$8$ 
			from one another in~$\bar{B'}$, one must have that~$p'_1$ 
			and~$p'_2$ are adjacent, which means that~$P'_1$ and~$P'_2$ must be 
			adjacent. The converse follows by symmetry.
			\item Suppose for a contradiction that~$P_1$ is adjacent to two 
			pendant level-$2$ blobs~$P_2$ and~$P_3$ in~$\bar{B}$. 
			Then~$d^N(P_2,P_3) = 10$. By 3., we know that~$P'_1$ must be 
			adjacent to both~$P'_2$ and~$P'_3$ in~$\bar{B'}$. We also know 
			that~$l(P'_2) = l(P'_3) = 1$. It follows that~$d^{N'}(P'_2,P'_3) = 
			8$. But this contradicts the fact that~$N$ and~$N'$ must 
			realize the same shortest distance matrix. If a pendant level-$2$ 
			blob is adjacent to two pendant level-$1$ blobs, then the 
			corresponding pendant level-$1$ blob in~$\bar{B'}$ is adjacent to 
			two pendant level-$2$ blobs, which we have just shown cannot be 
			true.
			\item Suppose~$l$ is an end-leaf of a chain such that~$d^N(P_1,l) 
			=6$. Since~$l(P'_1) = 2$, and since~$N$ and~$N'$ realize the same 
			shortest distance matrix, the leaf~$l$ must be adjacent to~$P'_1$ 
			in~$\bar{B'}$. So every leaf that is shortest distance~$6$ away 
			from~$P_1$ in~$\bar{B}$ must be adjacent to~$P'_1$ in~$\bar{B'}$.  
			Note that~$P'_1$ can be adjacent to at most two chains. These 
			chains must be distinct in~$\bar{B'}$, since~$u'$ is contained 
			in~$\bar{B'}$. This implies that in~$\bar{B}$,~$P_1$ can be 
			shortest distance~$6$ away from at most two end-leaves of distinct 
			chains.
		\end{enumerate}
	\end{proof}
	
	It follows that each main path of~$\bar{B}$ (and~$\bar{B'}$) may contain at 
	most two pendant level-$1$ blobs. Either~$\bar{B}$ or~$\bar{B'}$ must 
	contain a pendant level-$1$ blob. 
	\begin{enumerate}
		\item \textbf{$\bm{\bar{B}}$ has a pendant level-$\bm{1}$ 
		blob~$\bm{P_1}$:} Since~$N$ contains no parallel edges, and since 
		leaves cannot be adjacent to pendant level-$1$ blobs, $P_1$ must be 
		adjacent to a pendant level-$2$ blob~$P_2$ in~$\bar{B}$. By 
		Claim~\ref{cla:4Statements} $(iv)$,~$P_1$ can be adjacent to at most 
		one 
		pendant level-$2$ blob in~$\bar{B}$. This implies that~$p_1$ must be 
		adjacent to~$u$. In~$N'$, the corresponding pendant blobs~$P'_1$ 
		and~$P'_2$ are adjacent by Claim~\ref{cla:4Statements} $(iii)$. Then we 
		have 
		that 
		\[d^N(P_1,u) - d^N(P_2,u) \le 4-5 = -1.\]
		Observe that~$d^{N'}(P'_1,p'_1) = 4$ and~$d^{N'}(P'_2,p'_1) = 4$ 
		since~$l(P'_1) = 2$ and~$l(P'_2) = 1$. This implies that
		\begin{align*}
			d^{N'}(P'_1,u') - d^{N'}(P'_2,u') &\ge d^{N'}(P'_1,p'_1) + 
			d^{N'}(p'_1,u') - d^{N'}(P'_2,p'_1) - d^{N'}(p'_1,u')\\
			&= 4 - 4\\
			&= 0,
		\end{align*}
		which is a contradiction to Claim~\ref{cla:DistToCE}.
		\item \textbf{$\bm{\bar{B'}}$ has a pendant level-$\bm{1}$ 
		blob~$\bm{P'_1}$:} If~$l(B') = 1$, then we are done by 
		symmetry via case 1. So suppose that~$l(B') = 2$, and suppose in 
		addition that~$\bar{B}$ contains no pendant level-$1$ blobs. This 
		implies that~$\bar{B'}$ contains no pendant level-$2$ blobs. We claim 
		that~$\bar{B'}$ also contains no pendant level-$1$ blobs other 
		than~$P'_1$. Suppose for a contradiction that it did, so  
		that~$\bar{B'}$ contains another pendant level-$1$ blob~$P'_2$. 
		Because~$\bar{B'}$ contains no pendant level-$2$ blobs, and since 
		leaves cannot be adjacent to pendant level-$1$ blobs by 
		Claim~\ref{cla:4Statements}, we must have that~$p'_1$ and~$p'_2$ are 
		adjacent to the same pole, or that they must both be adjacent to~$u'$. 
		In any case, we must have~$d^{N'}(P'_1,P'_2) = 8$. But~$l(P_1) = l(P_2) 
		= 2$, and therefore~$d^N(P_1,P_2) \ge 9$, which is a contradiction. 
		So~$P'_1$ is the only pendant blob in~$\bar{B'}$.
		
		We now consider two possible cases: either~$p'_1$ is or is not adjacent 
		to~$u'$. 
		Either way, at least one main path of~$B'$ that does not contain~$p'_1$ 
		must contain a chain of leaves, as otherwise~$B'$ contains parallel 
		edges, or~$B'$ is a level-$2$ blob with only two cut-edges incident to 
		it. 
		
		\begin{enumerate}
			\item \textbf{$\bm{p'_1}$ is adjacent to~$\bm{u'}$:} 
			One of the main paths of~$\bar{B'}$ must contain a chain, 
			since~$N'$ does not contain parallel edges. So there must exist a 
			leaf~$l$ that is shortest distance~$6$ away from~$P'_1$. Then
			\[d^{N'}(P'_1,u')-d^{N'}(l,u') \le 4 - 3 = 1.\]
			On the other hand, in~$\bar{B}$, the leaf~$l$ is adjacent to~$P_1$; 
			then~$d^N(P_1,p_1) = 4$ and~$d^N(l,p_1) = 2$. It follows 
			that
			\begin{align*}
				d^N(P_1,u) - d^N(l,u) &\ge d^N(P_1,p_1) + d^N(p_1,u) - 
				d^N(l,p_1) - d^N(p_1,u)\\
				&= 2,
			\end{align*}
			which contradicts Claim~\ref{cla:DistToCE}.
			\item \textbf{$\bm{p'_1}$ is not adjacent to~$\bm{u'}$:} 
			Observe that~$\bar{B'}$ has five sides: two sides~$s'_1,s'_2$ which 
			have~$p'_1$ as one of their boundary vertices; two 
			sides~$s'_3,s'_4$ which have~$u'$ as one of their boundary 
			vertices; and one side~$s'_5$ that has neither~$p'_1$ nor~$u'$ as a 
			boundary vertex. By Claim~\ref{cla:4Statements} $(i)$ and~$(v)$, 
			the 
			sides~$s'_1$ and~$s'_2$ 
			are empty, and at most two of the three remaining sides 
			of~$\bar{B'}$ may contain chains. In particular, at least one of 
			these remaining three sides must contain a chain. We first show 
			that~$s'_5$ must be empty. Suppose not, and let~$(a,k)$ denote the 
			chain contained in~$s'_5$. Note that both end-leaves of~$(a,k)$ are 
			shortest distance~$6$ away from~$P'_1$, and so by 
			Claim~\ref{cla:4Statements} $(v)$, this chain must be of 
			length~$k=1$. 
			Since either~$s'_3$ or~$s'_4$ must be empty,
			\[d^{N'}(a_1,u') - d^{N'}(P'_1,u') = 3 - 5 = -2.\]
			In~$\bar{B}$, the blob~$P_1$ and the leaf~$a_1$ must be adjacent. 
			The vertex~$u$ must be adjacent to the neighbor of~$a_1$, 
			since~$\bar{B}$ contains no other pendant blobs. So we have
			\[d^N(a_1,u) - d^N(P_1,u) \le 2 - 5 = -3,\]
			which contradicts Claim~\ref{cla:DistToCE}. So~$s'_5$ is empty. 
			
			Now suppose that~$s'_3$ and~$s'_4$ contain the chains~$(b,\ell)$ 
			and~$(c,m)$, respectively, where at least one of~$\ell\ge 1$ 
			or~$m\ge 1$ holds. If~$\ell=1$ and~$m=0$, then~$\bar{B'}$, 
			together with the edge incident to~$u'$ is an alt-path structure of 
			a binary tree on two leaves. This contradicts our choice of~$N'$. 
			By symmetry, the case~$m=1$ and~$\ell=0$ is also not possible. So 
			we may assume that~$\ell\ge1$ and~$m\ge 1$. Suppose the chains are 
			arranged such that
			\[d^{N'}(b_1,u') = d^{N'}(c_1,u') = 2.\]
			If~$\ell>1$ or~$m>1$, then~$d^{N'}(b_\ell,c_m) = 5$, 
			whereas~$d^N(b_\ell,c_m) = 4$. This contradicts the fact that~$N$ 
			and~$N'$ realize the same shortest distance matrix. So we must
			have~$\ell=m=1$. But then~$\bar{B'}$, together with the edge 
			incident to~$u'$ is an alt-path structure of a binary tree on two 
			leaves. This contradicts our choice of~$N'$.

			\end{enumerate}
	\end{enumerate}

	\subsubsection{$\bm{B}$ is a level-$\bm{2}$ blob 
	in~$\bm{N}$:}\label{subsec:L2}
	
	Our only remaining case is if~$B$ and~$B'$ are both level-$2$ blobs. The 
	proofs of Claims \ref{cla:L2L2Bad}-\ref{cla:1L1Blob} are given in the 
	appendix.
	
	\begin{claim}\label{cla:L2L2Bad}
		Two pendant level-$2$ blobs cannot be adjacent to one another 
		in~$\bar{B}$ and in~$\bar{B'}$.
	\end{claim}

	An immediate consequence of Claim~\ref{cla:L2L2Bad} is that distinct 
	pendant level-1 blobs in~$\bar{B}$ or~$\bar{B'}$ must be distance at least 
	10 apart. In particular, they cannot be adjacent by 
	Claim~\ref{cla:4Statements} and they cannot be shortest distance-8 apart, 
	since two pendant level-$2$ blobs are shortest distance at least~$9$ apart. 
	The following claim dictates the placement of pendant blobs and 
	leaves in~$\bar{B}$.
	\begin{claim}\label{cla:L1LeafBad}
		A pendant level-$2$ blob may not be adjacent to both a pendant 
		level-$1$ blob and a leaf simultaneously in~$\bar{B}$ and in~$\bar{B'}$.
	\end{claim}

	Pendant level-$1$ blobs may be adjacent to at most one pendant 
	level-$2$ blob by Claim~\ref{cla:4Statements}. Pendant level-$2$ blobs 
	cannot be adjacent to other pendant level-$2$ blobs by 
	Claim~\ref{cla:L2L2Bad}. Pendant level-$1$ blobs cannot be adjacent to a 
	leaf by Claim~\ref{cla:L1LeafBad}. So the main path of~$\bar{B}$ 
	(and~$\bar{B'}$) that contains~$u$ ($u'$) contains at most two pendant
	level-$1$ blobs; the other two main paths of~$\bar{B}$ (and~$\bar{B'}$) 
	contain at most one pendant level-$1$ blob.
	
	\begin{claim}\label{cla:1L1Blob}
		$\bar{B}$ and~$\bar{B'}$ contain at most one pendant level-$1$ blob.
	\end{claim}
%
%
%
	We have now arrived at the two final cases for this proof. In summary, the 
	current 
	setting is as follows. Both~$\bar{B}$ and~$\bar{B'}$ are level-$2$ blobs, 
	and they both contain at most one pendant level-$1$ blob. In fact, this 
	implies that~$\bar{B}$ and~$\bar{B'}$ also contain at most one pendant 
	level-$2$ blob. We split into the cases for when~$\bar{B}$ does not, or 
	does contain a pendant level-$2$ blob.
	\begin{enumerate}
		\item \textbf{$\bm{\bar{B}}$ contains no pendant level-$\bm{2}$ blob:} 
		By assumption,~$\bar{B}$ must contain a pendant level-$1$ 
		blob~$P_1$. Let~$s$ denote the main path of~$B$ containing~$P_1$. 
		Note that~$s$ may contain at most one chain. Indeed,~$P_1$ cannot be 
		adjacent to a chain of leaves by Claim~\ref{cla:4Statements} $(i)$, 
		so~$p_1$ 
		must be adjacent to a pole of~$\bar{B}$; if~$p_1$ is adjacent to~$u$, 
		then~$s$ can contain a chain of leaves~$(a,k)$ such that an end-spine 
		vertex of~$(a,k)$ is adjacent to~$u$. The other two main paths 
		of~$\bar{B}$ may contain at most one chain of leaves each. So in 
		total,~$\bar{B}$ may contain at most three chains. 
		
		For each chain contained in~$\bar{B}$, we have that one end-leaf of a 
		chain
		is shortest distance-$6$ from~$P_1$. This means that each chain 
		contained in~$\bar{B}$ must be adjacent to~$P'_1$ in~$\bar{B'}$. 
		But~$P'_1$ may be adjacent to at most two chains. So~$\bar{B}$ may 
		contain up to two chains. This also implies 
		that~$\bar{B'}$ only contains leaves on the main path that 
		contains~$p'_1$. Since~$\bar{B'}$ contains no parallel edges, we must 
		then have that~$u'$ lies on a main path that does not contain~$p'_1$.
		
		Note that~$\bar{B}$ must contain at least one chain, as otherwise~$B$ 
		would be a level-$2$ blob incident only to two cut-edges. We now split 
		into subcases depending on the location of~$u$.
		\begin{enumerate}
			\item \textbf{$\bm{u}$ is adjacent to~$\bm{p_1}$:} 
			Then~$d^N(P_1,u) = 4$. Since~$u'$ is not on the same main path as 
			that containing~$P'_1$, we have~$d^{N'}(P'_1,u')\ge 6$. Now there 
			exists a leaf~$l$ such that~$d^{N'}(l,u') = 3$. Noting 
			that~$d^N(l,u) \ge 2$, we have 
			\[d^N(P_1,u) - d^N(l,u) \le 2\]
			and
			\[d^{N'}(P'_1,u') - d^{N'}(l,u') \ge 3,\]
			which contradicts Claim~\ref{cla:DistToCE}.
			\item \textbf{$\bm{u}$ is not adjacent to~$\bm{p_1}$:} 
			We let~$s,s_1,s_2$ denote the three main paths of~$\bar{B}$ such 
			that~$s$ contains~$p_1$,~$s_1$ contains~$u$, and~$s_2$ contains 
			neither~$p_1$ nor~$u$. We claim first that~$s_2$ contains no 
			chains. Suppose for a contradiction that it did contain some 
			chain~$(a,k)$. Note first that~$a_1$ and~$a_k$ are both shortest 
			distance $6$ from~$P_1$; this implies that~$P'_1$ is adjacent to 
			both~$a_1$ and~$a_k$ in~$\bar{B'}$, implying that~$a_1$ and~$a_k$ 
			are in different chains in~$\bar{B'}$. But this is not possible, so 
			we require~$k=1$. Note also that since~$\bar{B}$ contains at most 
			two chains,~$u$ must be adjacent to a pole; this implies 
			that~$d^N(P_1,u) = 5$ and~$d^N(a_1,u) = 3$. However in~$B'$, we 
			have~$d^{N'}(P'_1,u') \ge 6$ and~$d_{N'}(a_1,u') = 3$, which 
			contradicts Claim~\ref{cla:DistToCE} as
			\[d^N(P_1,u)-d^N(a_1,u) = 5-3 = 2\]
			whereas
			\[d^{N'}(P'_1,u')-d^{N'}(a_1,u') \ge 6-3 = 3.\] 
			So the main path~$s_2$ contains no chains; this leaves only~$s_1$ 
			to contain chains. 
			
			The main path~$s_1$ may contain two chains,~$(b,\ell)$ and~$(c,m)$, 
			such that~$\ell,m\ge 0$ and~$d^N(b_1,u) = d^N(c_1,u) = 2$, 
			whenever~$\ell>0$ and~$m>0$, respectively. We 
			require~$\ell+m\ge3$, as otherwise~$\bar{B}$ with the edge incident 
			to~$u$ is an alt-path structure that can be obtained from a binary 
			tree on two leaves. We fall into two subcases depending on the 
			value of~$\ell$.
			
			\begin{enumerate}
				\item \textbf{$\bm{\ell=0}$:} Then~$m\ge3$, and so
				\[d^N(P_1,c_1) = 7,\]
				whereas
				\[d^{N'}(P'_1,c_1) = 8,\]
				which contradicts the fact that~$N$ and~$N'$ realize the same 
				shortest distance matrix.
				\item \textbf{$\bm{\ell\ne0}$:} By symmetry, we may 
				assume~$m\ne0$. Now,
				\[d^N(b_1,c_1) = 4,\]
				whereas
				\[d^{N'}(b_1,c_1) = 5,\]
				since~$\ell+m\ge3$. This contradicts the fact that~$N$ and~$N'$ 
				realize the same shortest distance matrix.
			\end{enumerate}

		\end{enumerate}
		\item \textbf{$\bm{\bar{B}}$ contains one level-$\bm{2}$ blob:}
		If~$\bar{B}$ 
		did not contain a pendant level-$1$ blob, then we are done by applying 
		the arguments from the previous case to~$\bar{B'}$. So suppose 
		that~$\bar{B}$ contains a pendant level-$1$ blob~$P_1$ and a pendant 
		level-$2$ blob~$P_2$. We first show that~$P_1$ and~$P_2$ cannot be 
		adjacent in~$\bar{B}$. Suppose that they were adjacent. Then the vertex 
		on~$\bar{B}$ that is shortest distance-$2$ from~$p_1$ must be a pole 
		of~$\bar{B}$ or~$u$. Indeed, it cannot be a neighbor of some leaf~$l$; 
		this would mean that~$d^{N}(P_1,l) = 6$, implying that~$P'_1$ must 
		be adjacent to~$l$. But this is not possible by 
		Claim~\ref{cla:L1LeafBad}. In particular, it cannot be a connection 
		since~$\bar{B}$ contains only the pendant blobs~$P_1$ and~$P_2$. 
		
		Now, if~$u$ was adjacent to~$p_2$, 
		then this would mean that the two main paths of~$\bar{B}$ would be 
		empty, resulting in parallel edges in~$\bar{B}$. So~$u$ must either be 
		adjacent to~$p_1$ or~$u$ must be contained in one of the two other main 
		sides of~$\bar{B}$. Either way, we have				
		\[d^N(P_1,u) - d^N(P_2,u) \le -1.\]
		In~$\bar{B'}$, we have~$d^{N'}(P'_1,p'_1) = d^{N'}(P'_2,p'_1) = 4$.
		So
		\[d^{N'}(P'_1,u') - d^{N'}(P'_2,u') \ge 0,\]
		which contradicts Claim~\ref{cla:DistToCE}.
		So we may assume that~$P_1$ and~$P_2$ are not adjacent in~$\bar{B}$. We 
		split into cases depending on the position of~$u$ in~$\bar{B}$.
		\begin{enumerate}
			\item \textbf{$\bm{u}$ is on the same main path as~$\bm{p_1}$:} 
			Then~$u$ must be adjacent to~$p_1$, since~$P_1$ cannot be adjacent 
			to a chain by Claim~\ref{cla:4Statements}, and since~$P_1$ is not 
			adjacent to the only pendant level-$2$ blob in~$\bar{B}$. 
			Then~$d^N(P_1,u) = 4$ and~$d^N(P_2,u)\ge 5$. We split into 
			subcases depending on the position of~$u'$ in~$\bar{B'}$.
			\begin{enumerate}
				\item \textbf{$\bm{u'}$ is on the same main path 
				as~$\bm{p'_2}$:} Then~$u'$ must be adjacent to~$p'_2$. So we 
				have~$d^{N'}(P'_2,u')=4$ and~$d^{N'}(P'_1, u')\ge 5$, which 
				contradicts Claim~\ref{cla:DistToCE}.
				\item \textbf{$\bm{u'}$ is not on the same main path 
				as~$\bm{p'_2}$:} If~$u'$ is adjacent to an end-spine vertex of 
				some chain, then there exists a leaf~$l$ such 
				that~$d^{N'}(l,u') = 2$. We also have~$d^{N'}(P'_1,u') 
				\ge 5$. But in~$\bar{B}$, we have~$d^N(l,u) \ge 2$. This 
				contradicts Claim~\ref{cla:DistToCE}, as
				\[d^N(P_1,u) - d^N(l,u) \le 4-2 = 2,\]
				whereas
				\[d^{N'}(P'_1,u') - d^{N'}(l, u') \ge 5-2 = 3.\]
				So~$u'$ cannot be adjacent to an end-spine vertex of some 
				chain, which means that~$d^{N'}(P'_2,u') = 5$. But~$d^N(P_2,u) 
				\ge 5$, and so
				\[d^N(P_1,u) - d^N(P_2,u) \le 4-5 = -1,\]
				whereas
				\[d^{N'}(P'_1,u') - d^{N'}(P'_2, u')\ge 0,\]
				which contradicts Claim~\ref{cla:DistToCE}.
			\end{enumerate}
			\item \textbf{$\bm{u}$ is not on the same main path as~$\bm{p_1}$:} 
			We may assume that~$u'$ is not on the same main path as~$p'_2$ by 
			symmetry (apply the previous case to~$\bar{B'}$).
			\begin{enumerate}
				\item \textbf{$\bm{u}$ and~$\bm{p_2}$ are not on the same main 
				side:} 
				We let~$s_u,s_1,s_2$ denote the three main paths of~$\bar{B}$ 
				such that~$s_u$ contains~$u$ and~$s_i$ contains~$p_i$ 
				for~$i=1,2$. 
				
				Note that~$s_u$ may contain up to two chains: 
				denote these chains as~$(a,k)$ and~$(b,\ell)$ where~$d^N(a_1,u) 
				= d^N(b_1,u) = 2$, if~$k> 0$ and~$\ell>0$, respectively. 
				If~$k>0$ and~$\ell>0$, then~$d^N(P_2,a)\ge 7$ 
				and~$d^N(P_2,b)\ge 7$.
				Since~$d^N(P_1,a_k) = d^N(P_1,b_\ell) = 6$, the pendant 
				blob~$P'_1$ must be adjacent to the two chains~$a$ and~$b$ 
				in~$\bar{B'}$. Depending on the placement of~$u'$, at least 
				one, and at most two of the chain endpoints~$a_1,b_1$ are 
				shortest distance~$6$ away from~$P'_2$. But this contradicts 
				that~$N$ and~$N'$ realize the same 
				shortest distance matrix. Therefore,~$s_u$ contains at most one 
				chain; this means that~$d^N(P_1,u) = 5$. We also 
				have~$d^N(P_2,u) \ge 6$. 
				
				The same argument can be used in the case when~$u'$ and~$p'_1$ 
				are not on the same main path of~$\bar{B'}$. In that case, the 
				main path of~$\bar{B'}$ containing~$u'$ contains at most one 
				chain. We now split into subcases depending on the position 
				of~$u'$.
				\begin{enumerate}
					\item \textbf{$\bm{u'}$ and~$\bm{p'_1}$ are not on the same 
					main path of~$\bm{\bar{B'}}$:} Then, the main path 
					of~$\bar{B'}$ that contains~$u'$ contains at most one 
					chain. In particular, this means that~$u'$ must be adjacent 
					to a pole in~$\bar{B'}$. So~$d^{N'}(P'_2,u') = 5$. 
					Furthermore,~$d^{N'}(P'_1,u') 
					\ge 6$. But this contradicts Claim~\ref{cla:DistToCE}.
					\item \textbf{$\bm{u'}$ and~$\bm{p'_1}$ are on the same 
					main path of~$\bm{\bar{B'}}$:} Consider the generator 
					side~$s'$ of~$B'$ that contains $u'$ and a pole 
					of~$\bar{B'}$ as its boundary vertices. We claim that~$s'$ 
					is empty. If not, then~$s'$ contains a chain~$(a,k)$ such 
					that~$d^{N'}(a_1,u') = 2$. But then~$d^{N'}(P'_2, a_k) = 
					6$, meaning that~$a_k$ must be adjacent to~$P_2$ 
					in~$B$. This further implies that~$d^N(P_1,a_1) = 6$, 
					which leads to a contradiction as~$a_1$ is clearly not 
					adjacent to~$P'_1$ in~$B'$ (we have~$d^{N'}(P'_1,a_1)\ge 
					7$). Thus~$s'$ must be empty. But then
					\[d^N(P_1,u) - d^N(P_2,u) \le 5-6 = -1,\]
					whereas
					\[d^{N'}(P'_1,u') - d^{N'}(P'_2,u') \ge 5-5 = 0,\]
					which contradicts Claim~\ref{cla:DistToCE}.
				\end{enumerate}
				\item \textbf{$\bm{u}$ and~$\bm{p_2}$ are on the same main path 
				of~$\bar{B}$:} We may assume also that~$u'$ and~$p'_1$ are on 
				the same main path of~$\bar{B'}$. Consider the main path~$s$ 
				of~$\bar{B}$ that does not contain~$p_1$ nor~$p_2$. We claim 
				that~$s$ is empty. Suppose not, and suppose that~$s$ contains a 
				chain~$(a,k)$. In~$\bar{B'}$, we require~$P'_1$ to be adjacent 
				to both~$a_1$ and to~$a_k$. For this to be possible, 
				since~$(a,k)$ must also be a chain in~$\bar{B'}$, we 
				require~$k=1$. 
				Note that in~$\bar{B'}$, the leaf~$a_1$ is contained in the 
				side with boundary 
				vertices~$p'_1$ and~$u'$. If it had been contained in the 
				other side of~$\bar{B'}$ with~$p'_1$ as its boundary vertex, 
				then~$P_2$ must be adjacent to~$a_1$ in~$B$, which is 
				clearly not the case. Observe that a shortest path from~$P_1$ 
				to~$u$ contains the same pole contained in a shortest path 
				from~$a_1$ to~$u$. Noting that the shortest distance from~$P_1$ 
				to this pole, and the shortest distance from~$a_1$ to this pole 
				are~$4$ and~$2$, respectively, it follows that
				\[d^N(P_1,u) - d^N(a_1,u) = 4 - 2 = 2,\]
				whereas
				\[d^{N'}(P'_1,u') - d^{N'}(a_1,u') = 6-2 = 4,\]
				which contradicts Claim~\ref{cla:DistToCE}.
				So~$s$ is empty, and we may assume that the main path 
				of~$\bar{B'}$ that does not have~$p'_1$ nor~$p'_2$ is empty.
				
				If all three other sides of~$B$ are empty, then~$B$ 
				contains an alt-path structure formed by a binary tree on 
				two leaves, and we get a contradiction on the choice 
				of~$N$. In particular, the three sides must contain at 
				least two leaves. Let~$s_1, s_2, s_3$ denote the sides 
				of~$B$ that has~$p_2$ but not~$u$,~$p_2$ and~$u$, and~$u$ 
				but not~$p_2$ as its boundary vertices, respectively. 
				Similarly let~$s'_1,s'_2, s'_3$ denote the sides of~$B'$ 
				that has~$p'_1$ but not~$u'$,~$p'_1$ and~$u'$, and~$u$ but 
				not~$p'_1$ as its boundary vertices, respectively. It is 
				easy to see that a chain contained in~$s_1$ must be 
				contained in~$s'_1$; a chain contained in~$s_2$ must be 
				contained in~$s'_3$; a chain contained in~$s_3$ must be 
				contained in~$s'_2$.
				\begin{enumerate}
					\item \textbf{the side~$\bm{s_1}$ is non-empty:} 
					Let~$(a,k)$ denote the chain contained in~$s_1$, such 
					that~$d^N(a_1,P_2) = d^N(a_k,P_1) = 6$. We 
					claim that~$s_2$ and~$s_3$ must both be empty. If~$s_2$ 
					is non-empty, then it contains a chain~$(b,\ell)$, such 
					that~$d^N(b_1,P_2) = 6$. So the chains~$(a,k)$ 
					and~$(b,\ell)$ are adjacent. In~$\bar{B'}$, the 
					chain~$(b,\ell)$ is contained in the side~$s'_3$. But 
					then~$(a,k)$ and~$(b,\ell)$ cannot be adjacent in~$N'$, 
					which contradicts the fact that~$N$ and~$N'$ satisfy 
					the same shortest distance matrix. So~$s_2$ must be 
					empty. By applying the same argument to~$B'$, we see 
					that~$s'_2$ must also be empty; therefore, the 
					side~$s_3$ must be empty. So~$s_2$ and~$s_3$ must both be 
					empty.
					
					We require~$k\ge2$, as otherwise~$B$ contains an alt-path 
					structure obtained from a tree on two leaves. But then
					\[d^N(a_1,u) - d^N(a_k,u) = 3 - 4 = -1,\]
					whereas
					\[d^{N'}(a_1,u') - d^{N'}(a_k,u') = 4 - 3 = 1,\]
					since~$a_1$ is adjacent to~$P_2$ in~$\bar{B}$ and~$a_k$ is 
					adjacent to~$P'_1$ in~$\bar{B'}$. This contradicts 
					Claim~\ref{cla:DistToCE}.
					\item \textbf{the side~$\bm{s_1}$ is empty:} 
					Let~$(b,\ell)$ and~$(c,m)$ denote the chains contained 
					in~$s_2$ and~$s_3$, respectively, such 
					that~$d^N(P_2,b_1) = 6$ and~$d^N(b_\ell,c_m) = 4$, 
					whenever~$\ell>0$ and~$m>0$, respectively. 
					Note that~$\ell+ m \ge 2$, as otherwise~$\bar{B}$ together 
					with the edge incident to~$u$ is an alt-path structure 
					formed by a binary tree on two leaves. In particular, at 
					least one of~$\ell$ or~$m$ must be non-zero. By symmetry, 
					we may assume without loss of generality that~$m>0$. Then 
					in~$\bar{B'}$, the blob~$P'_1$ is adjacent to~$c_1$. 
					This means that
					\[d^{N'}(P'_2,c_1) = 7.\]
					In~$\bar{B}$, there are three paths from~$c_1$ to~$P_2$. 
					One 
					uses the empty main path and is of length~$8$. The second 
					uses 
					the main path with~$p_1$ and is of 
					length~$9$. These two paths cannot be altered by 
					deleting leaves. The third path contains the spine of 
					the chain~$(b,\ell)$, and is of length~$\ell+m+6$. 
					Since~$m\ge1$, we only obtain~$d^N(P_2,c_1) = 7$ if and 
					only if~$m=1$ and~$\ell=0$. But this is not possible, 
					since we require~$\ell+m \ge 2$. So the networks~$N$ 
					and~$N'$ cannot satisfy the same shortest distance matrix, 
					which is a contradiction.
				\end{enumerate}
			\end{enumerate}

		\end{enumerate}
	\end{enumerate}
	Therefore we reach a contradiction for the case when~$B$ and~$B'$ are 
	both level-$2$ blobs.

\end{proof}

%

The following corollary follows immediately from Corollary~\ref{lem:CEShortest} and 
Theorem~\ref{thm:NoAltPath}.

\begin{corollary}
	A level-$2$ network is reconstructible if and only if it does not contain 
	an alt-path structure.
\end{corollary}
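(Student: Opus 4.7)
The plan is to obtain the corollary as an immediate consequence of the two directional results already proved in the excerpt, so the proof is essentially a one-line combination rather than an independent argument. The statement is a biconditional whose two implications correspond exactly to Corollary~\ref{lem:CEShortest} and Theorem~\ref{thm:NoAltPath}, and I would not re-prove either here.

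For the forward direction, I would argue by contrapositive: suppose a level-$2$ network $N$ contains an alt-path structure $G$ of some binary tree $T$. Then by Corollary~\ref{lem:CEShortest}, replacing $G$ in $N$ by its similar alt-path structure yields a distinct network $N'$ realizing the same shortest distance matrix as $N$, so $N$ fails to be reconstructible from its shortest distance matrix. Hence reconstructibility of $N$ forces the absence of any alt-path substructure.

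For the reverse direction, I would simply invoke Theorem~\ref{thm:NoAltPath}: any level-$2$ network containing no alt-path structure is reconstructible from its shortest distances. Combining the two directions completes the characterization.

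There is no genuine obstacle here, because the hard work has been done in Section~\ref{sec:Forbidden}; the main non-trivial step was proving Theorem~\ref{thm:NoAltPath}, which involved the long case analysis on the structure of a minimum counter-example via degree-$3$ vertices, pendant level-$1$ blobs, and level-$2$ blobs. The corollary is purely a packaging of those two results into a single if-and-only-if statement, and I would present it as such in a short paragraph rather than attempt any additional argument.
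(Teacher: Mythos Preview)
Your proposal is correct and matches the paper's approach exactly: the paper also states that the corollary ``follows immediately from Corollary~\ref{lem:CEShortest} and Theorem~\ref{thm:NoAltPath},'' combining the two directions just as you do.
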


\section{Discussion}\label{sec:Discussion}

The results of this paper build on, and answer three open problems 
presented in the paper by van Iersel et al.~\citep{van2020reconstructibility}. 
We have shown that networks with a leaf on each generator side are 
reconstructible from their shortest distance matrix 
(Theorem~\ref{thm:GenSide}).
We have shown that level-$2$ networks are reconstructible from their 
sl-distance matrix (Theorem~\ref{thm:L2SL}). We have 
characterized the family of subgraphs that prevent level-$2$ networks from 
being 
reconstructible from their shortest distances (Theorem~\ref{thm:NoAltPath}).

Previously, it was only known that level-$2$ networks were reconstructible from 
their multisets of distances, the full collection of lengths of all inter-taxa 
distances together with their multiplicities. An algorithm based on this result 
was recently presented and implemented in the Bachelor Thesis of Riche 
Mol~\citep{Mol2020}, where 
a major bottleneck originated from having to adjust the large 
multisets of distances upon identifying and reducing a particular pendant 
structure. As a result, the theoretical running time of the algorithm \rev{is}
exponential in the number of leaves in the network (though it is polynomial in 
the size of the input, the multisets of distances). The results presented in 
this paper point to a possibility of an alternative algorithm for constructing 
level-$2$ 
networks from their sl-distance matrix; since updating the sl-distance matrix 
can be done much quicker than for multisets of distances, we wonder if this 
could culminate in a polynomial time algorithm with respect to the number of 
leaves in the network. It would be of great interest to see the speed-up both 
theoretically and in practice.

In this paper, we have excluded all blobs incident to exactly two cut-edges. 
One of the consequences of excluding such blobs is that we never obtain pendant 
level-$2$ blobs of the form~$(1,0,0,0)$ in our networks. Conditions for 
identifying and reducing such pendant blobs from level-$2$ networks are 
outlined in Lemmas~5.9 and~5.10 of~\citep{van2020reconstructibility}. In fact, 
such pendant blobs can be inferred from only the shortest distance 
matrix. This means that Theorem~\ref{thm:L2SL}, which says that level-$2$ 
networks are reconstructible from their sl-distance matrix, holds in general 
when this restriction is not imposed. On the other hand, allowing for such 
blobs introduces a new level of complexity within alt-path 
structures. Call a level-$2$ blob with two cut-edges a \emph{macaron}, and 
consider an alt-path structure~$G$ obtained from some tree~$T$, and 
replace every cut-edge in~$G$ by a path of arbitrary many macarons. Call this 
graph~$G'$. Let~$H$ denote a similar alt-path structure to~$G$, and let us
replace the same cut-edges by paths consisting of the same number of macarons  
(where by the same cut-edge, 
we mean the cut-edge that induces the same split). Call this resulting 
graph~$H'$. It is easy to see that~$G'$ and~$H'$ realize the same shortest 
distance matrix. The converse is not immediately obvious. In other words, it is 
not clear 
whether excluding these `macaron-added' alt-path structures from level-$2$ 
networks guarantee reconstructibility from their shortest distances. 
Nevertheless, we make the following conjecture.

\begin{conjecture}
	A level-$2$ network is reconstructible from its shortest distance matrix if 
	and only if after suppressing macarons and degree-$2$ vertices, it does not 
	contain an alt-path structure.
\end{conjecture}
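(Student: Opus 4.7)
The plan is to derive this corollary by combining the two complementary reconstructibility results already established in the section, one for each direction of the biconditional.

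For the ``only if'' direction, I would argue by contrapositive. Suppose a level-$2$ network $N$ contains an alt-path structure $G$ of some binary tree $T$. Then by Corollary~\ref{lem:CEShortest}, $N$ is not reconstructible from its shortest distance matrix, since replacing $G$ by its similar alt-path structure (which is distinct from $G$ as a subgraph of $N$) yields a non-isomorphic network realizing the same shortest distance matrix. The key underlying fact here is Lemma~\ref{lem:SimilarAltPath}, which ensures that the swap of similar alt-path structures does not alter any inter-leaf shortest distance.

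For the ``if'' direction, I would invoke Theorem~\ref{thm:NoAltPath} directly, which asserts precisely that a level-$2$ network containing no alt-path structure is reconstructible from its shortest distances. This is the technically demanding half of the characterization; it was handled via induction on the size of the network, using the splits provided by Theorem~\ref{thm:CutEdgeSplit}, the pendant-blob identification from Lemma~\ref{lem:IdentifyBlobsFromSplits}, and extensive case analysis on what happens at a ``leaf'' of the blob-tree.

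Since both directions follow immediately from results already proved, no further obstacle remains; the corollary is simply the packaging of Lemma~\ref{lem:SimilarAltPath}, Corollary~\ref{lem:CEShortest}, and Theorem~\ref{thm:NoAltPath} into a clean characterization of shortest-distance reconstructibility for level-$2$ networks.
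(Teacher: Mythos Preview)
You have misidentified the statement. What you prove is the unnamed corollary at the end of Section~\ref{sec:Forbidden} (``A level-$2$ network is reconstructible if and only if it does not contain an alt-path structure''), and for that statement your argument is exactly the paper's: combine Corollary~\ref{lem:CEShortest} with Theorem~\ref{thm:NoAltPath}. But the statement you were asked about is the \emph{conjecture} in Section~\ref{sec:Discussion}, which concerns a strictly larger class of networks---those that may contain \emph{macarons} (level-$2$ blobs incident to exactly two cut-edges). The paper's standing assumption that every cut-edge induces a unique split explicitly rules out macarons, so all of Corollary~\ref{lem:CEShortest}, Theorem~\ref{thm:NoAltPath}, Theorem~\ref{thm:CutEdgeSplit}, and Lemma~\ref{lem:IdentifyBlobsFromSplits} are proved only for macaron-free networks. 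None of them can be invoked as-is once macarons are present.

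The paper itself says the forward direction (networks containing macaron-padded alt-path structures are not reconstructible) is easy to see, but that ``the converse is not immediately obvious''---this is precisely why it is stated as a conjecture rather than a theorem. Your proposal does not address the genuine obstacle: after suppressing macarons, the resulting object need not be the network whose shortest distance matrix you were given, so reconstructibility of the suppressed network (even if you could obtain it from Theorem~\ref{thm:NoAltPath}) does not transfer back. You would need to show that the presence of macarons cannot create new ambiguities beyond the alt-path ones, and the paper's machinery does not supply this.
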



A potential shortcoming of our findings lies in the fact that the networks we 
consider are unweighted. In phylogenetic analysis, weighted 
edges are often used to indicate the extent on how two species may differ from 
one another - to depict the passage of time, or to indicate the 
amount of genetic divergence between two species. The major issue that 
arises from weighted edges is that the 
foundational structures such as cherries and chains can no longer be 
characterized by their distances. In the rooted weighted variant of the 
problem, this is overcome by simulating a `relative root' by imposing 
ultrametric conditions and through the use of 
outgroups~\citep{bordewich2018constructing}. This 
makes it possible 
to locate cherries and the rooted analogue of chains (\emph{reticulated 
cherries}), even when the network is weighted. While these techniques do not 
translate over to the unrooted setting, some additional 
conditions will almost certainly be required to obtain results for the weighted 
variant of the problem.

\bibliographystyle{apa}

\begin{appendices}
	\section{Proof of Claims from Section~\ref{subsec:L2}:}
	
	To prove these claims, we will use the following observation.
	
	\begin{observation}\label{obs:689}
		Let~$P_1$ be a pendant level-$1$ blob contained in~$\bar{B}$. Suppose 
		that~$p'_2p'_1p'_3p'_4$ is a path in~$\bar{B'}$, such that each~$p'_i$ 
		is a connection for a pendant blob~$P'_i$ for~$i\in [4]$. Suppose also 
		that~$l(P'_1) = l(P'_3) = 1$ and~$l(P'_2) = l(P'_4) = 2$. A vertex~$p$ 
		on the blob~$\bar{B}$ such that~$d^N(p_1,p) = 2$ must either be~$u$ or 
		a pole of~$\bar{B}$.
	\end{observation}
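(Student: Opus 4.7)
The plan is a case analysis on what $p$ could be. Every vertex of the level-$2$ blob $\bar{B}$ is one of four types: a pole, the cut-edge endpoint $u$, the connection of some pendant blob of $\bar{B}$, or a spine vertex adjacent to a leaf of $N$. It thus suffices, assuming $d^N(p_1, p) = 2$ together with the hypothesis, to derive a contradiction when $p$ is the connection of a pendant blob $Q \neq P_1$ or when $p$ is adjacent to a leaf of $\bar{B}$; the conclusion then forces $p$ to be $u$ or a pole. For a pendant blob $X$ of $\bar{B}$, I write $X^{*}$ for the corresponding pendant blob of $\bar{B'}$ on the same leaves, and vice versa; by the minimality of the counter-examples, $X$ and $X^{*}$ have opposite levels.

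Suppose first that $p$ is the connection of a pendant blob $Q \neq P_1$ in $\bar{B}$. Bad pendant level-$1$ blobs have leaves at distance $3$ from their connection, and bad pendant level-$2$ blobs have leaves at distance $4$. If $l(Q) = 1$, then $d^N(P_1, Q) = 3 + 2 + 3 = 8$; however $P_1^{*}$ and $Q^{*}$ are distinct pendant level-$2$ blobs in $\bar{B'}$ with distinct connections, so $d^{N'}(P_1^{*}, Q^{*}) \ge 4 + 1 + 4 = 9$, contradicting $d^N = d^{N'}$. If $l(Q) = 2$, then $d^N(P_1, Q) = 3 + 2 + 4 = 9$, and I invoke the hypothesis: the path $p'_2p'_1p'_3p'_4$ contains distinct pendant level-$1$ blobs $P'_1$ and $P'_3$ with adjacent connections, giving $d^{N'}(P'_1, P'_3) = 3 + 1 + 3 = 7$, whereas the corresponding pendant level-$2$ blobs $P'_1{}^{*}, P'_3{}^{*}$ in $\bar{B}$ have distinct connections and so satisfy $d^N(P'_1{}^{*}, P'_3{}^{*}) \ge 4 + 1 + 4 = 9 > 7$, again contradicting $d^N = d^{N'}$.

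Now suppose $p$ is adjacent to a leaf $l$ of $\bar{B}$. Then $d^N(P_1, l) = 3 + 2 + 1 = 6$, and the argument in the proof of Claim~\ref{cla:4Statements}(v) (with $l(P_1^{*}) = 2$) forces $l$ to be adjacent to $P_1^{*}$ in $\bar{B'}$. Combined with the hypothesis, the same distance computation on the pair $P'_1, P'_3$ used above again delivers a contradiction with $d^N = d^{N'}$. In both displayed cases the supposition that $p$ is neither $u$ nor a pole leads to a contradiction, and hence $p$ must be $u$ or a pole of $\bar{B}$.

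The main obstacle is presenting the case analysis cleanly and carefully tracking the levels of the pendant blobs as they flip between $\bar{B}$ and $\bar{B'}$. The distance computations themselves are elementary, but invoking the hypothesized path $p'_2p'_1p'_3p'_4$ at the right moment, together with the standard fact that distinct pendant blobs have distinct connections (so their connection distance within the blob is at least $1$), is what makes the arguments go through. With the two cases dispatched, the classification of vertices on $\bar{B}$ forces the stated conclusion.
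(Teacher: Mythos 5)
Your first case ($p$ the connection of a pendant level-$1$ blob $Q$) matches the paper's argument and is fine. The problem is with the other two cases: in both, you derive your contradiction from the \emph{hypothesis alone} (two adjacent level-$1$ pendant blobs $P'_1,P'_3$ in $\bar{B'}$ give $d^{N'}(P'_1,P'_3)=7$ while their level-$2$ counterparts in $\bar{B}$ are at distance at least $9$), without ever using the assumption on $p$. If that computation were the intended reading, the hypothesis would simply be unsatisfiable (it already violates Claim~\ref{cla:4Statements}$(ii)$) and the whole observation would be vacuous --- a one-line proof, not a case analysis. What has actually happened is that the printed level assignments are a typo: the paper's own proof explicitly writes ``Since $l(P'_1)=l(P'_5)=2$'', treats $P'_1$ as the level-$2$ blob of $\bar{B'}$ corresponding to $P_1$, and both invocations of the observation (in the proof of Claim~\ref{cla:L2L2Bad} and of Claim~\ref{cla:1L1Blob}) apply it to a path whose two middle connections carry level-$2$ pendant blobs and whose two outer connections carry level-$1$ pendant blobs. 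The intended hypothesis is $l(P'_1)=l(P'_3)=2$ and $l(P'_2)=l(P'_4)=1$. Under that reading your key step collapses: $d^{N'}(P'_1,P'_3)=4+1+4=9$, which is perfectly compatible with the corresponding level-$1$ blobs in $\bar{B}$ being at distance $9$, so no contradiction is available from the path itself.

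The arguments you are missing are the ones that actually use $p$. For $p$ a neighbour of a leaf $l$: $d^N(P_1,l)=6$ forces $l$ to be adjacent to $P'_1$ in $\bar{B'}$ (since $l(P'_1)=2$), which is impossible because $p'_1$ has degree $3$ and its two neighbours on the blob are already $p'_2$ and $p'_3$. For $p$ the connection of a level-$2$ pendant blob $P_5$: $d^N(P_1,P_5)=9$ forces $d^{N'}(p'_1,p'_5)=2$, so $p'_5$ is adjacent to $p'_2$ or lies beyond $p'_3$, i.e.\ the level-$1$ blob $P'_5$ becomes adjacent to the level-$1$ blob $P'_2$ or $P'_4$, contradicting Claim~\ref{cla:4Statements}. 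Neither of these steps appears in your write-up, so as it stands the proposal does not establish the observation in the form in which it is actually used.
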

	\begin{proof}
		Suppose for a contradiction that~$p$ is either a neighbor of a leaf or 
		a connection of some pendant blob. Suppose first that~$p$ is a 
		neighbor of a leaf~$l$. Then~$d^N(P_1,l) = 6$. Since~$N$ and~$N'$ 
		have the same shortest distance matrix, this means that~$P'_1$ must 
		be adjacent to~$l$ in~$\bar{B'}$. But this is not possible as~$P'_1$ is 
		already adjacent to~$P'_2$ and~$P'_3$. So suppose that~$p$ is a 
		connection of a pendant blob~$P_5$. Suppose first that~$l(P_5) = 1$. 
		Then~$d^N(P_1,P_5) = 8$. Since~$l(P'_1) = l(P'_5) = 2$, such a distance 
		can be realized in~$N'$ if and only if~$p'_1=p'_5$. But this is 
		impossible as~$B'$ is a level-$2$ blob. Finally suppose that~$l(P_5) = 
		2$. Then since~$d^N(P_1,P_5) = 9$, the corresponding blob~$P'_5$ must 
		be adjacent to~$P'_2$ or to~$P'_4$, which is not possible as pendant 
		level-$1$ blobs cannot be adjacent to one another by 
		Claim~\ref{cla:4Statements}.
	\end{proof}

	\setcounter{claim}{2}

	\begin{claim}
		Two pendant level-$2$ blobs cannot be adjacent to one another 
		in~$\bar{B}$ (and in~$\bar{B'}$).
	\end{claim}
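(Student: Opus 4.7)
The plan is to argue by contradiction, assuming that $P_1, P_2$ are two adjacent pendant level-$2$ blobs in $\bar{B}$ (the case for $\bar{B'}$ will follow by the symmetric argument). Since we are in the setting of the main proof, both $P_1$ and $P_2$ are bad blobs of form $(a,0,0,0)$ with $|a|=2$, so each contains exactly two leaves. I would first verify, by tracing the three internally disjoint pole-to-pole paths inside such a blob, that the shortest distance from any leaf of such a pendant blob to its connection equals $4$. Because $p_1 p_2$ is an edge of $\bar{B}$, this yields $d^N(P_1,P_2) = 4 + 1 + 4 = 9$. The corresponding bad pendant blobs $P'_1, P'_2$ in $N'$ are pendant level-$1$ blobs with two leaves each, for which the leaf-to-connection distance is $3$; matching the distance matrices of $N$ and $N'$ then forces $d^{N'}(p'_1, p'_2) = 3$.

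Next I would examine a shortest $p'_1$-to-$p'_2$ path of length three in $\bar{B'}$, written $p'_1 - x - y - p'_2$. By Claim~\ref{cla:4Statements}(i)-(ii), the neighbor $x$ of $p'_1$ in $\bar{B'}$ cannot be an end-spine vertex of a chain (else $P'_1$ would be adjacent to a chain) and cannot be the connection of a pendant level-$1$ blob (else two pendant level-$1$ blobs would be adjacent); the same restriction applies to $y$ as the neighbor of $p'_2$. Thus each of $x, y$ must be either $u'$, a pole of $\bar{B'}$, or the connection of a pendant level-$2$ blob. In parallel, applying Claim~\ref{cla:DistToCE} to the leaves of $P_1$ and $P_2$ gives
\[
d^N(p_1, u) - d^N(p_2, u) \;=\; d^{N'}(p'_1, u') - d^{N'}(p'_2, u'),
\]
and since $p_1 p_2$ is an edge the left-hand side has absolute value at most $1$, so $u'$ must sit at nearly equal distance from $p'_1$ and $p'_2$ in $\bar{B'}$.

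The concluding step is a case analysis over the finitely many possibilities for the unordered pair $\{x, y\}$. In each sub-case I would use the constraint on $u'$ above, together with distances from additional leaves (either leaves of pendant blobs hanging off $x$ or $y$, or leaves reached via the cut-edge $u'v$), to show that some pair of leaves must have disagreeing shortest distances in $N$ and $N'$, contradicting the assumption that the two networks realize the same shortest distance matrix. The hard part will be the sub-case in which both $x$ and $y$ are connections of further pendant level-$2$ blobs $P'_3, P'_4$: this introduces additional bad blobs adjacent to $P'_1$ and $P'_2$ on either side, and I expect to resolve it by combining Claim~\ref{cla:4Statements}(iv) (which caps the number of pendant level-$2$ blobs adjacent to a pendant level-$1$ blob at one) with distance comparisons between the leaves of $P_3, P_4$ and the leaves of $P_1, P_2$ traced through both $N$ and $N'$.
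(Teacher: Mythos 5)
Your setup is correct and follows the same route as the paper: you compute $d^N(P_1,P_2)=4+1+4=9$, deduce that $d^{N'}(p'_1,p'_2)=3$, and use Claim~\ref{cla:4Statements}~$(i)$--$(ii)$ to restrict the two interior vertices of the length-$3$ path in $\bar{B'}$ to poles, $u'$, or connections of pendant level-$2$ blobs; the constraint $|d^{N'}(p'_1,u')-d^{N'}(p'_2,u')|\le 1$ obtained from Claim~\ref{cla:DistToCE} is also valid. This is exactly the skeleton of the paper's argument (the paper organizes the cases by whether $p'_1,p'_2$ lie on the same main path of $B'$ and whether $u'$ lies on the connecting path $Q'$, rather than by the unordered pair $\{x,y\}$, but the two decompositions cover the same configurations).

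The gap is in the concluding case analysis, which is where essentially all the work of this claim lives, and your sketch of the hardest case would fail as stated. When $x$ and $y$ are connections of pendant level-$2$ blobs $P'_3,P'_4$, every pairwise shortest distance among the eight leaves of $P_1,P_2,P_3,P_4$ agrees between $N$ and $N'$: for instance $d^{N'}(P'_1,P'_3)=3+1+4=8=4+1+3=d^N(P_1,P_3)$, $d^{N'}(P'_1,P'_4)=3+2+4=9=4+2+3=d^N(P_1,P_4)$, and $d^{N'}(P'_3,P'_4)=4+1+4=9=3+3+3=d^N(P_3,P_4)$ (the configuration in $\bar{B}$ being $p_3\,p_1\,p_2\,p_4$). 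So ``distance comparisons between the leaves of $P_3,P_4$ and the leaves of $P_1,P_2$'' yield no contradiction, and Claim~\ref{cla:4Statements}~$(iv)$ is not violated either, since each level-$1$ blob in this configuration is adjacent to exactly one level-$2$ blob. The paper closes this case with an additional structural step (Observation~\ref{obs:689}, proved in the appendix for precisely this purpose): it shows that any blob vertex at distance $2$ from $p'_1$ or $p'_2$ off the path $Q'$ must be $u'$ or a pole, forces the remaining main paths of $B'$ to carry no leaves, pins down the positions of $u$ and $u'$, and only then obtains $d^N(P_1,u)-d^{N'}(P'_1,u')=7-5=2$ against $d^N(P_3,u)-d^{N'}(P'_3,u')=5-7=-2$, contradicting Claim~\ref{cla:DistToCE}. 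Some ingredient of this kind --- ruling out further leaves near $p'_1$ and $p'_2$ and locating $u'$ exactly --- is absent from your plan, and without it the decisive case does not close.
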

	\begin{proof}
		Suppose for a contradiction that~$\bar{B}$ contains two adjacent 
		pendant level-$2$ blobs~$P_1$ and~$P_2$ on the main path~$s$. Then~$B'$ 
		contains two corresponding pendant level-$1$ blobs~$P'_1$ and~$P'_2$ on 
		the same leaves. We split into cases depending on the locations 
		of~$p'_1$ and~$p'_2$ in~$B'$.
		\begin{enumerate}
			\item \textbf{$\bm{p'_1}$ and~$\bm{p'_2}$ are on the 
				same main path~$\bm{s'}$ of~$\bm{B'}$:} Since the shortest 
				distance 
			between~$p'_1$ and~$p'_2$ must be exactly~$3$, there must be at 
			least two vertices on the same main path in the path~$Q'$ 
			between~$p'_1$ and~$p'_2$. This path~$Q'$ may contain~$u'$ as a 
			vertex; we split into cases again.
			\begin{enumerate}
				\item \textbf{$\bm{u'}$ is a vertex of~$\bm{Q'}$:} Observe 
				first that if both~$P'_1$ and~$P'_2$ are adjacent to level-$2$ 
				blobs~$P'_3$ and~$P'_4$ respectively, then~$d^{N'}(P'_3,P'_4) 
				\ge10$. However, the counterparts of these blobs in~$\bar{B}$ 
				must be adjacent to~$P_1$ and~$P_2$. Since~$P_1$ and~$P_2$ are 
				adjacent, this implies that~$d^N(P_3,P_4) = 9$, which 
				contradicts the fact that~$N$ and~$N'$ satisfy the same 
				shortest distance matrix. Therefore only one of~$P'_1$ 
				or~$P'_2$ can be adjacent to a pendant level-$2$ blob. Note 
				that at least one of~$P'_1$ or~$P'_2$ must be adjacent to a 
				pendant level-$2$ blob, such that its connection is on the 
				path~$Q'$. So without loss of generality, suppose that~$P'_1$ 
				is adjacent to a pendant level-$2$ blob~$P'_3$, such 
				that~$p'_3$ is a vertex in~$Q'$. At this point, we have 
				that~$P_3,P_1$ and~$P_1,P_2$ are adjacent in~$\bar{B}$.
				
				We claim that~$P'_3$ cannot be adjacent to a leaf or to pendant 
				blobs other than~$P'_1$ in~$\bar{B'}$. Firstly, if~$P'_3$ was 
				adjacent to a leaf~$l$, then~$d^{N'}(P'_1,l) \in \{5,6\}$. The 
				distance~$d^{N'}(P'_1,l) = 5$ is impossible as~$l(B) = 2$; 
				if~$d^{N'}(P'_1,l) = 6$, then~$P_1$ must be adjacent to~$l$ 
				in~$\bar{B}$. But this is impossible as~$P_1$ is already 
				adjacent to~$P_2$ and~$P_3$. This is a contradiction. The 
				blob~$P'_3$ cannot be adjacent to a pendant 
				level-$1$ blob other than~$P'_1$, as this would contradict 
				Claim~\ref{cla:4Statements}. Finally, we claim that~$P'_3$ 
				cannot be adjacent to a pendant level-$2$ blob~$P'_4$. Since 
				this would mean that~$d^{N'}(P'_1,P'_4) = 9$, we must 
				in~$\bar{B}$ that either~$P_2$ or~$P_3$ is adjacent to~$P_4$. 
				The former is not possible as~$P'_2$ is not adjacent to~$P'_4$ 
				in~$B'$; the latter is not possible as two pendant level-$1$ 
				blobs cannot be adjacent by Claim~\ref{cla:4Statements}. 
				So~$P'_3$ cannot be adjacent to a leaf or to pendant blobs 
				in~$\bar{B'}$.
				
				By Claim~\ref{cla:4Statements}, the connection~$p'_1$ must be 
				adjacent to a pole of~$B'$. Since~$P'_2$ cannot be adjacent to 
				a leaf or to a pendant level-$1$ blob by 
				Claim~\ref{cla:4Statements}, and it also cannot be adjacent to 
				any pendant level-$2$ blobs by assumption,~$p'_2$ must also be 
				adjacent to the other pole of~$B'$ and to~$u'$. Since~$p'_3$ 
				must be adjacent to~$u'$, it follows that the main path~$s'$ is 
				the path~$p'_1p'_3u'p'_2$.
				
				Now~$B'$ must contain another leaf on one of the other two main 
				sides since they cannot contain parallel edges. We claim that 
				such a leaf cannot exist, thereby reaching a contradiction. 
				Let~$l$ denote such a leaf that is on one of these two main 
				sides, whose neighbor (if~$l$ is not in a pendant blob) / 
				connection (if~$l$ is in a pendant blob) is shortest 
				distance-$2$ to~$p'_1$. Suppose first that~$l$ is not in a 
				pendant blob. Then
				\[d^{N'}(P'_1,l) = 6,\]
				and so~$P_1$ must be adjacent to~$l$ in~$\bar{B}$, 
				which is not possible as~$P_1$ is already adjacent to~$P_2$ and 
				to~$P_3$. So now suppose that~$l$ is contained in a pendant 
				level-$1$ blob. Then
				\[d^{N'}(P'_1,l) = 8.\]
				But the level of the corresponding pendant blob in~$\bar{B}$ 
				is~$2$, and since~$l(P_1) = 2$, we must have that~$d^N(P'_1,l) 
				\ge 9$, which contradicts the fact that~$N$ and~$N'$ have 
				the same shortest distance matrix. Finally, if~$l$ is contained 
				in a pendant level-$2$ blob~$P'_4$, then
				\[d^{N'}(P'_1,l) = 9.\]
				This means that in~$\bar{B}$, the corresponding blob~$P_4$ must 
				be adjacent either to~$P_2$ or~$P_3$. The former is not 
				possible as~$P'_2$ is not adjacent to~$P'_4$ in~$\bar{B'}$; the 
				latter is not possible as two pendant level-$1$ blobs cannot be 
				adjacent by Claim~\ref{cla:4Statements}.
				
				\item \textbf{$\bm{u'}$ is not a vertex of~$\bm{Q'}$:} 
				Let~$p'_3$ and~$p'_4$ denote the neighbors of~$p'_1$ 
				and~$p'_2$ in this path~$Q'$, respectively. 
				Since~$l(P'_1)=l(P'_2) = 1$, the vertices~$p'_3$ and~$p'_4$ 
				must be connections of pendant level-$2$ blobs~$P'_3$ 
				and~$P'_4$, respectively. At this point, we have 
				that~$P_3,P_1$; $P_1,P_2$; $P_2,P_4$ are adjacent in~$B$. Now 
				suppose that there is another vertex~$p'_5$ that is a neighbor 
				of~$p'_3$ that is not~$p'_4$ in~$B'$. By 
				Observation~\ref{obs:689}, 
				the vertex~$p'_5$ cannot be a neighbor of a leaf nor a 
				connection of some pendant blob. By nature of~$B'$,~$p'_5$ must 
				be the vertex~$u'$, but this would contradict our assumption 
				that the path~$Q'$ does not include~$u'$. Therefore~$p'_3$ must 
				be adjacent to~$p'_4$. Thus,~$P'_1, P'_3$;~$P'_3,P'_4$; 
				and~$P'_4,P'_2$ are adjacent in~$B'$. 
				
				By 
				Claim~\ref{cla:4Statements}, since pendant level-$1$ blobs may 
				not be adjacent to leaves and they may be adjacent to at most 
				one pendant level-$2$ blob, either~$p'_1$ or~$p'_2$ must be 
				adjacent to a pole of~$B'$. Suppose without loss 
				of generality that~$p'_1$ is adjacent to a pole of~$B'$. 
				Consider the two main paths of~$B'$ that are not~$s'$. 
				Since~$N$ contains no parallel edges, one of the two main paths 
				must contain a vertex. In particular, there must exist a vertex 
				that is distance-$2$ away from~$p'_1$. By 
				Observation~\ref{obs:689}, 
				such a vertex cannot be a neighbor of a leaf nor a connection 
				of a pendant blob in~$\bar{B'}$. Then such a vertex must 
				be~$u'$. By invoking Observation~\ref{obs:689} again, for 
				both~$p'_1$ 
				and~$p'_2$, it is easy to see that these two main paths cannot 
				contain any leaves in~$\bar{B'}$. So~$\bar{B}$ and~$\bar{B'}$ 
				must contain only the eight leaves of these four pendant 
				blobs on the same main paths, with the vertices~$u$ and~$u'$ on 
				a different main path, respectively. But we find that
				\[d^N(P_1,u) - d^{N'}(P'_1,u') = 7-5 = 2,\]
				whereas
				\[d^N(P_3,u) - d^{N'}(P'_3,u') = 5-7 = -2,\]
				which contradicts Claim~\ref{cla:DistToCE}.
			\end{enumerate}

			\item \textbf{$\bm{p'_1}$ and~$\bm{p'_2}$ are incident to 
				different main paths of~$\bm{B'}$:} Let~$s'_1,s'_2$ denote the 
				main 
			sides of~$\bar{B'}$ that contains~$p'_1,p'_2$, respectively. 
			Let~$s'_3$ denote the third main path of~$\bar{B'}$. The 
			vertex~$u'$ is contained either in~$s'_1,s'_2,$ or in~$s'_3$. The 
			first two cases are equivalent by symmetry, so we split into two 
			cases.
			\begin{enumerate}
				\item \textbf{$\bm{u'}$ is in $\bm{s'_1}$:} If~$P'_1$ 
				and~$P'_2$ are both not adjacent to a pendant level-$2$ blob,  
				then~$d^{N'}(P'_1,P'_2) = 8$ and we reach a contradiction as we 
				have~$d^N(P_1,P_2) = 9$. Therefore~$P'_1$ or~$P'_2$ must be 
				adjacent to a pendant level-$2$ blob on this distance-8 
				path. If~$P'_1$ and~$P'_2$ are both adjacent to level-$2$ 
				blob~$P'_3$ and~$P'_4$, respectively, then~$d^{N'}(P'_3,P'_4) 
				\ge 10$. But since~$P_3,P_1$; $P_1,P_2$; and~$P_2,P_4$ would be 
				adjacent in~$\bar{B}$, we must have~$d^N(P_3,P_4) = 9$, which 
				contradicts the fact that~$N$ and~$N'$ satisfy the same 
				shortest distance matrix. Therefore, exactly one of~$P'_1$ 
				or~$P'_2$ must be adjacent to a pendant level-$2$ blob.
				\begin{enumerate}
					\item \textbf{$\bm{P'_1}$ is adjacent to a pendant 
						level-$\bm{2}$ blob~$\bm{P'_3}$:} Note that~$p'_1$ is 
					adjacent to~$u'$ and to~$p'_3$. In particular,~$P'_1$ must 
					be adjacent to~$u'$ to make sure that the shortest path 
					between~$P'_1$ and~$P'_2$ is of length~$9$. Then we have
					\[d^{N'}(P'_1,u') - d^{N'}(P'_3,u') = 4-6= -2.\]
					But in~$\bar{B}$, we have that~$d^N(P_1,p_1) = d^N(P_3,p_1) 
					= 4$. This implies that
					\begin{align*}
					d^N(P_1,u) - d^N(P_3,u) &\ge d^N(P_1,p_1) + d^N(p_1,u) 
					- d^N(P_3,p_1) - d^N(p_1,u)\\
					&= 0,
					\end{align*}
					which contradicts Claim~\ref{cla:DistToCE}.
					\item \textbf{$\bm{P'_2}$ is adjacent to a pendant 
						level-$\bm{2}$ blob~$\bm{P'_4}$:}
					Observe that~$p'_4$ must be placed on~$s'_2$ such 
					that~$d^{N'}(p'_1,p'_4) = 2$. Then we have that
					\[d^{N'}(P'_4,u') - d(P'_1,u') = 7 - 4 = 3.\]
					In~$\bar{B}$,~$P_2$ must be adjacent to both~$P_1$ 
					and~$P_4$ Then~$d^N(P_1,p_1) = 4$, whereas~$d^N(P_4,p_1) = 
					5$. We have that
					\[d^N(P_4,u) - d^N(P_1,u) \le 1,\]
					which contradicts Claim~\ref{cla:DistToCE}.
				\end{enumerate}
				\item \textbf{$\bm{u'}$ is in $\bm{s'_3}$:} Then to ensure that 
				the leaves of~$P'_1$ and the leaves of~$P'_2$ are shortest 
				distance-9 apart, we require~$P'_1$ and~$P'_2$ to be adjacent 
				to level-$2$ blobs~$P'_3$ and~$P'_4$, respectively. But then
				\[d^{N'}(P'_3,P'_4) \ge 10.\]
				In~$\bar{B}$, the pendant blobs~$P_3,P_1$; $P_1,P_2$; 
				and~$P_2,P_4$ are adjacent. So we have
				\[d^N(P_3,P_4) = 9,\]
				which contradicts Claim~\ref{cla:DistToCE}.
			\end{enumerate}
		\end{enumerate}
		This covers all cases for whenever two level-$2$ blobs are 
		adjacent. In all cases, we were able to find a contradiction with 
		regards to the inter-taxa distances or to Claim~\ref{cla:DistToCE}.
	\end{proof}
	
		\begin{figure}
		\centering
				\begin{subfigure}[b]{.49\textwidth}
			\centering
			\begin{tikzpicture}
			[every node/.style={draw, circle, fill, inner sep=0pt, minimum 
				size=2mm}]
			\tikzset{edge/.style={very thick}}
			
			\node[draw=none, fill=none] (v) at (0,1) {};
			\draw[very thick, black] (0,0) circle (0.6) {};
			\node[] (north) at (0,0.6) {};
			\node[] (west) at (180:0.6) {}
			node[draw=none, fill=none, left=8mm] {\large $B'$};
			\node[] (east) at (0:0.6) {};
			\node[] (u1) at (30:0.6) {};
			\node[] (u3) at (60:0.6) {};
			\node[] (u2) at (288:0.6) {};
			\node[] (u4) at (135:0.6) {};
			
			\node[fill=none] (P1) at (1, 1) {}
			node[draw=none, fill=none, above=1mm of P1] {\large $P'_1$};
			\node[] (P2) at (({0.6*cos(288)}, -1) {}
			node[draw=none, fill=none, below=1mm of P2] {\large $l$};
			\node[] (P3) at (0.33, 1) {}
			node[draw=none, fill=none, above=1mm of P3] {\large $P'_3$};
			\node[fill=none] (P4) at (-1, 1) {}
			node[draw=none, fill=none, above=1mm of P4] {\large $P'_2$};
			
			\draw[edge]
			(v) -- (north)
			(west) -- (east)
			(u3) -- (P3)
			(u4) -- (P4);
			
			\draw[edge, bend right]
			(u1) edge (P1);
			
			\draw[edge]
			(u2) edge (P2);
			
			\begin{scope}[xshift=4cm,yshift=0cm]
			\draw[very thick, black] (0,0) circle (0.6) {}
			node[draw=none, fill=none, above left = 4mm and 4mm] {\large 
				$B$};
			\node[] (west) at (180:0.6) {};
			\node[] (east) at (0:0.6) {};
			\node[] (u1) at (270:0.6) {};
			\node[] (u3) at (225:0.6) {};
			\node[] (u2) at (315:0.6) {};
			
			\node[] (P1) at (0, -1) {}
			node[draw=none, fill=none, below=1mm of P1] {\large $P_1$};
			\node[] (P2) at (1, -1) {}
			node[draw=none, fill=none, below=1mm of P2] {\large $P_2$};
			\node[fill=none] (P3) at (-1, -1) {}
			node[draw=none, fill=none, below=1mm of P3] {\large $P_3$};
			
			\draw[edge]
			(west) -- (east)
			(u1) -- (P1);
			
			\draw[edge, bend right]
			(u3) edge (P3);
			
			\draw[edge, bend left]
			(u2) edge (P2);
			\end{scope}
			
			\end{tikzpicture}
			\caption{Case 1. (a)}
		\end{subfigure}\hfill
		\begin{subfigure}[b]{.49\textwidth}
			\centering
			\begin{tikzpicture}
			[every node/.style={draw, circle, fill, inner sep=0pt, minimum 
				size=2mm}]
			\tikzset{edge/.style={very thick}}
			
			\node[draw=none, fill=none] (v) at (0,1) {};
			\draw[very thick, black] (0,0) circle (0.6) {}
			node[draw=none, fill=none, above left = 4mm and 4mm] {\large 
				$B'$};
			\node[] (north) at (90:0.6) {};
			\node[] (west) at (180:0.6) {};
			\node[] (east) at (0:0.6) {};
			\node[] (u1) at (216:0.6) {};
			\node[] (u3) at (252:0.6) {};
			\node[] (u4) at (288:0.6) {};
			\node[] (u2) at (324:0.6) {};
			
			\node[fill=none] (P1) at (-1, -1) {}
			node[draw=none, fill=none, below=1mm of P1] {\large $P'_1$};
			\node[fill=none] (P2) at (1, -1) {}
			node[draw=none, fill=none, below=1mm of P2] {\large $P'_2$};
			\node[] (P3) at (-0.33, -1) {}
			node[draw=none, fill=none, below=1mm of P3] {\large $P'_3$};
			\node[] (P4) at (0.33, -1) {}
			node[draw=none, fill=none, below=1mm of P4] {\large $P'_4$};
			
			\draw[edge]
			(v) -- (north)
			(west) -- (east)
			(u3) -- (P3)
			(u4) -- (P4);
			
			\draw[edge, bend right]
			(u1) edge (P1);
			
			\draw[edge, bend left]
			(u2) edge (P2);
			
			\begin{scope}[xshift=4cm,yshift=0cm]
			
			\node[draw=none, fill=none] (v) at (0,1) {};
			\draw[very thick, black] (0,0) circle (0.6) {}
			node[draw=none, fill=none, above left = 4mm and 4mm] {\large 
				$B$};
			\node[] (north) at (0,0.6) {};
			\node[] (west) at (180:0.6) {};
			\node[] (east) at (0:0.6) {};
			\node[] (u1) at (216:0.6) {};
			\node[] (u3) at (252:0.6) {};
			\node[] (u4) at (288:0.6) {};
			\node[] (u2) at (324:0.6) {};
			
			\node[fill=none] (P1) at (-1, -1) {}
			node[draw=none, fill=none, below=1mm of P1] {\large $P_3$};
			\node[fill=none] (P2) at (1, -1) {}
			node[draw=none, fill=none, below=1mm of P2] {\large $P_4$};
			\node[] (P3) at (-0.33, -1) {}
			node[draw=none, fill=none, below=1mm of P3] {\large $P_1$};
			\node[] (P4) at (0.33, -1) {}
			node[draw=none, fill=none, below=1mm of P4] {\large $P_2$};
			
			\draw[edge]
			(v) -- (north)
			(west) -- (east)
			(u3) -- (P3)
			(u4) -- (P4);
			
			\draw[edge, bend right]
			(u1) edge (P1);
			
			\draw[edge, bend left]
			(u2) edge (P2);
			\end{scope}
			
			\end{tikzpicture}
			\caption{Case 1. (b)}
		\end{subfigure}

		\begin{subfigure}[b]{.49\textwidth}
			\centering
			\begin{tikzpicture}
			[every node/.style={draw, circle, fill, inner sep=0pt, minimum 
				size=2mm}]
			\tikzset{edge/.style={very thick}}
			
			\node[draw=none, fill=none] (v) at (0,1) {};
			\draw[very thick, black] (0,0) circle (0.6) {};
			\node[] (north) at (0,0.6) {};
			\node[] (west) at (180:0.6) {}
			node[draw=none, fill=none, left=8mm] {\large $B'$};
			\node[] (east) at (0:0.6) {};
			\node[] (u1) at (30:0.6) {};
			\node[] (u3) at (60:0.6) {};
			\node[] (u2) at (-90:0.6) {};

			\node[] (P1) at (1, 1) {}
			node[draw=none, fill=none, above=1mm of P1] {\large $P'_3$};
			\node[fill=none] (P2) at ({0.6*cos(-90)}, -1) {}
			node[draw=none, fill=none, below=1mm of P2] {\large $P'_2$};
			\node[fill=none] (P3) at (0.33, 1) {}
			node[draw=none, fill=none, above=1mm of P3] {\large $P'_1$};
			
			\draw[edge]
			(v) -- (north)
			(west) -- (east)
			(u3) -- (P3);

			\draw[edge, bend right]
			(u1) edge (P1);
			
			\draw[edge]
			(u2) edge (P2);
			
			\begin{scope}[xshift=4cm,yshift=0cm]
			\draw[very thick, black] (0,0) circle (0.6) {}
			node[draw=none, fill=none, above left = 4mm and 4mm] {\large 
				$B$};
			\node[] (west) at (180:0.6) {};
			\node[] (east) at (0:0.6) {};
			\node[] (u1) at (270:0.6) {};
			\node[] (u3) at (225:0.6) {};
			\node[] (u2) at (315:0.6) {};
			
			\node[] (P1) at (0, -1) {}
			node[draw=none, fill=none, below=1mm of P1] {\large $P_1$};
			\node[] (P2) at (1, -1) {}
			node[draw=none, fill=none, below=1mm of P2] {\large $P_2$};
			\node[fill=none] (P3) at (-1, -1) {}
			node[draw=none, fill=none, below=1mm of P3] {\large $P_3$};
			
			\draw[edge]
			(west) -- (east)
			(u1) -- (P1);
			
			\draw[edge, bend right]
			(u3) edge (P3);
			
			\draw[edge, bend left]
			(u2) edge (P2);
			\end{scope}
			
			\end{tikzpicture}
			\caption{Case 2. (a) i.}
		\end{subfigure}\hfill
		\begin{subfigure}[b]{.49\textwidth}
			\centering
			\begin{tikzpicture}
			[every node/.style={draw, circle, fill, inner sep=0pt, minimum 
				size=2mm}]
			\tikzset{edge/.style={very thick}}
			
			\node[draw=none, fill=none] (v) at (0,1) {};
			\draw[very thick, black] (0,0) circle (0.6) {};
			\node[] (north) at (0,0.6) {};
			\node[] (west) at (180:0.6) {}
			node[draw=none, fill=none, left=8mm] {\large $B'$};
			\node[] (east) at (0:0.6) {};
			\node[] (u1) at (45:0.6) {};
			\node[] (u3) at (-60:0.6) {};
			\node[] (u2) at (-120:0.6) {};
			
			\node[fill=none] (P1) at ({0.6*cos(45)}, 1) {}
			node[draw=none, fill=none, above=1mm of P1] {\large $P'_1$};
			\node[fill=none] (P4) at (0.33, -1) {}
			node[draw=none, fill=none, below=1mm of P4] {\large $P'_4$};
			\node[] (P2) at (-0.33, -1) {}
			node[draw=none, fill=none, below=1mm of P2] {\large $P'_2$};
			
			\draw[edge]
			(v) -- (north)
			(west) -- (east)
			(u1) -- (P1)
			(u3) -- (P4)
			(u2) -- (P2);
			
			\begin{scope}[xshift=4cm,yshift=0cm]
			\draw[very thick, black] (0,0) circle (0.6) {}
			node[draw=none, fill=none, above left = 4mm and 4mm] {\large 
				$B$};
			\node[] (west) at (180:0.6) {};
			\node[] (east) at (0:0.6) {};
			\node[] (u1) at (270:0.6) {};
			\node[] (u3) at (225:0.6) {};
			\node[] (u2) at (315:0.6) {};
			
			\node[] (P1) at (0, -1) {}
			node[draw=none, fill=none, below=1mm of P1] {\large $P_2$};
			\node[fill=none] (P2) at (1, -1) {}
			node[draw=none, fill=none, below=1mm of P2] {\large $P_4$};
			\node[] (P3) at (-1, -1) {}
			node[draw=none, fill=none, below=1mm of P3] {\large $P_1$};
			
			\draw[edge]
			(west) -- (east)
			(u1) -- (P1);
			
			\draw[edge, bend right]
			(u3) edge (P3);
			
			\draw[edge, bend left]
			(u2) edge (P2);
			\end{scope}
			
			\end{tikzpicture}
			\caption{Case 2. (a) ii.}
		\end{subfigure}
		
		\begin{subfigure}[b]{0.49\textwidth}
			\centering
			\begin{tikzpicture}
			[every node/.style={draw, circle, fill, inner sep=0pt, minimum 
				size=2mm}]
			\tikzset{edge/.style={very thick}}
			
			\node[draw=none, fill=none] (v) at (0,0.4) {};
			\draw[very thick, black] (0,0) circle (0.6) {};
			\node[] (north) at (0,0) {};
			\node[] (west) at (180:0.6) {}
			node[draw=none, fill=none, left=8mm] {\large $B'$};
			\node[] (east) at (0:0.6) {};
			\node[] (u1) at (60:0.6) {};
			\node[] (u3) at (120:0.6) {};
			\node[] (u4) at (-60:0.6) {};
			\node[] (u2) at (-120:0.6) {};
			
			\node[fill=none] (P1) at (0.33, 1) {}
			node[draw=none, fill=none, above=1mm of P1] {\large $P'_1$};
			\node[] (P3) at (-0.33, 1) {}
			node[draw=none, fill=none, above=1mm of P3] {\large $P'_3$};
			\node[] (P4) at (0.33, -1) {}
			node[draw=none, fill=none, below=1mm of P4] {\large $P'_4$};
			\node[fill=none] (P2) at (-0.33, -1) {}
			node[draw=none, fill=none, below=1mm of P2] {\large $P'_2$};
			
			\draw[edge]
			(v) -- (north)
			(west) -- (east)
			(u1) -- (P1)
			(u4) -- (P4)
			(u2) -- (P2)
			(u3) -- (P3);
			
			\begin{scope}[xshift=4cm,yshift=0cm]
			
			\node[draw=none, fill=none] (v) at (0,1) {};
			\draw[very thick, black] (0,0) circle (0.6) {}
			node[draw=none, fill=none, above left = 4mm and 4mm] {\large 
				$B$};
			\node[] (north) at (0,0.6) {};
			\node[] (west) at (180:0.6) {};
			\node[] (east) at (0:0.6) {};
			\node[] (u1) at (216:0.6) {};
			\node[] (u3) at (252:0.6) {};
			\node[] (u4) at (288:0.6) {};
			\node[] (u2) at (324:0.6) {};
			
			\node[fill=none] (P1) at (-1, -1) {}
			node[draw=none, fill=none, below=1mm of P1] {\large $P_3$};
			\node[fill=none] (P2) at (1, -1) {}
			node[draw=none, fill=none, below=1mm of P2] {\large $P_4$};
			\node[] (P3) at (-0.33, -1) {}
			node[draw=none, fill=none, below=1mm of P3] {\large $P_1$};
			\node[] (P4) at (0.33, -1) {}
			node[draw=none, fill=none, below=1mm of P4] {\large $P_2$};
			
			\draw[edge]
			(v) -- (north)
			(west) -- (east)
			(u3) -- (P3)
			(u4) -- (P4);
			
			\draw[edge, bend right]
			(u1) edge (P1);
			
			\draw[edge, bend left]
			(u2) edge (P2);
			
			\end{scope}
			
			\end{tikzpicture}
			\caption{Case 2. (b)}
		\end{subfigure}
		\caption{The different cases from the proof of Claim~\ref{cla:L2L2Bad}. 
			Pendant blobs are indicated by filled and unfilled leaves with the 
			label~$P_i$ for some~$i$. The filled vertices indicate a pendant 
			level-$2$ blob of the form~$(2,0,0,0)$, whereas unfilled vertices 
			indicate a pendant level-$1$ blob on two leaves.}
		\label{fig:thmclaim2}
	\end{figure}
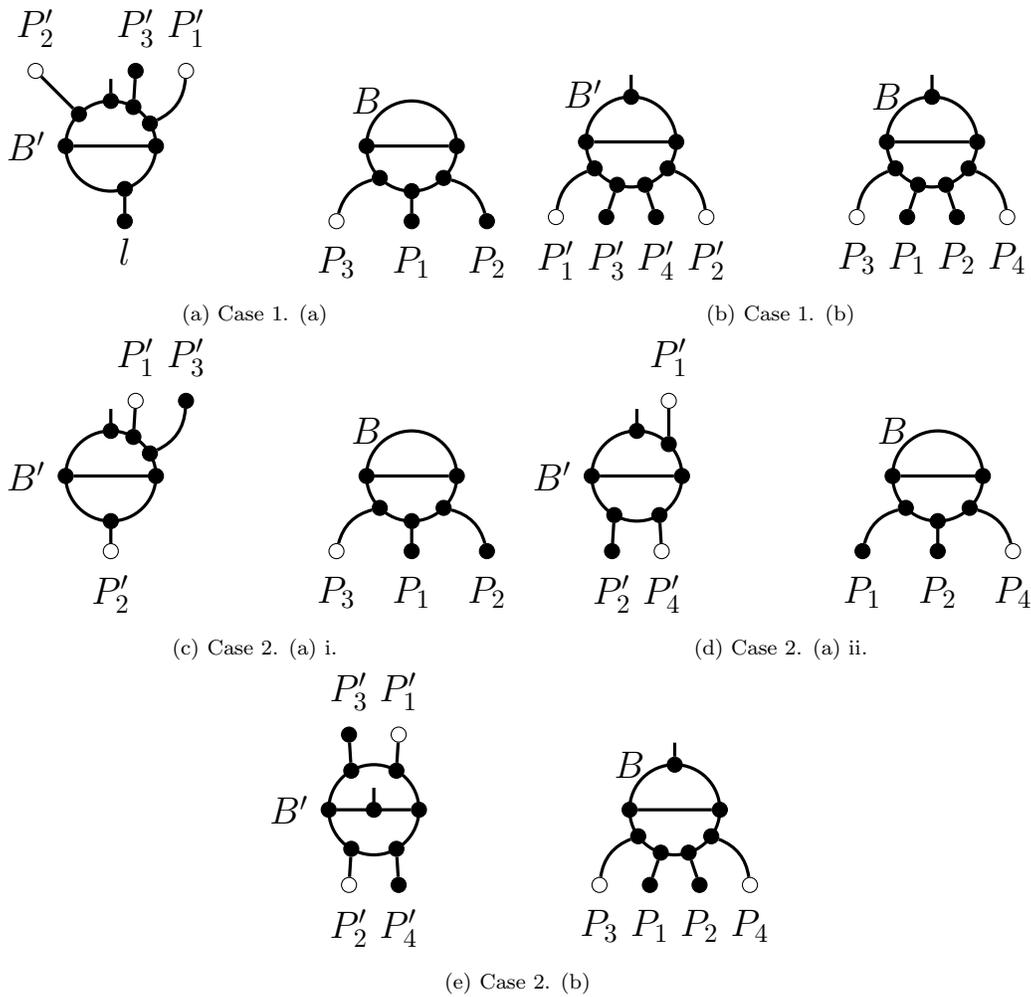
	
	\begin{claim}
		A pendant level-$2$ blob may not be adjacent to both a pendant 
		level-$1$ blob and a leaf simultaneously in~$\bar{B}$ (and 
		in~$\bar{B'}$).
	\end{claim}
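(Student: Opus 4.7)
Assume for contradiction that in $\bar B$, the pendant level-$2$ blob $P_2$ is adjacent to both a pendant level-$1$ blob $P_1$ and a leaf $l$ with neighbour $n_l$. Since $p_2$ has degree $3$ in $B$, with one edge going into $P_2$ via the non-trivial cut-edge, its two in-blob neighbours must be precisely $p_1$ and $n_l$. In particular, $p_2$ cannot be a pole (which would have three in-blob neighbours), so $p_1, p_2, n_l$ are three consecutive vertices on a common main path of $B$. This yields $d^N(p_1, n_l) = 2$, and hence $d^N(P_1, l) = 3 + 2 + 1 = 6$.

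My plan is to first transfer this configuration to $\bar B'$ and then seek a contradiction using Claim~\ref{cla:DistToCE}. By part~$(iii)$ of Claim~\ref{cla:4Statements}, $P'_1$ and $P'_2$ (whose levels are swapped) must also be adjacent in $\bar B'$, so $p'_1 \sim p'_2$. Using $l(P'_1) = 2$ and $l(P'_2) = 1$, the distance equalities $d^{N'}(l, P'_1) = d^{N'}(l, P'_2) = 6$ force $d^{N'}(n_l, p'_1) = 1$ and $d^{N'}(n_l, p'_2) = 2$. Hence $\bar B'$ exhibits the symmetric configuration: the pendant level-$2$ blob $P'_1$ is adjacent to both the pendant level-$1$ blob $P'_2$ and the leaf $l$, with $n_l, p'_1, p'_2$ three consecutive vertices on a common main path of $B'$ (with $p'_1$ in the middle).

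Next, I would apply Claim~\ref{cla:DistToCE} in two different ways to constrain the positions of $u$ and $u'$. Applied to the pair of leaves (a leaf of $P_1$, a leaf of $P_2$), together with $p_1 \sim p_2$ in $B$ and $p'_1 \sim p'_2$ in $B'$, it forces $d(p_1,u) = d(p_2,u) + 1$ and $d(p'_1,u') = d(p'_2,u') - 1$, meaning $u$ lies on the $n_l$-side of $p_2$ on the relevant main path (or reaches it via a pole from that side) while $u'$ lies on the $n_l$-side of $p'_1$. Applying the claim to (a leaf of $P_2$, $l$) then leaves exactly two viable cases: either $(i)$ $d(n_l,u)=d(p_2,u)-1$ and $d(n_l,u')=d(p'_2,u')-2$, so $n_l$ lies on a shortest $u$-to-$p_1$ path and on a shortest $u'$-to-$p'_2$ path; or $(ii)$ $d(n_l,u)=d(p_2,u)+1$ and $d(n_l,u')=d(p'_2,u')$, the ``equidistant'' configuration.

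The main obstacle will be the remaining case analysis on where $u$ and $u'$ actually sit, and what populates the other main paths of $B$ and $B'$. The structural restrictions from Claims~\ref{cla:4Statements} and~\ref{cla:L2L2Bad} force the ``free'' in-blob neighbour of $p_1$ (respectively, of $p'_2$) to be a pole or $u$ (respectively, a pole or $u'$), and restrict strongly what can be on the remaining main paths. I expect each subcase to terminate either in a direct distance mismatch for a well-chosen leaf pair, or in the conclusion that $B$ (or $B'$) together with the cut-edge at $u$ (or $u'$) and the three attached pendant pieces forms an alt-path structure of some binary tree, contradicting our standing assumption that neither $N$ nor $N'$ contains an alt-path structure.
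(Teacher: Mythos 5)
Your setup is the same as the paper's: you transfer the configuration to $\bar{B'}$ (forcing $P'_1\sim P'_2$ and $l$ adjacent to $P'_1$), and your first application of Claim~\ref{cla:DistToCE} to a leaf of $P_1$ and a leaf of $P_2$ is correct — since $d^N(P_1,\cdot)$ and $d^N(P_2,\cdot)$ differ from $d^N(p_1,\cdot)$ and $d^N(p_2,\cdot)$ by $3$ and $4$ respectively (and by $4$ and $3$ in $N'$), the identity plus the bound $|d(p_1,u)-d(p_2,u)|\le 1$ does force $d^N(p_1,u)=d^N(p_2,u)+1$ and $d^{N'}(p'_1,u')=d^{N'}(p'_2,u')-1$.

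However, the proof stops exactly where the work is, and the closing idea is missing. The paper needs neither your second application of Claim~\ref{cla:DistToCE} to the pair $(P_2,l)$ nor any alt-path structures: the contradiction is purely metric and comes in two short steps. First, on the $N'$ side no case analysis on $u'$ is needed at all, because $d^{N'}(P'_1,p'_1)=d^{N'}(P'_2,p'_1)=4$ (both pendant blobs sit at distance exactly $4$ from the single vertex $p'_1$), so the triangle inequality gives $d^{N'}(P'_1,u')\ge d^{N'}(P'_2,u')$ for every position of $u'$. Second, on the $N$ side the earlier claims pin down the neighbourhood of $p_1$: by Claims~\ref{cla:4Statements}, \ref{cla:L2L2Bad} and~\ref{cla:L1LeafBad} the remaining blob-neighbour of $p_1$ is a pole or $u$, and a short argument (any vertex at distance $2$ from $p_1$ other than $n_l$ cannot be a leaf-neighbour or a connection, and two poles adjacent via two main paths would create parallel edges) forces $u$ to be adjacent either to $p_1$ or to that pole. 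In both cases $d^N(P_1,u)-d^N(P_2,u)=-2$, i.e.\ $d^N(p_1,u)=d^N(p_2,u)-1$, which is precisely the negation of the constraint you derived; Claim~\ref{cla:DistToCE} then yields the contradiction immediately. Your derived constraint is therefore the right target, but as written the proposal leaves the decisive refutation of it undone, and the case analysis you anticipate (positions of $u$ and $u'$, contents of the other main paths, possible alt-path structures) is substantially more machinery than is needed.
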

	\begin{proof}
		Suppose that a pendant level-$1$ blob~$P_1$ is adjacent to a pendant 
		level-$2$ blob~$P_2$, and some leaf~$l$ is adjacent to~$P_2$ 
		in~$\bar{B}$. To realize these distances in~$\bar{B'}$, we must 
		have that~$P'_1$ and~$P'_2$, the pendant level-$2$ and the level-$1$ 
		blobs that correspond to~$P_1$ and~$P_2$ must be adjacent, and that~$l$ 
		must be adjacent to~$P'_1$. Since level-$1$ blobs may be adjacent to at 
		most one level-$2$ blob, in both~$B$ and~$B'$,~$P_1$ 
		and~$P'_2$ must be adjacent to a pole or~$u$ or~$u'$. We now split into 
		cases depending on the position of~$u$ in~$\bar{B}$.
		\begin{enumerate}
			\item \textbf{$\bm{u}$ is adjacent to~$\bm{p_1}$:} We have
			\[d^N(P_1,u)-d^N(P_2,u) = 4-6 = -2.\]
			In~$\bar{B'}$, we have~$d^{N'}(P'_1,p'_1) = d^{N'}(P'_2,p'_1) = 4$. 
			It follows that
			\[d^{N'}(P'_1,u') - d^{N'}(P'_2,u') \ge 0,\]
			which contradicts Claim~\ref{cla:DistToCE}.
			\item \textbf{$\bm{u}$ is not adjacent to~$\bm{p_1}$:} Let~$v$ be a 
			vertex in~$\bar{B}$ such that~$d^N(p_1,v) = 2$ and~$v$ is not the 
			neighbor of~$l$. We claim that~$v$ is either a pole or equal 
			to~$u$. 
			
			Note that~$p_1$ must be adjacent to a pole since~$P_1$ can be 
			adjacent to at most one pendant level-$2$ blob, and~$P_1$ 
			cannot be adjacent to leaves or other pendant level-$1$ blobs by 
			Claim~\ref{cla:4Statements}. The shortest path from~$p_1$ to~$v$ 
			must contain this pole. Suppose for a contradiction that~$v$ is 
			either a neighbor of a leaf or that~$v$ is a connection of some 
			pendant blob. If~$v$ is a neighbor of a leaf~$l'$, then
			\[d^N(P_1,l) = 6,\]
			meaning that~$P'_1$ must be adjacent to~$l'$ in~$\bar{B'}$. But 
			this is impossible since~$P'_1$ is adjacent to~$P'_2$ and~$l$. 
			Secondly, if~$v$ is a connection of a pendant level-$1$ blob~$P_3$, 
			then
			\[d^N(P_1,P_3) = 8,\]
			but this is impossible since two pendant level-$1$ blobs must have 
			shortest distance at least 10 by Claim~\ref{cla:L2L2Bad}. Finally, 
			if~$v$ is a connection of a pendant level-$2$ blob~$P_4$, then
			\[d^N(P_1,P_4) = 9,\]
			which means that the corresponding pendant level-$1$ blob must be 
			adjacent either to~$P'_2$ or~$l$ in~$\bar{B'}$. But both of these 
			are forbidden by Claim~\ref{cla:4Statements}. Therefore~$v$ must be 
			a pole or~$u$.
			
			But this means that if~$u$ was not placed in the main path 
			of~$\bar{B}$ that did not contain~$p_1$, then~$\bar{B}$ would have 
			parallel edges. So~$u$ must be contained in one of these two main 
			sides. This means that
			\[d^N(P_1,u) - d^N(P_2,u) = 5- 7 = -2.\]
			In~$B'$, as in the previous case, we have that
			\[d^{N'}(P'_1,u') - d^{N'}(P'_2,u') \ge 0,\]
			which clearly contradicts Claim~\ref{cla:DistToCE}
		\end{enumerate}
		These cover all possibilities for a pendant level-$2$ blob to be 
		adjacent to a pendant level-$1$ blob and to a leaf. In all cases, we 
		reach a contradiction.
	\end{proof}

	\begin{claim}
		$\bar{B}$ (and~$\bar{B'}$) contain at most one pendant level-$1$ blob.
	\end{claim}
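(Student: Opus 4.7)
The plan is to argue by contradiction: suppose $\bar{B}$ contains two distinct pendant level-$1$ blobs $P_1,P_2$. By symmetry between $N$ and $N'$ the claim for $\bar{B'}$ will follow from the argument for $\bar{B}$, so I focus on $\bar{B}$. The corresponding pendant blobs $P'_1,P'_2$ in $\bar{B'}$ are both pendant level-$2$ blobs, so Claim~\ref{cla:L2L2Bad} applied to $\bar{B'}$ forces $d^{N'}(p'_1,p'_2)\ge 2$. Using the identities $d^N(P_i,P_j)=3+d^N(p_i,p_j)+3$ (since $l(P_1)=l(P_2)=1$) and $d^{N'}(P'_i,P'_j)=4+d^{N'}(p'_i,p'_j)+4$ (since $l(P'_1)=l(P'_2)=2$) together with equality of the two shortest distance matrices, I would obtain
\[
d^N(p_1,p_2)=2+d^{N'}(p'_1,p'_2)\ge 4.
\]

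Next I would catalogue the possible neighbours of $p_1$ and $p_2$ inside the level-$2$ blob $B$. Each of $p_1,p_2$ has exactly two neighbours in $B$, and Claim~\ref{cla:4Statements}(i)-(ii) rules out neighbours that are either neighbours of a leaf or connections of another pendant level-$1$ blob; Claim~\ref{cla:4Statements}(iv) allows at most one neighbour of $p_i$ to be the connection of a pendant level-$2$ blob; and Claim~\ref{cla:L1LeafBad} further restricts which pendant level-$2$ blobs may play this role. Consequently each neighbour of $p_i$ in $B$ lies in the set $\{Q_1,Q_2,u\}\cup\{p_3,p_4\}$, where $Q_1,Q_2$ are the poles of $B$ and $p_3,p_4$ are the connections of the (at most) pendant level-$2$ blobs $P_3,P_4$ adjacent to $P_1,P_2$ respectively (with $P_3\neq P_4$ by Claim~\ref{cla:L2L2Bad}).

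I would then split into three cases according to how many of $P_1,P_2$ are adjacent to a pendant level-$2$ blob. In the easiest case, where neither is, all four neighbours of $p_1,p_2$ lie in $\{Q_1,Q_2,u\}$. A short inspection of the three main paths of a level-$2$ blob (covering the subcases neighbour-sets $\{Q_1,Q_2\}$, $\{Q_i,u\}$ for $p_1$ and for $p_2$, while remembering that $u$ has only two neighbours inside $B$) shows that in every configuration $p_1$ and $p_2$ are joined by a path of length at most $3$ through a common pole or through $u$, contradicting $d^N(p_1,p_2)\ge 4$. In the remaining two cases, exactly one, respectively both, of $P_1,P_2$ are adjacent to a pendant level-$2$ blob. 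Here I would follow the template already used in the proofs of Claims~\ref{cla:L2L2Bad} and~\ref{cla:L1LeafBad}: write down $d^N(P_i,u)$ and $d^{N'}(P'_i,u')$ using $d^N(P_i,u)=3+d^N(p_i,u)$ and $d^{N'}(P'_i,u')=4+d^{N'}(p'_i,u')$, read off the possible values of $d^N(p_i,u)$ and $d^{N'}(p'_i,u')$ from the restricted local structure around each $p_i,p'_i$ in $\bar{B},\bar{B'}$, and derive a numerical contradiction with Claim~\ref{cla:DistToCE}. Observation~\ref{obs:689} would serve as the key rigidity tool: any vertex at shortest distance $2$ from $p_1$ in $\bar{B}$ must be a pole of $B$ or $u$, which prevents "hidden" leaves or extra pendant blobs from repairing the distances.

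The main obstacle is the third subcase, where both $P_1,P_2$ are adjacent to distinct pendant level-$2$ blobs $P_3,P_4$. There one has to track five connections $p_1,p_2,p_3,p_4,u$ inside $B$ simultaneously, mirror them to $p'_1,p'_2,p'_3,p'_4,u'$ in $\bar{B'}$, and exploit the fact that then $P'_3,P'_4$ are themselves pendant level-$1$ blobs in $\bar{B'}$, so the same distance identity
\[
d^N(P_3,P_4)=8+d^N(p_3,p_4)=d^{N'}(P'_3,P'_4)=6+d^{N'}(p'_3,p'_4)
\]
applies to $P_3,P_4$ with the roles of $1$ and $2$ swapped; combining this with the corresponding equation for $P_1,P_2$ and with Claim~\ref{cla:DistToCE} applied to the pairs $(P_1,P_3)$ and $(P_2,P_4)$ should squeeze the configuration into a pattern already handled in Claim~\ref{cla:L2L2Bad} and thereby contradict the standing hypothesis that $N,N'$ contain no alt-path structures and realize the same shortest distance matrix.
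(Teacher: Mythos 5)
Your setup is sound and matches the paper's: the bound $d^N(p_1,p_2)\ge 4$ is derived exactly as the paper derives $d^N(P_1,P_2)\ge 10$ (via Claim~\ref{cla:L2L2Bad} applied to the corresponding level-$2$ blobs in $\bar{B'}$), and your catalogue of admissible neighbours of $p_1,p_2$ is correct, so the subcase in which neither $P_1$ nor $P_2$ is adjacent to a pendant level-$2$ blob does collapse by pigeonhole. However, the substantive part of the proof is exactly the part you leave as ``should work,'' and it does not reduce to the tools you name. Consider the configuration in which the main path containing $p_1$ and $p_2$ is precisely $p_1p_3up_4p_2$ (both $P_1,P_2$ adjacent to distinct pendant level-$2$ blobs whose connections are adjacent to $u$). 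This satisfies $d^N(p_1,p_2)=4$, all your local neighbour constraints, and every distance identity you write down; in particular your hope of ``squeezing it into a pattern already handled in Claim~\ref{cla:L2L2Bad}'' fails because no two pendant level-$2$ blobs are adjacent anywhere in this configuration, and Claim~\ref{cla:DistToCE} applied to the pairs $(P_1,P_3)$, $(P_2,P_4)$ gives no contradiction either (the differences $d^N(P_i,u)-d^{N'}(P'_i,u')$ can be made equal). The paper kills this case by a genuinely new argument: it shows the other two main paths of $\bar{B}$ cannot be empty (else $d^N(P_1,P_2)=9$), that the distance-$2$ vertices on them must be connections of further pendant level-$2$ blobs, and that this forces $\bar{B'}$ to contain four pendant level-$1$ blobs two of which sit on distinct poles-to-pole main paths and are therefore at distance at most $9$ --- contradicting the very bound you started from. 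Nothing in your proposal supplies this step.

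A secondary issue: you invoke Observation~\ref{obs:689} as a blanket rigidity tool (``any vertex at distance $2$ from $p_1$ must be a pole or $u$''), but that observation is proved only under the hypothesis that $\bar{B'}$ contains the alternating path $p'_2p'_1p'_3p'_4$, which is not available here. The analogous exclusions can be re-derived inline (as the paper does), but each of them leans on which pendant blobs $P'_1$ is already adjacent to, so the rigidity is configuration-dependent rather than free. In short: the skeleton is reasonable and partially overlaps the paper's, but the case analysis is organised differently (by the number of adjacent pendant level-$2$ blobs rather than by whether $p_1,p_2$ share a main path), and the decisive configurations are not actually refuted.
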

	\begin{proof}
		Suppose for a contradiction that~$\bar{B}$ contained two pendant 
		level-$1$ blobs~$P_1$ and~$P_2$. We know that they must be shortest 
		distance at least~$10$ apart. Since~$d^N(P_1,p_1) = d^N(P_2,p_2) = 3$, 
		we require that~$d^N(p_1,p_2) \ge 4$. 
		\begin{enumerate}
			\item \textbf{$\bm{p_1}$ and~$\bm{p_2}$ are contained in the same 
				main path~$s$ of~$\bm{\bar{B}}$:} Consider the path from~$p_1$ 
			to~$p_2$ that contains only the vertices of the main path~$s$. 
			Since we require~$d^N(p_1,p_2)\ge 4$, we need at least three 
			vertices in this path excluding~$p_1$ and~$p_2$. By 
			Claims~\ref{cla:4Statements} and~\ref{cla:L1LeafBad}, these three 
			vertices must be two connections~$p_3,p_4$ of pendant level-$2$ 
			blobs and the vertex~$u$. In particular,~$p_1$ and~$p_2$ must be 
			adjacent to~$p_3$ and~$p_4$, and~$p_3$ and~$p_4$ must be adjacent 
			to~$u$. It follows from Claim~\ref{cla:4Statements} that~$s$ 
			contains only these five vertices.
			
			Now consider the two main paths~$s_1$ and~$s_2$ of~$B$ that is 
			not~$s$. If these main paths are both empty, then~$d^N(P_1,P_2) = 
			9$, which contradicts the fact that~$d^N(P_1,P_2) \ge 10$. So they 
			must both contain vertices. Let~$v_i$ denote the vertex in~$s_i$ 
			such 
			that~$d^N(v_i,p_1) = 2$, for~$i=1,2$. Firstly, if~$v_1$ is a 
			neighbor of some leaf~$l$, then~$d^N(P_1,l) = 6$, meaning that~$l$ 
			must be adjacent to~$P'_1$ in~$N'$. But this would imply 
			that~$P'_1$ is adjacent to a leaf~$l$ and a pendant level-$2$ 
			blob~$P'_3$ in~$\bar{B'}$, which contradicts 
			Claim~\ref{cla:L1LeafBad}. Secondly, if~$v_1$ is a connection of a 
			pendant level-$1$ blob~$P_5$, then~$d^N(P_1,P_5) = 8$. But two 
			pendant level-$1$ blobs must be shortest distance at least~$10$ 
			apart. So~$v_1$ must be a connection of a pendant level-$2$ blob. 
			Similarly,~$v_2$ must be a connection of a pendant level-$2$ blob. 
			But this implies that~$\bar{B'}$ contains four pendant level-$1$ 
			blobs. 
			
			The main path of~$\bar{B'}$ with~$u'$ contains exactly two pendant 
			level-$1$ blobs; the other two main paths of~$\bar{B'}$ contains 
			exactly one pendant level-$1$ blob each. Consider these two latter 
			main paths. By Claims~\ref{cla:4Statements},~\ref{cla:L2L2Bad}, 
			and~\ref{cla:L1LeafBad} these two main paths may contain an 
			additional pendant level-$2$ blob each, but no other leaves. This 
			means that the pendant level-$1$ blobs in these two main paths are 
			shortest distance at most~$9$ to one another, which is a 
			contradiction. Therefore, the vertices~$v_i$ for~$i=1,2$ cannot be 
			neighbors of leaves / connections of pendant blobs, meaning 
			that~$\bar{B}$ contains parallel edges, which is a contradiction.
			
			\item \textbf{$\bm{p_1}$ and~$\bm{p_2}$ are not contained in the 
				same main path of~$\bm{\bar{B}}$:} Let~$s_1$ and~$s_2$ denote 
				the 
			main paths of~$\bar{B}$ that contain~$p_1$ and~$p_2$, respectively. 
			If~$s_1$ and~$s_2$ do not contain the vertex~$u$, 
			then~$d^N(P_1,P_2) \le 9$ and we are done. 
			
			So suppose without loss of generality that~$s_1$ contains the 
			vertex~$u$. Since we require~$d^N(p_1,p_2) \ge 4$, considering the 
			path between~$P_1$ and~$P_2$ that uses only the edges from~$s_1$ 
			and~$s_2$, without using the vertex~$u$, we see that~$P_1$ 
			and~$P_2$ must be adjacent to pendant level-$2$ blobs~$P_3,P_4$, 
			respectively. In particular, this path contains the 
			subpath~$p_1p_3vp_4p_2$ for some pole~$v$ of~$\bar{B}$. 
			Therefore~$p_1$ must be adjacent to~$u$ in~$\bar{B}$. Then,
			\[d^N(P_1,u) - d^N(P_3,u) = 4-6 = -2.\]
			In~$\bar{B'}$, we have~$d^{N'}(P'_1,p'_1) = d^{N'}(P'_3,p'_1) = 4$. 
			It follows that
			\[d^{N'}(P'_1,u') - d^{N'}(P'_3,u') \ge 0,\]
			which is a contradiction. 
		\end{enumerate}
		This covers all cases for when~$\bar{B}$ contains more than one pendant 
		level-$1$ blob, which all result in a contradiction. Therefore the 
		claim follows.
	\end{proof}
	
\end{appendices}




\begin{thebibliography}{}
	
	\bibitem[\protect\astroncite{Bapteste et~al.}{2013}]{bapteste2013networks}
	Bapteste, E., van Iersel, L., Janke, A., Kelchner, S., Kelk, S., McInerney,
	J.~O., Morrison, D.~A., Nakhleh, L., Steel, M., Stougie, L., et~al. (2013).
	\newblock Networks: expanding evolutionary thinking.
	\newblock {\em Trends in Genetics}, 29(8):439--441.
	
	\bibitem[\protect\astroncite{Bordewich 
	et~al.}{2018a}]{bordewich2018recovering}
	Bordewich, M., Huber, K.~T., Moulton, V., and Semple, C. (2018a).
	\newblock Recovering normal networks from shortest inter-taxa distance
	information.
	\newblock {\em Journal of mathematical biology}, pages 1--24.
	
	\bibitem[\protect\astroncite{Bordewich and
		Semple}{2016}]{bordewich2016determining}
	Bordewich, M. and Semple, C. (2016).
	\newblock Determining phylogenetic networks from inter-taxa distances.
	\newblock {\em Journal of mathematical biology}, 73(2):283--303.
	
	\bibitem[\protect\astroncite{Bordewich
		et~al.}{2018b}]{bordewich2018constructing}
	Bordewich, M., Semple, C., and Tokac, N. (2018b).
	\newblock Constructing tree-child networks from distance matrices.
	\newblock {\em Algorithmica}, 80(8):2240--2259.
	
	\bibitem[\protect\astroncite{Bordewich and
		Tokac}{2016}]{bordewich2016algorithm}
	Bordewich, M. and Tokac, N. (2016).
	\newblock An algorithm for reconstructing ultrametric tree-child networks 
	from
	inter-taxa distances.
	\newblock {\em Discrete applied mathematics}, 213:47--59.
	
	\bibitem[\protect\astroncite{Bryant et~al.}{2007}]{bryant2007consistency}
	Bryant, D., Moulton, V., and Spillner, A. (2007).
	\newblock Consistency of the neighbor-net algorithm.
	\newblock {\em Algorithms for Molecular Biology}, 2(1):8.

    \bibitem[\protect\astroncite{Buneman}{1971}]{buneman1971recovery}
    Buneman, P. (1971).
    \newblock The recovery of trees from measures of dissimilarity.
    \newblock {\em Mathematics in the archaeological and historical sciences}, pages 387--395.
	
	\bibitem[\protect\astroncite{Cunningham}{1978}]{cunningham1978free}
	Cunningham, J.~P. (1978).
	\newblock Free trees and bidirectional trees as representations of
	psychological distance.
	\newblock {\em Journal of mathematical psychology}, 17(2):165--188.
	
	\bibitem[\protect\astroncite{Dewdney}{1979}]{dewdney1979diagonal}
	Dewdney, A. (1979).
	\newblock Diagonal tree codes.
	\newblock {\em Information and Control}, 40(2):234--239.
	
	\bibitem[\protect\astroncite{Forcey and 
	Scalzo}{2020}]{forcey2020phylogenetic}
	Forcey, S. and Scalzo, D. (2020).
	\newblock Phylogenetic networks as circuits with resistance distance.
	\newblock {\em Frontiers in Genetics}, 11.
	
	\bibitem[\protect\astroncite{Hakimi and Yau}{1965}]{hakimi1965distance}
	Hakimi, S.~L. and Yau, S.~S. (1965).
	\newblock Distance matrix of a graph and its realizability.
	\newblock {\em Quarterly of applied mathematics}, 22(4):305--317.
	
	\bibitem[\protect\astroncite{Huson et~al.}{2010}]{huson2010phylogenetic}
	Huson, D.~H., Rupp, R., and Scornavacca, C. (2010).
	\newblock {\em Phylogenetic networks: concepts, algorithms and 
	applications}.
	\newblock Cambridge University Press.
	
	\bibitem[\protect\astroncite{van Iersel
	et~al.}{2020}]{van2020reconstructibility}
	van Iersel, L., Moulton, V., and Murakami, Y. (2020).
	\newblock Reconstructibility of unrooted level-$k$ phylogenetic networks 
	from
	distances.
	\newblock {\em Advances in Applied Mathematics}, 120:102075.
	
	\bibitem[\protect\astroncite{Jones et~al.}{2019}]{jones2019cutting}
	Jones, M., Gambette, P., van Iersel, L., Janssen, R., Kelk, S., Pardi, F., 
	and
	Scornavacca, C. (2019).
	\newblock Cutting an alignment with ockham's razor.
	\newblock {\em arXiv preprint arXiv:1910.11041}.
	
	\bibitem[\protect\astroncite{Mol}{2020}]{Mol2020}
	Mol, R. (2020).
	\newblock Reconstruction of phylogenetic networks: An algorithm for
	reconstructing level-2 binary networks based on their distances.
	
	\bibitem[\protect\astroncite{Morrison}{2011}]{morrison2011introduction}
	Morrison, D.~A. (2011).
	\newblock {\em An introduction to phylogenetic networks}.
	\newblock RJR productions.
	
	\bibitem[\protect\astroncite{Schvaneveldt
		et~al.}{1989}]{schvaneveldt1989network}
	Schvaneveldt, R.~W., Durso, F.~T., and Dearholt, D.~W. (1989).
	\newblock Network structures in proximity data.
	\newblock In {\em Psychology of learning and motivation}, volume~24, pages
	249--284. Elsevier.
	
	\bibitem[\protect\astroncite{Willson}{2006}]{willson2006unique}
	Willson, S.~J. (2006).
	\newblock Unique reconstruction of tree-like phylogenetic networks from
	distances between leaves.
	\newblock {\em Bulletin of mathematical biology}, 68(4):919--944.
	
\end{thebibliography}





\end{document}